\documentclass[11pt,a4paper]{article}

\usepackage{amsmath}
\usepackage{amsthm}
\usepackage{amssymb}
\usepackage{amsfonts}
\usepackage{mathtools}
\usepackage{mathrsfs}




\usepackage{parskip} 
\usepackage{graphicx}  
\usepackage{dcolumn}  
\usepackage{color}  
\usepackage{bm}  
\usepackage[caption=false]{subfig} 
\usepackage{listings} 
\usepackage[c]{esvect} 
\usepackage[retainorgcmds]{IEEEtrantools}
\usepackage{verbatim} 
\usepackage[section]{placeins}
\usepackage{tikz}
\usetikzlibrary{positioning}
\usetikzlibrary{shapes.misc}
\usetikzlibrary{shapes.arrows}
\usetikzlibrary{patterns}
\usetikzlibrary{braids}

\usepackage[]{color}

\definecolor{green}{rgb}{0,1,0.28}

\definecolor{brown}{rgb}{.52,.23,.0}

\usepackage{url}
\usepackage{hyperref}




\newcommand{\Cf}{\mathrm{Conf}}

\newcommand{\qCf}{\overline{\mathrm{Conf}}}

\newcommand{\RR}{\mathbb{R}}
\newcommand{\NN}{\mathbb{N}}

\newcommand{\DD}{\mathcal{D}}
\renewcommand{\SS}{\mathcal{S}}
\newcommand{\ang}{\mathrm{ang}}
\newcommand{\dist}{\mathrm{dist}}
\newcommand{\perm}{\varsigma}

\bibliographystyle{abbrv}

\theoremstyle{definition} \newtheorem{defn}{Definition}[subsection]
\theoremstyle{plain} \newtheorem{thm}[defn]{Theorem}
\theoremstyle{plain} 
\theoremstyle{definition} 
\theoremstyle{plain} \newtheorem{thmm}{Theorem}
\theoremstyle{plain} \newtheorem{prop}[defn]{Proposition}
\theoremstyle{plain} \newtheorem{fact}[defn]{Fact}
\theoremstyle{plain} \newtheorem{lem}[defn]{Lemma}
\theoremstyle{plain} 
\theoremstyle{definition} 
\theoremstyle{remark} \newtheorem{rem}[defn]{Remark}

\begin{document}

\title{Thick braids and other non-trivial homotopy in configuration spaces of hard discs}
\author{Patrick Ramsey \\ School of Mathematical Sciences \\ Lancaster University}
\date{\today}

\maketitle

\begin{abstract}
We study ordered configuration spaces of n hard discs inside a unit disc, and how the topology changes with the radius r of the hard discs. We describe the full homotopy type of this space for all radii when n = 4 and exhibit nontrivial classes in $\pi_{n-3}$ for all n. We also explore the persistence of these nontrivial classes when the ambient disc is deformed into an ellipse.
\end{abstract}

\tableofcontents

\section{Introduction} \label{chap: intro}

We consider $\Cf_{n,r}(U)$, the ordered configuration space of $n$ non-intersecting open discs of radius $r$ in a (typically convex) subset $U\subset \RR^2$. This is a natural generalisation of $\Cf_n(U)$, the ordered configuration space of $n$ points in $U$. However, whereas the topological properties of $\Cf_n(U)$ are well-understood \cite{cohom, aspher, hans} when $U \simeq \RR^2$ is a homotopy-equivalence (indeed, then $\Cf_n(U) \simeq \Cf_n(\RR^2)$), much less is known about $\Cf_{n,r}(U)$, which is topologically highly dependent both on $r$ and $U$. $\Cf_{n,r}(U)$ has been proposed as a model for understanding phase transitions in physics \cite{top2, top1}, where the centre of the disc represents an atom, and the radius is proportional to the strength of the forces between atoms and inversely proportional to the energy of the atom. For this application, there is a particular focus on the \emph{thermodynamic limit} ($n\to \infty$ with the total disc area $n \pi r^2$ fixed). Some work in the field of topological robotics \cite{farb} investigates motion planning algorithms on $\Cf_{n,r}(U)$.

Here, we use the notation $\DD = (D_1, D_2, \ldots, D_n) \in \Cf_{n,r}(U)$ interchangeably with the underlying configuration $x = (x_1, \ldots, x_n) \in \Cf_n(U)$ (where $D_i = B(x_i,r)$), and we write $\bigcup x \coloneqq \bigcup_{i=1}^n \{x_i\}$, $\bigcup \DD \coloneqq \bigcup_{i=1}^n D_i$. We consider primarily the case where $U$ is the unit disc $D^2$, in which case we drop $U$ from the notation. When studying the homotopy groups in \S\ref{chap: crits} and \S\ref{chap: 4}, we concatenate paths right-to-left.

\subsection{Min-type Morse theory and stress graphs}  \label{sec: crits}

A major mathematical step in the understanding of configuration spaces was the application of `min-type' Morse theory \cite{bald}. Baryshnikov \emph{et al.} developed the tautological function $\tau \colon \Cf_n(U) \to \RR$, given by \[ \tau(x) = \min\left\{ \min\left\{\frac{1}{2}|x_i-x_j| \mid i\neq j\right\}, \min\left\{\mathrm{dist} (x_i, \partial U) \mid 1\le i \le n\right\} \right\}\ , \] to play the role of the Morse function. By construction, $\tau(x)$ is the greatest $r \in \RR$ such that if we replace each $x_i$ with the ball $B(x_i,r)$, the resulting configuration $(B(x_1,r), \ldots, B(x_n,r))$ is a valid configuration of $\Cf_{n,r}(U)$. Thus $\Cf_{n,r}(U) = \tau^{-1}[r, \infty)$.

Analagously to Morse theory, we can find critical points of $\tau$, and show that $\Cf_{n,r}(U) \simeq \Cf_{n,s}(U)$ for any $r<s$ such that $(r,s]$ contains no critical points. We show this by finding a vector field $V$ on $\Cf_n(U)$ such that $\tau$ is strictly increasing along the flowlines of $V$ at every non-critical point; then for any $r<r'<s$, we can reparametrise the flow such that it retracts $\Cf_{n,r'}(U)$ onto $\Cf_{n,s}(U)$. The critical points are configurations $x\in \Cf_n(U)$ such that $\tau$ is not increasing to first order along any $v\in T_x\Cf_n(U)$. In particular, consider the following set of continuous functions $\Cf_n(U) \to \RR$,  \[ N \coloneqq \left\{ x\mapsto \frac{1}{2}|x_i-x_j| \mid i\neq j\right\} \cup \left\{ x\mapsto \mathrm{dist} (x_i, \partial U) \mid 1\le i \le n\right \} . \] Then $\tau(x) = \min \{f(x) \mid f\in N\}$. Writing $N_x = \{ f\in N \mid f(x) = \tau(x) \}$, we see that $x\in \Cf_n(U)$ is critical when there is no local direction along which all the functions in $N_x$ increase simultaneously -- that is, the increasing directions of the functions in $N_x$ can be thought of as being in opposition with each other. This motivates the idea of a balanced stress graph. 

\begin{defn} \label{defn: stress}
    Let $x \in \Cf_{n}(U)$. The \emph{stress graph} of $x$, $N(x)$, consists of:
    \begin{itemize}
        \item A vertex at each $x_i$, called an \emph{interior vertex},
        \item For each function of the form $x\mapsto \frac{1}{2} \left|x_i-x_j \right|$ in $N_x$, a straight \emph{interior edge} joining the vertices at $x_i$ and $x_j$,
        \item For each  function of the form $x\mapsto \dist(x_i, \partial U)$ in $N_x$, a \emph{boundary vertex} at each point on $\partial U$ where this distance is realised, and a straight  \emph{boundary edge} joining each of these boundary vertices to the vertex at $x_i$ (each boundary edge will be orthogonal to $\partial U$).
    \end{itemize}
\end{defn}

\begin{figure}[htb]
\begin{center}
    \begin{tikzpicture}[node distance={15mm}, thick, main/.style={}]
            \draw[color=black] (0,0) circle [radius=2];
            \draw[color=black!50, densely dashed] (-0.4,-0.3) circle [radius=0.5] node[circle, inner sep = 0, minimum size = 6, fill=black] {};
            \draw[color=black!50, densely dashed] (-0.5,1) circle [radius=0.5];
            \draw[color=black!50, densely dashed] (0.37,0.5) circle [radius=0.5];
            \draw[color=black!50, densely dashed] (0,-1.5) circle [radius=0.5];
            \draw[color=black!50] (-0.5,1) node[circle, inner sep = 0, minimum size = 6, fill=black] {} -- (0.37,0.5) node[circle, inner sep = 0, minimum size = 6, fill=black] {};
            \draw[color=black!50] (0,-1.5) node[circle, inner sep = 0, minimum size = 6, fill=black] {} -- (0,-2) node[circle, inner sep = 0, minimum size = 6, fill=black!50] {};
    \end{tikzpicture}
\end{center}
\caption{A configuration $x$ of points in $D^2$ (black) with its stress graph (grey). Each point of $x$ is contained in a circle (grey, dashed), where the radius of these discs is the greatest achievable without any discs overlapping each other or leaving the unit disc. At each point of contact between discs, or between a disc and $\partial D^2$, an edge joins the corresponding vertices.} \label{fig: stress}
\end{figure}

\begin{defn} \label{defn: bal}
    Take $x\in \Cf_n$, and let $E$ be the set of edges of $N(x)$. A \emph{weighting} on $N(x)$ is a map $w\colon E \to [0, \infty)$, where $w_e \coloneqq w(e)$ defines the force with which each edge pushes on its endpoints. More precisely, given an edge $e$ incident to $x_i$, let $v_{i,e}$ denote the other vertex on $e$ --  so $v_{i,e}$ is either $x_j$ for some $j\neq i$, or $\frac{x_i}{|x_i|}$ if $e$ is a boundary edge. Then $e$ exerts a force $w_e(x_i - v_{i,e})$ on $x_i$. If $x_i$ is adjacent to the boundary via an edge $e$, then $e$ exerts a force $w_e\left( \frac{x_i}{|x_i|} - x_i\right)$ on the boundary.
    
    We say that $w$ is \emph{balanced} if the total force $F_i \coloneqq \sum_{e \textrm{ incident to } x_i} w_e(x_i - v_{i,e})$ on each interior vertex $x_i$ is zero. $N(x)$ is a \emph{balanced stress graph} if it has a non-trivial balanced weighting.
\end{defn}

In \cite{bald}, the authors give a second condition in the definition of a balanced weighting: in each connected component $G \subset N(x)$, the total force exerted on the boundary by the boundary edges is zero. However, this condition is a consequence of the condition that $F_i=0$ for all $i$. 

We will typically use the term critical configuration to refer to all configurations with a balanced stress graph, and we will sometimes treat $w$ as part of the information contained in the stress graph. In summary, we obtain:

\begin{fact} \label{fact: crits}
    \cite[Thm. 4.2]{bald} If $[r,s)$ does not contain a critical radius of $\tau$, then $\Cf_{n,r}(U) \simeq \Cf_{n,s}(U)$. The critical radii are characterised by balanced stress graphs.
\end{fact}

Around the same time as this Morse-theoretic approach was being developed, authors began trying to find these critical configurations for small $n$, e.g. \cite{comptop}, using numerical techniques. These approximate the repulsive forces from the stress graph by a smooth function inversely proportional to $r^n$, where $r$ is the distance between two given points, using this to define a flow on the configuration space. The greater the value of $n$, the more like the hard disc case the simulation behaves. The authors aimed to find `wells of attraction' - configurations where the total force on each particle is (nearly) zero, and where nearby configurations naturally flow towards these wells. These can then be confirmed as critical points analytically. However it is not certain that this technique finds all critical points.

The most significant research into the case $U=D^2$ comes from Alpert \cite{alp}, who considers the natural inclusion $\iota \colon \Cf_{n,r} \to \Cf_n$ and its pullback to cohomology $\iota^* \colon H^*(\Cf_{n}) \to H^*(\Cf_{n,r})$. Using geometric arguments, Alpert proves:
\begin{fact}
    The least critical radius of $\Cf_n$ is $\frac{1}{n}$. The stress graph of the corresponding configuration is a diameter of $D^2$ containing all $n$ points -- that is, all the points lie on a common diameter, and there is some non-repeating sequence $i_1, \ldots, i_n$ containing the integers in $[1,n]$ such that $x_{i_1}$ and $x_{i_n}$ are at distance $\frac{1}{n}$ from the boundary, and $|x_{i_k}-x_{i_{k+1}}| = \frac{2}{n}$ for all $1\le k \le n-1$.
\end{fact}
\begin{fact} \label{fact: critnd}
    If $\Cf_n$ has a critical configuration whose stress graph is not just a diameter, then the corresponding radius is greater than $\frac{3}{2n+3}$.
\end{fact}
Alpert then considers the change in $\ker \iota^*$ as $r$ increases. The author shows that $\ker \iota^*$ becomes non-trivial at the first critical radius, $\frac{1}{n}$, and in the case $n=4$, it also increases in size at $r=\frac{1}{3}$ and $r = \sqrt{2}-1$. We show in \S\ref{chap: crits} that this is all the critical radii of $\Cf_4$.

The majority of recent research in the area has focused broadly on finding additional structure in the homology and cohomology of the configuration spaces of hard discs inside an infinite strip \cite{disk1, disk2, disk3, disk4, disk5}, although some work has also looked at topological complexity \cite{disk6}. There also exists some similar research into configuration space of polygons, primarily squares \cite{sq1, sq2, sq3, sq4}.

\subsection{Results}

In \S\ref{chap: crits}, we calculate the critical configurations and radii of $\tau \colon \Cf_n \to \RR$ for small $n$. 
\begin{thmm}
    The full set of pairs $(n,r)$, where $n\in \{1,2,3,4,5\}$ and $r$ is a critical radius of $\Cf_n$, is as shown in Fig. \ref{fig: gcrits}.
\end{thmm}
This is seen in Prop. \ref{prop: crit1}, \ref{prop: crit2}, \ref{prop: crit3}, \ref{prop: crit4}, and \ref{prop: crit5}.

In \S\ref{chap: nts}, we look just beyond the first critical radius for topological features which vanish in $\Cf_n$ when pushed forward through the inclusion $\iota \colon \Cf_{n,r} \to \Cf_n$. Thm. \ref{thm: braid} and \ref{thm: nts} show
\begin{thmm}  \label{thmm: nts}
    Let $n\ge 4$, and $r = \frac{1}{n}+ \varepsilon$ for sufficiently small $\varepsilon>0$. $\pi_{n-3}(\Cf_{n,r})$ contains a non-trivial element which lies in $\ker(\iota_* \colon \pi_{n-3}(\Cf_{n,r}) \to \pi_{n-3}(\Cf_n))$.
\end{thmm}
In particular, for $n=4$ we demonstrate a non-trivial `thick braid' which vanishes as a braid in the (pure) braid group. 

In \S\ref{chap: 4}, we calculate the full homotopy type of $\Cf_{4,r}$ just beyond the first critical radius, culminating in Thm. \ref{thm: graph2}, which says:
\begin{thmm}
    There is a 1-dimensional cell complex $Y$ of Euler characteristic 11 such that $\Cf_{4, r} \simeq Y\times S^1$ for all $r\in \left( \frac{1}{4}, \frac{1}{3} \right]$.
\end{thmm}

Finally, in \S\ref{chap: deform}, we explore deformations of the unit disc. In particular, we consider deformation into an ellipse, which we classify by its eccentricity $e=\sqrt{1-\frac{b^2}{a^2}}$, where $a$ and $b$ are the semi-major and semi-minor radii respectively.  In Thm. \ref{thm: ell} and \ref{thm: ell2}, we show
\begin{thmm}
    Let $n\ge 4$, and $E$ be the ellipse with semi-major radius $a$, semi-minor radius $b$ and eccentricity $e$. Then $\Cf_{n,r}(E)$ contains a non-contractible $(n-3)$-sphere of the same construction as that in Thm. \ref{thmm: nts} for $r=\frac{1}{n}+\varepsilon$, $\varepsilon>0$ sufficiently small, when either: 
    \begin{itemize}
        \item[i.] $b\in \left(\frac{1}{\sqrt{n}}, 1\right]$ and $a=1$, or
        \item[ii.] $b\in \left(\frac{1}{n}, \sqrt{r}\right]$ and $e^2 = \frac{(1-r)^2}{b^2-r^2+(1-r)^2}$.
    \end{itemize}
\end{thmm}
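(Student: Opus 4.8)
The plan is to transplant the buckling sphere of Theorem~\ref{thmm: nts} into $E$ by aligning the chain of discs with the major axis and checking that the fitting constraint collapses onto the one for $D^2$. Recall the mechanism behind Theorem~\ref{thmm: nts}: at $r=\tfrac1n$ the $n$ discs form a straight chain along a diameter, consecutive discs touching and the two end discs touching $\partial D^2$. Writing the chain as $n-1$ bonds of length $2r$ with directions $\theta_1,\dots,\theta_{n-1}$, the condition that it still fits is, to leading order, $2r\,\bigl|\textstyle\sum_j e^{i\theta_j}\bigr|\le 2(1-r)$: the two end-disc centres must lie within distance $1-r$ of the origin. Writing $Q(\theta)=(n-1)\sum_j\theta_j^2-(\sum_j\theta_j)^2$ and expanding for small angles, this reads $Q(\theta)\ge 2(n-1)n^2\varepsilon+o(\varepsilon)$. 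The form $Q$ is positive semidefinite with kernel the overall-rotation direction $(1,\dots,1)$, so after quotienting by rotation the admissible configurations fill the exterior of an ellipsoid in $\RR^{n-2}$, which deformation retracts onto its bounding $S^{n-3}$; this is the class $\gamma$, and it is $\varepsilon>0$ that excludes the straight chain (the centre of the ellipsoid) and thereby makes $\gamma$ non-contractible.

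First I would isolate the one geometric quantity that drives everything: the largest $x$-coordinate $x_c$ attained by the centre of a radius-$r$ disc that is contained in $E$ and centred on the major axis. A direct computation with $\tfrac{x^2}{a^2}+\tfrac{y^2}{b^2}=1$ gives $x_c^2=\tfrac{(a^2-b^2)(b^2-r^2)}{b^2}$ in the ``pointy'' regime $b\le\sqrt r$, where the disc meets $\partial E$ at two symmetric shoulder points, while for $b\ge\sqrt r$ the disc instead nestles into the vertex $(a,0)$ (possible exactly because the radius of curvature $b^2/a$ there is at least $r$), giving $x_c=a-r$. The unifying requirement is $x_c=1-r$, for this makes the extreme edge of the chain reach $x=\pm1$ and renders the end-centre confinement $\bigl|\vec L\bigr|\le 2(1-r)$ identical to the disc case. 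In the vertex regime with $a=1$ this needs only $b\ge\sqrt r$, which for $\varepsilon$ small is implied by $b>\tfrac1{\sqrt n}$, giving case (i). In the shoulder regime $x_c=1-r$ reads $(a^2-b^2)(b^2-r^2)=b^2(1-r)^2$, i.e. $a^2=\tfrac{b^2(b^2+1-2r)}{b^2-r^2}$, which rearranges to $e^2=\tfrac{(1-r)^2}{b^2+1-2r}$ and is admissible whenever $r<b\le\sqrt r$ (a range the stated $b\in(\tfrac1n,\sqrt r]$ provides for $\varepsilon$ small), giving case (ii); the two families meet at $b=\sqrt r$, $a=1$.

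Next I would check the transverse room and assemble the conclusion. Buckling displaces the middle of the chain by $O(\sqrt\varepsilon)$ in the minor direction, and each disc already occupies $\pm r$ there, so the configurations fit once $b>r$, which follows from $b>\tfrac1n$ for $\varepsilon$ small and pins the lower endpoints of both ranges. With the major-axis confinement reduced to $\bigl|\vec L\bigr|\le 2(1-r)$ and the minor-axis room secured, the small-angle expansion reproduces the threshold $Q(\theta)\ge 2(n-1)n^2\varepsilon+o(\varepsilon)$, so the admissible set again retracts onto $S^{n-3}$. A straight chain would need $\bigl|\vec L\bigr|=2(n-1)r>2(1-r)$ (as $nr>1$), so it violates the confinement and fails to fit in $E$; the excluded ellipsoid is thus a genuine hole and $\gamma$ is non-contractible by the argument of Theorem~\ref{thmm: nts}. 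Finally $(i_{n,r})_\ast\gamma=0$ in $\pi_{n-3}(\Cf_n(E))$, since point-centres buckle freely and the sphere contracts there.

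The hard part will be justifying that matching the single number $x_c=1-r$ reproduces the whole threshold form $Q$ and not merely its leading term. When an end disc rides up the boundary offset by $O(\sqrt\varepsilon)$ in $y$, its $x$-coordinate retreats by an amount of order $\varepsilon$ times the curvature of the offset curve — the same order as the $O(\varepsilon)$ chain-shortening that defines the threshold — so the non-circular curvature of $\partial E$ near the chain ends enters $Q$ at leading order. The real work is therefore to expand the $E$-fitting constraint to second order, show the dominant term $(n-1)\sum_j\theta_j^2$ from the (unchanged) bond geometry still beats the boundary-curvature correction, and conclude that $Q$ stays positive-definite on the rotation quotient for $\varepsilon$ small, keeping the admissible region an ellipsoid exterior. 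This is most delicate in case (ii), where each end disc is pinched between two shoulder contacts rather than resting on a single vertex contact, so the offset geometry governing its admissible excursions is genuinely two-sided.
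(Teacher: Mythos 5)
Your identification of the controlling geometric quantity is correct and matches the paper: in the vertex regime the curvature condition $b^2/a\ge r$ gives case (i), and in the shoulder regime your $x_c^2=\frac{(a^2-b^2)(b^2-r^2)}{b^2}$ together with $x_c=1-r$ rearranges to exactly the stated $e^2=\frac{(1-r)^2}{b^2-r^2+(1-r)^2}$ (your $b^2+1-2r$ is the same denominator). The tangency of $B((1-r,0),r)$ to $E$ at the two shoulder points is indeed the pivot of the paper's argument for case (ii). However, there are two genuine gaps. First, your non-contractibility mechanism is not the one that actually works, and as stated it does not suffice. You argue locally in angle space: the admissible chain configurations form the exterior of an ellipsoid $Q(\theta)\ge c\varepsilon$, which retracts onto $S^{n-3}$, and the excluded centre is ``a genuine hole.'' But $\Cf_{n,r}(E)$ contains many configurations that are not chains of touching discs, and nothing in your argument prevents the sphere from being contracted through those. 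The paper's proof instead uses the angle map $\ang\colon\Cf_{n,r}(E)\to T^{n-2}$ globally: the constructed sphere is a section over $\partial[-\xi,\xi]^{n-2}$, the fibre $\ang^{-1}(0)$ forces $n$ collinear discs, and collinearity is impossible because every radius-$r$ disc in $E$ lies within distance $1-r$ of the centre (Lemma \ref{lem: punc}). For case (ii) this last point is itself nontrivial --- the semi-major radius of $E$ can be arbitrarily large, so a straight chain of $n$ discs could a priori fit along the major axis; the paper rules this out precisely via the shoulder tangency. Your remark that the straight chain ``violates the confinement'' gestures at this but only for chains of touching discs pinned to the axis, not for arbitrary collinear configurations, and you never set up the global map that converts the local obstruction into a statement about $\pi_{n-3}$ of the whole space.

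Second, the containment verification is missing, and you acknowledge as much. Reducing the fitting constraint to the single scalar $x_c=1-r$ plus the crude transverse check $b>r$ does not establish that every disc of every configuration in the family lies in $E$: the middle discs sit off-axis where the ellipse is narrower than $b$, and the end discs' admissible excursions are governed by the boundary curvature at second order --- exactly the ``real work'' you defer. The paper avoids this second-order expansion entirely by taking an explicit section over a fixed cube boundary with $\xi$ small, showing all centres lie in a smaller ellipse of semi-major radius $1-r$ (Lemma \ref{lem: fit} plus the focal-sum estimate), and then proving a clean containment statement (Lemma \ref{lem: small2} for case (i); an explicit stadium-shaped region for case (ii)). Without either that containment lemma or your proposed second-order analysis of $Q$, the proof is incomplete; and even with it, the retraction picture would still need to be replaced by (or supplemented with) a global obstruction of the Lemma \ref{lem: punc} type.
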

In case \emph{ii}, we show that the homotopy class remains for $e$ arbitrarily close to $1$, or equivalently for $a$ arbitrarily large.

\subsection{Acknowledgements}

I would like to thank my PhD supervisor, Dr. Jonny Evans, for his support and advice. I am grateful to Dr. Hannah Alpert for giving me feedback on an early draft. This research was funded by EPSRC.

\section{The critical radii for small numbers of discs} \label{chap: crits}

In this section, we find the balanced configurations for $\Cf_n, \ n \in \{2,3,4,5\}$ and the associated critical radii, as summarised in Fig. \ref{fig: gcrits}. Let $O(x) \subset N(x)$ be the subgraph induced by the interior vertices. For any weighted graph $(G,w)$, let $Z(G) \subset G$ be the subgraph containing all the vertices of $G$, and all the edges of $G$ with non-zero weight. We start with the following claim.

\begin{lem} \label{lem: hull} \cite[Lemma 4.1]{bald}
	Let $x\in \Cf_n$ be a critical configuration, and choose a non-trivial balanced weighting on $N(x)$. Then:
	\begin{itemize}
		\item[i.] Each non-trivial connected component of $Z(N(x))$ is contained in the interior of the convex hull of its boundary vertices, and
		\item[ii.] Each non-isolated interior vertex in $Z(N(x))$ is contained in the interior of the convex hull of its adjacent vertices.
	\end{itemize}
\end{lem}

\begin{prop} \label{prop: crit1}
	The unique critical radius of $\Cf_1$ is $r=1$. 
\end{prop}
\begin{proof}
	If $x_1 \neq 0$, then there is a unique closest point to $x_1$ on $\partial D^2$, and the vertex at $x_1$ is a leaf. Thus the only balanced choice of weights is trivial by Lemma \ref{lem: hull}\emph{ii}. Conversely, if $x_1=0$, then there is an edge of length 1 joining $x_1$ to each $p\in \partial D^2$, and these are balanced by giving every edge weight 1.
\end{proof}

\begin{prop} \label{prop: crit2}
	The unique critical radius of $\Cf_2$ is $r=\frac{1}{2}$.
\end{prop}
\begin{proof}
	Let $x\in \Cf_2$ and choose a weighting $w$ on $N(x)$. If $Z(N(x))$ does not contain an edge joining $x_1$ and $x_2$, then at least one vertex, say $x_1$, is adjacent to a boundary vertex. Then applying Lemma \ref{lem: hull}\emph{ii} to $x_1$, we see that $w$ is not balanced.
	
	Now suppose $w$ is balanced, so that $Z(N(x))$ contains an edge between $x_1$ and $x_2$. By Lemma \ref{lem: hull}\emph{i}, $x_1$ and $x_2$ must both be adjacent to the boundary in $Z(N(x))$, and the three edges must form a chord of $D^2$. Since boundary edges are orthogonal to the boundary, $N(x)$ is a diameter, and the associated critical radius is $\frac{1}{2}$. We can balance this by giving the interior edge weight 1 and the boundary edges weight 2. 
\end{proof}

We denote by $P_n$ the path on $n$ vertices (and $n-1$ edges), and by $C_n$ ($n\ge 3$) the cyclic graph on $n$ vertices. 

\begin{lem} \label{lem: 2}
	Take $n\ge3$, and let $x\in \Cf_n$ be balanced. Then each non-trivial connected component of $Z(O(x))$ contains a $P_3$ as a subgraph. 
\end{lem}
\begin{proof}
	Take $n\ge 2$, and let $x\in \Cf_n$ be balanced. Choose a non-trivial balanced weighting on $N(x)$, and suppose that $Z(N(x))$ has a connected component $G$ with at most one interior edge. Assume the vertices here are $x_1$ and $x_2$.  By the arguments in Prop. \ref{prop: crit2}, $G$ is a diameter of the unit disc, and the associated critical radius is $\frac{1}{2}$. However, there is no point $p$ in $D^2$ satisfying $|p-x_1| \ge 1$, $|p-x_2|\ge1$ and $\dist(p, \partial D^2) \ge \frac{1}{2}$ simultaneously. Thus $n=2$.
\end{proof}

\begin{prop} \label{prop: crit3}
    The critical radii of $\Cf_3$ are $\frac{1}{3}, 2\sqrt{3}-3$.
\end{prop}
\begin{proof}
    Let $x\in \Cf_3$ be a critical configuration and choose a balanced weighting on $N(x)$. Then $Z(O(x)) = P_3$ or $Z(O(x)) = C_3$ by Lemma \ref{lem: 2}. 

    If $Z(O(x)) = C_3$, then this is an equilateral triangle, so Lemma \ref{lem: hull}\emph{ii} implies that each vertex must be adjacent to a boundary vertex. Since the boundary edges all have the same length, the triangle must be centred at the origin. We can balance this configuration by giving the interior edges weight 1 and the boundary edges weight $2\sqrt{3}$. This configuration corresponds to the critical radius $2\sqrt{3}-3$.

    If $Z(O(x)) = P_3$, then applying Lemma \ref{lem: hull}\emph{ii} at each vertex in succession implies that the two vertices of degree 1 must be adjacent to a boundary vertex, and moreover, the four edges must all be collinear. Then $N(x)$ is a diameter of $D^2$. We can balance this by giving the interior edges weight 1 and the boundary edges weight 2, and the associated radius is $\frac{1}{3}$.
\end{proof}

\begin{prop} \label{prop: crit4}
    The critical radii of $\Cf_4$ are $\frac{1}{4}, \frac{1}{3}, \sqrt{2}-1$.
\end{prop}
\begin{proof}
    Let $x\in \Cf_4$ be a critical configuration and choose a balanced weighting $w$ on $N(x)$. By Lemma \ref{lem: 2}, each component of $Z(O(x))$ contains a $P_3$. We consider the following cases.

    Suppose $Z(O(x)) \supset C_4$. Then this appears as a rhombus. By Lemma \ref{lem: hull}\emph{ii}, each vertex must be adjacent to a boundary vertex. Then $N(x)$ must be a square centred at the origin. This can be balanced by giving the interior edges weight $1$ and the boundary edges weight $2\sqrt{2}$, and corresponds to the critical radius $\sqrt{2}-1$. 

    Next suppose $Z(O(x))$ contains a $C_3$ (as an equilateral triangle) but no $C_4$. Assume without loss of generality that the $C_3$ contains $x_1, x_2, x_3$. By Lemma \ref{lem: hull}\emph{ii}, each vertex of the $C_3$ must have a third incident edge in $Z(N(x))$. These cannot all be boundary edges, since these three vertices would form the critical configuration of radius $2\sqrt{3}-3$ from $\Cf_3$, and then there is insufficient space to fit the fourth point. Therefore $x_4$ is adjacent to exactly one vertex, say $x_3$, by an edge of non-zero weight. $x_4$ is also adjacent to a boundary vertex by Lemma \ref{lem: hull}\emph{ii}, and the two edges incident to $x_4$ are collinear. Then there are two possibilities: \newline
    1) $x_3=0$. Then the edge connecting $x_1$ to $x_3$ is collinear with the boundary edge incident to $x_1$. Thus $w$ is not balanced by Lemma \ref{lem: hull}\emph{ii} applied to $x_1$. \newline
    2) $x_3 \neq 0$. Since $x_1$ and $x_2$ are both at distance $r$ from the boundary, it follows that the line containing $x_3$ and $x_4$ is a line of symmetry for the configuration, whence we derive that $r$ satisfies $r^2 + \left( \left(3+\sqrt{3} \right)r -1\right)^2 = (1-r)^2$. This has solutions $r\in \{0, \frac{1}{3}\}$, but $0 \notin \tau(\Cf_4)$, so $r=\frac{1}{3}$. Thus $x_3 = 0$, a contradiction.

    Finally, suppose $Z(O(x))$ contains no $C_3$ or $C_4$. If it is $P_4$, then then applying Lemma \ref{lem: hull}\emph{ii} at each vertex in succession implies that $N(x)$ is a diameter containing all four points. This is balanced by giving the interior edges weight 1 and the boundary edges weight 2, and has associated radius $\frac{1}{4}$. If the interior of $Z(N(x))$ consists of a $P_3$ and an isolated point, then $Z(N(x))$ is a diameter containing three points by the same arguments, corresponding to radius $\frac{1}{3}$. Otherwise, the interior of $Z(N(x))$ consists of one vertex of degree 3, and three leaves. Then each leaf is incident to a boundary edge, which is collinear with its other incident edge, by Lemma \ref{lem: hull}\emph{ii}. This means that the vertex of degree 3 lies on three radial lines of equal length and is thus at the origin, so this corresponds to radius $\frac{1}{3}$ again.
\end{proof}

\begin{prop} \label{prop: crit5}
    The critical radii of $\Cf_5$ are $\frac{1}{5}, \frac{1}{4}, \frac{1}{3}, \frac{\sin \frac{\pi}{5}}{1 + \sin \frac{\pi}{5}}$.
\end{prop}
\begin{proof}
    Let $x\in \Cf_5$ be a critical configuration and choose a balanced weighting $w$ on $N(x)$. By Lemma \ref{lem: 2}, each component of $Z(O(x))$ contains a $P_3$. We consider the following cases.

    First, suppose $Z(O(x))$ contains a $C_5$. This is a pentagon with 5 equal edges. By Lemma \ref{lem: hull}\emph{ii}, each vertex must have a third incident edge in $Z(N(x))$, and these can be boundary edges or diagonals within the pentagon. \newline
    1) If there are no diagonals, then the $C_5$ is a regular pentagon centred at the origin. This is balanced by giving each interior edge weight 1, and each boundary edge weight $4\cos\frac{3\pi}{10}$, and corresponds to a critical radius $r = \frac{\sin \frac{\pi}{5}}{1 + \sin \frac{\pi}{5}}$. \newline
    2) If there is a diagonal, then one of the vertices on the diagonal does not satisfy the condition of Lemma \ref{lem: hull}\emph{ii}, so $w$ is not balanced.
    
    Next, suppose $Z(O(x))$ contains a $C_4$, but no $C_5$. Assume the vertices of the $C_4$ are $x_1, x_2, x_3, x_4$ in anticlockwise order. If the fifth point is not in the same component of $Z(N(x))$, then the $C_4$ would have to be the square with associated radius $\sqrt{2}-1$ from the case $n=4$ -- however, this is impossible as $x_5$ will not fit at a sufficient distance from the $C_4$ and the boundary. Therefore, $x_5$ is adjacent to exactly one vertex in the cycle, say $x_4$. Then $x_5$ must have a boundary edge collinear with the edge between $x_4$ and $x_5$ by Lemma \ref{lem: hull}\emph{ii}, and each of the remaining three vertices must be adjacent to the boundary. This yields three subcases: \newline
    1) $x_1$ and $x_3$ are each adjacent to a boundary vertex and $x_4=0$. If such a balanced weighting exists, it has radius $r=\frac{1}{3}$. We prove this is a critical radius in the final paragraph. \newline
    2) $x_1$ and $x_3$ are each adjacent to a boundary vertex and $x_4\neq0$. Since $\dist(x_1, \partial D^2) = \dist(x_3, \partial D^2)$ and $|x_1-x_4| = |x_3-x_4|$, it follows that $x_1$ and $x_3$ are equidistant from the diameter containing $x_4$ and $x_5$, and so $x_2$ lies on this diameter. Then by summing distances along this diameter, we have \[ 2 = \dist(x_5, \partial D^2)  + |x_4-x_5| + |x_2-x_4| + \dist(x_2, \partial D^2) \ge 6r \] However, since the edge from $x_4$ to $x_3$ is not radial and therefore not collinear with the boundary edge incident to $x_3$, we have \[ \dist(x_4, \partial D^2) < |x_3-x_4| + \dist(x_3, \partial D^2) = 3r \] $x_4$ lies on a diameter at a distance $3r$ from one end, so $\dist(x_4, \partial D^2) = \min\{3r, 2-3r\}$. Therefore $2-3r<3r$, which is a contradiction -- this case does not correspond to the stress graph of any critical $x\in \Cf_5$.

    Next, suppose $Z(O(x))$ contains a $C_3$, but no $C_4$ or $C_5$. Denote the vertices of the $C_3$ as $x_1, x_2, x_3$ in anticlockwise order, and the remaining vertices $x_4, x_5$. If the $C_3$ is an entire component of $Z(O(x))$, then we obtain the critical configuration of radius $2\sqrt{3}-3$ from $\Cf_3$, but this is too big to fit the remaining points. If the component containing $C_3$ contains 4 vertices, then the fourth must be adjacent to exactly one vertex of the $C_3$, but we argue in the proof for $\Cf_4$ that this configuration is never critical. Therefore, this component contains all 5 vertices. The graphs of this type can be considered in three cases. \newline
    1) The $C_3$ contains a vertex, say $x_3$, which is adjacent to $x_4$ and $x_5$. Then both are adjacent to a boundary vertex, and the two incident edges at each vertex are collinear, by Lemma \ref{lem: hull}\emph{ii}. Therefore $x_3$ lies either on two non-collinear radial lines, or at the centre of a diameter, so $x_3=0$. If such a balanced weighting exists, it has radius $r=\frac{1}{3}$. We prove this is a critical radius in the final paragraph. \newline
    2) One of the vertices of $C_3$, say $x_3$, is adjacent to one vertex, say $x_4$, and $x_4$ is adjacent to $x_5$. Then $x_5$ is adjacent to a boundary vertex, and the section of graph between $x_3$ and this boundary vertex is a straight line by Lemma \ref{lem: hull}\emph{ii}; moreover, $x_1$ and $x_2$ are adjacent to boundary vertices. This latter is impossible if $x_3=0$; therefore, $x_1$ and $x_2$ are equidistant from the line passing through $x_3$ and $x_5$. But $x_1$ does not lie in the convex hull of its adjacent boundary vertex, $x_2$ and $x_3$, so there is no such balanced weighting by Lemma \ref{lem: hull}\emph{ii}.  \newline
    3) Two vertices of the triangle, say $x_2$ and $x_3$, are each adjacent to one of the remaining vertices, say $x_4, x_5$ respectively. Then $x_4, x_5$ are each adjacent to a boundary vertex, and the edge joining them to $x_2$ and $x_3$ respectively is collinear with their boundary edge by Lemma \ref{lem: hull}\emph{ii}. Then $\dist(x_2, \partial D^2) = \dist(x_3, \partial D^2) = 3r$. However, we also have $\dist(x_2, \partial D^2) \le |x_2-x_1| + \dist(x_1, \partial D^2) = 3r$ and $\dist(x_3, \partial D^2) \le |x_3-x_1| + \dist(x_1, \partial D^2) = 3r$. At least one of these is not a straight line distance, and so the inequality is strict, which is a contradiction. Therefore this case does not correspond to the stress graph of a critical $x\in \Cf_5$.

    Finally, suppose $Z(O(x))$ contains no cycles. Using arguments from the proof for $\Cf_4$, we can say that if $Z(O(x))$ is a $P_5$, then $Z(N(x))$ is a diameter of $D^2$, corresponding to $r=\frac{1}{5}$; if $Z(O(x))$ is the union of a $P_4$ with an isolated vertex, then the non-trivial component of $Z(N(x))$ is a diameter of $D^2$, corresponding to $r=\frac{1}{4}$; and if the longest path in $Z(O(x))$ is a $P_3$, then there is a vertex at the origin and the corresponding radius is $r = \frac{1}{3}$. The only remaining candidate for $Z(O(x))$ is the graph containing a $P_4$ with the fifth vertex adjacent to one of the middle vertices. By Lemma \ref{lem: hull}\emph{ii}, each of the leaves of this graph has an incident boundary edge. We see that the vertex of degree 3 lies on three radial lines, not all of the same length, which is a contradiction.
\end{proof}

\begin{figure}
    \centering
    \begin{tikzpicture}
        \begin{scope}[scale=0.6];
        \begin{scope}[shift={(-3,7)}];
            \draw[color=black] (0,0) circle [radius=2];
            \draw[fill=blue!10] (0,0) circle [radius=2] node[circle, inner sep = 0, minimum size = 3, fill=black] {};;
            \node at (0,-3) {$1$};
        \end{scope};
        \begin{scope}[shift={(2,7)}];
            \draw[color=black] (0,0) circle [radius=2];
            \draw[fill=blue!10] (-1,0) circle [radius=1] node[circle, inner sep = 0, minimum size = 3, fill=black] (2) {};
            \draw[fill=blue!10] (1, 0) circle [radius=1] node[circle, inner sep = 0, minimum size = 3, fill=black] (3) {};
            \node[circle, inner sep = 0, minimum size = 3, fill=black] at (-2,0) (1) {};
            \node[circle, inner sep = 0, minimum size = 3, fill=black] at (2,0) (4) {};
            \draw[color=black!50] (1) -- (2) -- (3) -- (4);
            \node at (0,-3) {$\frac{1}{2}$};
        \end{scope};
        \begin{scope}[shift={(7,7)}];
            \draw[color=black] (0,0) circle [radius=2];
            \draw[fill=blue!10] (-1.33,0) circle [radius=0.67] node[circle, inner sep = 0, minimum size = 3, fill=black] (2) {};
            \draw[fill=blue!10] (0, 0) circle [radius=0.67] node[circle, inner sep = 0, minimum size = 3, fill=black] (3) {};
            \draw[fill=blue!10] (1.33, 0) circle [radius=0.67] node[circle, inner sep = 0, minimum size = 3, fill=black] (4) {};
            \node[circle, inner sep = 0, minimum size = 3, fill=black] at (-2,0) (1) {};
            \node[circle, inner sep = 0, minimum size = 3, fill=black] at (2,0) (5) {};
            \draw[color=black!50] (1) -- (2) -- (3) -- (4) -- (5);
            \node at (0,-3) {$\frac{1}{3}$};
        \end{scope};
        \begin{scope}[shift={(12,7)}];
            \draw[color=black] (0,0) circle [radius=2];
            \draw[fill=blue!10] (0, 1.07) circle [radius=0.93] node[circle, inner sep = 0, minimum size = 3, fill=black] (2) {};
            \draw[fill=blue!10] (0.93, -0.54) circle [radius=0.93] node[circle, inner sep = 0, minimum size = 3, fill=black] (3) {};
            \draw[fill=blue!10] (-0.93, -0.54) circle [radius=0.93] node[circle, inner sep = 0, minimum size = 3, fill=black] (4) {};
            \node[circle, inner sep = 0, minimum size = 3, fill=black] at (0,2) (1) {};
            \node[circle, inner sep = 0, minimum size = 3, fill=black] at (1.73,-1) (5) {};
            \node[circle, inner sep = 0, minimum size = 3, fill=black] at (-1.73,-1) (6) {};
            \draw[color=black!50] (1) -- (2) -- (3) -- (5);
            \draw[color=black!50] (3) -- (4) -- (6);
            \draw[color=black!50] (2) -- (4);
            \node at (0,-3) {$2\sqrt{3}-3$};
        \end{scope};
        \begin{scope}[shift={(-3,1)}];
            \draw[color=black] (0,0) circle [radius=2];
            \draw[fill=blue!10] (-1.5,0) circle [radius=0.5] node[circle, inner sep = 0, minimum size = 3, fill=black] (2) {};
            \draw[fill=blue!10] (-0.5, 0) circle [radius=0.5] node[circle, inner sep = 0, minimum size = 3, fill=black] (3) {};
            \draw[fill=blue!10] (0.5, 0) circle [radius=0.5] node[circle, inner sep = 0, minimum size = 3, fill=black] (4) {};
            \draw[fill=blue!10] (1.5, 0) circle [radius=0.5] node[circle, inner sep = 0, minimum size = 3, fill=black] (5) {};
            \node[circle, inner sep = 0, minimum size = 3, fill=black] at (-2,0) (1) {};
            \node[circle, inner sep = 0, minimum size = 3, fill=black] at (2,0) (6) {};
            \draw[color=black!50] (1) -- (2) -- (3) -- (4) -- (5) -- (6);
            \node at (0,-3) {$\frac{1}{4}$};
        \end{scope};
        \begin{scope}[shift={(2,1)}];
            \draw[color=black] (0,0) circle [radius=2];
            \draw[fill=blue!10] (0, 1.33) circle [radius=0.67] node[circle, inner sep = 0, minimum size = 3, fill=black] (2) {};
            \draw[fill=blue!10] (0,0) circle [radius=0.67] node[circle, inner sep = 0, minimum size = 3, fill=black] (3) {};
            \draw[fill=blue!10] (1.15, -0.67) circle [radius=0.67] node[circle, inner sep = 0, minimum size = 3, fill=black] (4) {};
            \draw[fill=blue!10] (-1.15, -0.67) circle [radius=0.67] node[circle, inner sep = 0, minimum size = 3, fill=black] (6) {};
            \node[circle, inner sep = 0, minimum size = 3, fill=black] at (0,2) (1) {};
            \node[circle, inner sep = 0, minimum size = 3, fill=black] at (1.73,-1) (5) {};
            \node[circle, inner sep = 0, minimum size = 3, fill=black] at (-1.73,-1) (7) {};
            \draw[color=black!50] (1) -- (2) -- (3) -- (4) -- (5);
            \draw[color=black!50] (3) -- (6) -- (7);
            \node at (0,-3) {$\frac{1}{3}$};
        \end{scope};
        \begin{scope}[shift={(7,1)}];
            \draw[color=black] (0,0) circle [radius=2];
            \draw[fill=blue!10] (0.83, 0.83) circle [radius=0.83] node[circle, inner sep = 0, minimum size = 3, fill=black] (2) {};
            \draw[fill=blue!10] (0.83, -0.83) circle [radius=0.83] node[circle, inner sep = 0, minimum size = 3, fill=black] (3) {};
            \draw[fill=blue!10] (-0.83, 0.83) circle [radius=0.83] node[circle, inner sep = 0, minimum size = 3, fill=black] (4) {};
            \draw[fill=blue!10] (-0.83, -0.83) circle [radius=0.83] node[circle, inner sep = 0, minimum size = 3, fill=black] (6) {};
            \node[circle, inner sep = 0, minimum size = 3, fill=black] at (1.41, 1.41) (1) {};
            \node[circle, inner sep = 0, minimum size = 3, fill=black] at (1.41, -1.41) (5) {};
            \node[circle, inner sep = 0, minimum size = 3, fill=black] at (-1.41, 1.41) (7) {};
            \node[circle, inner sep = 0, minimum size = 3, fill=black] at (-1.41, -1.41) (8) {};
            \draw[color=black!50] (1) -- (2) -- (3) -- (5);
            \draw[color=black!50] (3) -- (6) -- (8);
            \draw[color=black!50] (6) -- (4) -- (7);
            \draw[color=black!50] (4) -- (2);
            \node at (0,-3) {$\sqrt{2}-1$};
        \end{scope};
        \begin{scope}[shift={(-3, -5)}]; 
            \draw[color=black] (0,0) circle [radius=2];
            \draw[fill=blue!10] (-1.6,0) circle [radius=0.4] node[circle, inner sep = 0, minimum size = 3, fill=black] (2) {};
            \draw[fill=blue!10] (-0.8, 0) circle [radius=0.4] node[circle, inner sep = 0, minimum size = 3, fill=black] (3) {};
            \draw[fill=blue!10] (0, 0) circle [radius=0.4] node[circle, inner sep = 0, minimum size = 3, fill=black] (4) {};
            \draw[fill=blue!10] (0.8, 0) circle [radius=0.4] node[circle, inner sep = 0, minimum size = 3, fill=black] (5) {};
            \draw[fill=blue!10] (1.6, 0) circle [radius=0.4] node[circle, inner sep = 0, minimum size = 3, fill=black] (6) {};
            \node[circle, inner sep = 0, minimum size = 3, fill=black] at (-2,0) (1) {};
            \node[circle, inner sep = 0, minimum size = 3, fill=black] at (2,0) (7) {};
            \draw[color=black!50] (1) -- (2) -- (3) -- (4) -- (5) -- (6) -- (7);
            \node at (0,-3) {$\frac{1}{5}$};
        \end{scope};
        \begin{scope}[shift={(2,-5)}];
            \draw[color=black] (0,0) circle [radius=2];
            \draw[fill=blue!10] (-1.5,0) circle [radius=0.5] node[circle, inner sep = 0, minimum size = 3, fill=black] (2) {};
            \draw[fill=blue!10] (-0.5, 0) circle [radius=0.5] node[circle, inner sep = 0, minimum size = 3, fill=black] (3) {};
            \draw[fill=blue!10] (0.5, 0) circle [radius=0.5] node[circle, inner sep = 0, minimum size = 3, fill=black] (4) {};
            \draw[fill=blue!10] (1.5, 0) circle [radius=0.5] node[circle, inner sep = 0, minimum size = 3, fill=black] (5) {};
            \draw[fill=blue!10] (0.3, 1.3) circle [radius=0.5] node[circle, inner sep = 0, minimum size = 3, fill=black] {};
            \node[circle, inner sep = 0, minimum size = 3, fill=black] at (-2,0) (1) {};
            \node[circle, inner sep = 0, minimum size = 3, fill=black] at (2,0) (6) {};
            \draw[color=black!50] (1) -- (2) -- (3) -- (4) -- (5) -- (6);
            \node at (0,-3) {$\frac{1}{4}$};
        \end{scope};
        \begin{scope}[shift={(7, -5)}];
            \draw[color=black] (0,0) circle [radius=2];
            \draw[fill=blue!10] (0, 1.33) circle [radius=0.67] node[circle, inner sep = 0, minimum size = 3, fill=black] (2) {};
            \draw[fill=blue!10] (0,0) circle [radius=0.67] node[circle, inner sep = 0, minimum size = 3, fill=black] (3) {};
            \draw[fill=blue!10] (1.33,0) circle [radius=0.67] node[circle, inner sep = 0, minimum size = 3, fill=black] (4) {};
            \draw[fill=blue!10] (0, -1.33) circle [radius=0.67] node[circle, inner sep = 0, minimum size = 3, fill=black] (6) {};
            \draw[fill=blue!10] (-1.33,0) circle [radius=0.67] node[circle, inner sep = 0, minimum size = 3, fill=black] (8) {};
            \node[circle, inner sep = 0, minimum size = 3, fill=black] at (0,2) (1) {};
            \node[circle, inner sep = 0, minimum size = 3, fill=black] at (2,0) (5) {};
            \node[circle, inner sep = 0, minimum size = 3, fill=black] at (0,-2) (7) {};
            \node[circle, inner sep = 0, minimum size = 3, fill=black] at (-2,0) (9) {};
            \draw[color=black!50] (1) -- (2) -- (3) -- (4) -- (5);
            \draw[color=black!50] (9) -- (8) -- (3) -- (6) -- (7);
            \node at (0,-3) {$\frac{1}{3}$};
        \end{scope};
        \begin{scope}[shift={(12, -5)}];
            \draw[color=black] (0,0) circle [radius=2];
            \draw[fill=blue!10] (0, 1.26) circle [radius=0.74] node[circle, inner sep = 0, minimum size = 3, fill=black] (1) {};
            \draw[fill=blue!10] (1.2, 0.39) circle [radius=0.74] node[circle, inner sep = 0, minimum size = 3, fill=black] (2) {};
            \draw[fill=blue!10] (0.74, -1.02) circle [radius=0.74] node[circle, inner sep = 0, minimum size = 3, fill=black] (3) {};
            \draw[fill=blue!10] (-0.74, -1.02) circle [radius=0.74] node[circle, inner sep = 0, minimum size = 3, fill=black] (4) {};            
            \draw[fill=blue!10] (-1.2, 0.39) circle [radius=0.74] node[circle, inner sep = 0, minimum size = 3, fill=black] (5) {};
            \node[circle, inner sep = 0, minimum size = 3, fill=black] at (0,2) (6) {};
            \node[circle, inner sep = 0, minimum size = 3, fill=black] at (1.9,0.62) (7) {};
            \node[circle, inner sep = 0, minimum size = 3, fill=black] at (1.18, -1.62) (8) {};
            \node[circle, inner sep = 0, minimum size = 3, fill=black] at (-1.18, -1.62) (9) {};
            \node[circle, inner sep = 0, minimum size = 3, fill=black] at (-1.9,0.62) (10) {};
            \draw[color=black!50] (6) -- (1) -- (2) -- (3) -- (4) -- (5) -- (1);
            \draw[color=black!50] (2) -- (7);
            \draw[color=black!50] (3) -- (8);
            \draw[color=black!50] (4) -- (9);
            \draw[color=black!50] (5) -- (10);
            \node at (0,-3) {$\frac{ \sin \frac{\pi}{5}}{1+ \sin \frac{\pi}{5}}$};
        \end{scope};
        \end{scope};
    \end{tikzpicture}
    \caption{The critical configurations and associated critical radii of $\Cf_n$, $n\in \{1, 2,3,4,5 \}$, with the underlying stress graphs and associated configurations of discs. In most pairs $(n,r)$ shown, the critical configuration is unique up to rotation of the unit disc. The exceptions are $\left(5, \frac{1}{4} \right)$ and $\left(5, \frac{1}{3}\right)$, where the isolated disc may move freely; and $\left(4, \frac{1}{3}\right)$, where the outer discs may move freely, provided that the centres are never contained in an open semicircle centred at the origin.}
    \label{fig: gcrits}
\end{figure}

\section{A new homotopy class beyond the first critical radius} \label{chap: nts}

\subsection{Four discs, and braids of thick strings \label{sec: n=4}}

For $n=4$, understanding the spaces $\Cf_{n,r}$ as $r$ changes is comparatively simple, as there are only three critical radii (see Prop. \ref{prop: crit4}). For $r>\sqrt{2}-1$, the configuration space is empty. For $\frac{1}{3}<r\le \sqrt{2}-1$, the configuration space is homotopy equivalent to $\Cf_{4,\sqrt{2}-1}$, in which there is only one configuration up to rotation of the unit disc and permutation of the four discs, so the configuration space is a disjoint union of six circles. For $r\le \frac{1}{4}$, it is the configuration space of points. Thus, it only remains to check $\frac{1}{4}<r\le \frac{1}{3}$. In this case we will demonstrate the existence of a `thick braid': a non-trivial loop in $\Cf_{4,r}$ which is trivial in $\Cf_4$ when the discs of each configuration are replaced by their centres. This immediately shows that the radius of the discs affects the homotopy-type of the space: for $r\le \frac{1}{4}$, $\Cf_{n,r}$ is a $K(PBr_4,1)$, whereas this is no longer the case for $r>\frac{1}{4}$.

\begin{thm} \label{thm: braid}
    Let $\frac{1}{4}< r \le \frac{1}{3}$. Then $\Cf_{4, r}$ contains a non-trivial loop  which is homotopic to $\sigma_3 \sigma_1^{-1} \sigma_3^{-1} \sigma_1$ in the standard representation of the braid group, and therefore trivial when the disc radii are reduced below $\frac{1}{4}$. 
\end{thm}
A realisation of this loop is depicted in Figure \ref{fig: n=4}. 

A key piece of the proof comes in proving that the given loop is non-contractible. This will rely on the following map (inspired by \cite{alp}) and lemma, which will be used in this section and again in \S\ref{chap: deform}.

\begin{defn} \label{defn: ang}
    Given some $\DD \in \Cf_{n,r}(U)$, and some non-repeating sequence $I=(i_k)$ of integers $1\le i_k \le n$, let $\theta_k \coloneqq \theta_k(\DD)$ be the anticlockwise angle from the vector $(1,0)$ to the vector $x_{i_{k+1}}-x_{i_k}$ for all $k$. Let $\phi_k = \theta_{k}-\theta_{k-1}$ for $2 \le k \le n-1$.
    Then the \emph{angle map} is $\ang_I \colon \Cf_{n,r}(U) \to T^{n-2}, \ \DD \mapsto (\phi_2(\DD), \ldots, \phi_{n-1}(\DD))$. If $I$ is the sequence $1, 2, \ldots, n$, then we write $\ang(\DD) \coloneqq \ang_I(\DD)$.
\end{defn}

We may think of $\phi_k$ as a turning angle: if we walk along a straight path from $x_{i_{k-1}}$ to $x_{i_k}$, and then from $x_{i_k}$ to $x_{i_{k+1}}$, then $\phi_k$ is the angle through which we turn at $x_{i_k}$ (see Fig. \ref{fig: ang}). 

\begin{figure}[htb]
\begin{center}
    \begin{tikzpicture}
        \draw[dashed] (2,-0.66) -- (0,0) node[circle, inner sep = 0, minimum size = 3, fill=black] {} -- (1.2,0);
        \draw[dashed] (-2.12,0.7) -- (-0.7,0.7);
        \draw (-2.12,0.7) node[circle, inner sep = 0, minimum size = 3, fill=black] {} -- (0,0) -- (2,1) node[circle, inner sep = 0, minimum size = 3, fill=black] {};
        \node[] at (-0.25,-0.2) {$x_{i_k}$};
        \node[] at (2,1.3) {$x_{i_{k+1}}$};
        \node[] at (-2.3,0.4) {$x_{i_{k-1}}$};
        \draw[->] (0.8,0) to[out=90, in=-60] (0.7, 0.35);
        \draw[->] (1.6,-0.55) to[out=65, in=-60] (1.5, 0.75);
        \draw[->] (-1.2,0.7) to[out=-90, in=60] (-1.25, 0.45);
        \node[] at (-1,1) {$\theta_{k-1}$};
        \node[] at (1.1,0.25) {$\theta_k$};
        \node[] at (2,0) {$\phi_k$};
    \end{tikzpicture}
\end{center}
\caption{The angles $\theta_k$ and $\phi_k$ used to construct the map $\ang$ in Def. \ref{defn: ang}. Angles are taken to be anticlockwise from the dotted line to the solid line, so $\theta_{k-1}<0$ and $\theta_k, \phi_k>0$ in this diagram.} 
\label{fig: ang}
\end{figure}

\begin{figure}[htbp]
\begin{center}
    \begin{tikzpicture}[node distance={15mm}, thick, main/.style={}]
    	\begin{scope}[scale=0.8];
        \draw[color=black] (0,0) rectangle (4,4);
        \node[shape = cross out, draw = red] (5) at (2,2) {};
        \draw[->, color=blue] (1,2) -- (1,1) -- (2,1); 
        \draw[->, color=blue] (2,1) -- (3,1) -- (3,2); 
        \draw[->, color=blue] (3,2) -- (3,3) -- (2,3);
        \draw[->, color=blue] (2,3) -- (1,3) -- (1,2);
        \node[circle, inner sep = 0, minimum size = 6, draw=blue!60] (10) at (1,1) {};
        \node[circle, inner sep = 0, minimum size = 6, draw=blue!60] (11) at (3,1) {};
        \node[circle, inner sep = 0, minimum size = 6, draw=blue!60] (12) at (1,3) {};
        \node[circle, inner sep = 0, minimum size = 6, draw=blue!60] (13) at (3,3) {};
        \draw[color=green] (0.9,1.1) -- (-2.8,-0.1);
        \draw[color=green] (1.1,0.9) -- (-0.1,-2.8);
        \begin{scope}[rotate=-90, shift={(-4,0)}];
            \draw[color=green] (0.9,1.1) -- (-2.8,-0.1);
            \draw[color=green] (1.1,0.9) -- (-0.1,-2.8);
        \end{scope};
        \begin{scope}[rotate=90, shift={(0,-4)}];
            \draw[color=green] (0.9,1.1) -- (-2.8,-0.1);
            \draw[color=green] (1.1,0.9) -- (-0.1,-2.8);
        \end{scope};
        \begin{scope}[rotate=180, shift={(-4,-4)}];
            \draw[color=green] (0.9,1.1) -- (-2.8,-0.1);
            \draw[color=green] (1.1,0.9) -- (-0.1,-2.8);
        \end{scope};
        \begin{scope}[shift={(-2,-2)}, rotate=-90];
            \draw[color=black] (0,0) circle [radius=2];
            \draw[color=black, fill=blue!10] (0,0.6) circle [radius=0.58] node {4};
            \draw[color=black, fill=blue!10] (0,-0.6) circle [radius=0.58] node {1};
            \draw[color=black, fill=blue!10] (-1.2,0.6) circle [radius=0.58] node {3};
            \draw[color=black, fill=blue!10] (-1.2,-0.6) circle [radius=0.58] node {2};
            \draw[color=black!60, densely dashed] (-0.6,0) -- (0,0) to [out=0, in=-90] (0.6,0.6) to[out=90, in=0] (0,1.2) -- (-1.2,1.2) to[out=180, in=90] (-1.8, 0.6) -- (-1.8, -0.6) to[out=-90, in=180] (-1.2,-1.2) to[out=0, in=-90] (-0.6,-0.6) -- (-0.6,0);
            \draw[->, color=black!60] (0, -1.25) to[out=-100, in=-70] (-1, -1.3);
            \draw[->, color=black!60] (-0.6, 1.25) to[out=80, in=110] (0.6, 1.3);
        \end{scope};
        \begin{scope}[shift={(-2,6)}, rotate=-90];
            \draw[color=black] (0,0) circle [radius=2];
            \draw[color=black, fill=blue!10] (-1.2,0.6) circle [radius=0.58] node {4};
            \draw[color=black, fill=blue!10] (1.2,-0.6) circle [radius=0.58] node {1};
            \draw[color=black, fill=blue!10] (0,0.6) circle [radius=0.58] node {3};
            \draw[color=black, fill=blue!10] (0,-0.6) circle [radius=0.58] node {2};
            \draw[color=black!60, densely dashed] (0.6,0) -- (1.2,0) to [out=0, in=90] (1.8,-0.6) to[out=-90, in=0] (1.2,-1.2) -- (0,-1.2) to[out=180, in=-90] (-0.6, -0.6) -- (-0.6, 0.6) to[out=90, in=180] (0,1.2) to[out=0, in=90] (0.6,0.6) -- (0.6,0);
            \draw[->, color=black!60] (0.6, -1.25) to[out=-100, in=-70] (-0.5, -1.3);
            \draw[->, color=black!60] (-1.2, 1.25) to[out=80, in=110] (0, 1.3);
        \end{scope};
        \begin{scope}[shift={(6,-2)}, rotate=-90];
            \draw[color=black] (0,0) circle [radius=2];
            \draw[color=black, fill=blue!10] (1.2,0.6) circle [radius=0.58] node {4};
            \draw[color=black, fill=blue!10] (-1.2,-0.6) circle [radius=0.58] node {1};
            \draw[color=black, fill=blue!10] (0,0.6) circle [radius=0.58] node {3};
            \draw[color=black, fill=blue!10] (0,-0.6) circle [radius=0.58] node {2};
            \draw[color=black!60, densely dashed] (-0.6,0) -- (-0.6,0.6) to [out=90, in=180] (0,1.2) to[out=0, in=90] (0.6,0.6) -- (0.6,-0.6) to[out=-90, in=0] (0, -1.2) -- (-1.2, -1.2) to[out=180, in=-90] (-1.8,-0.6) to[out=90, in=180] (-1.2,0) -- (-0.6,0);
            \draw[->, color=black!60] (-0.6, -1.25) to[out=-80, in=-110] (0.5, -1.3);
            \draw[->, color=black!60] (1.2, 1.25) to[out=100, in=70] (0.1, 1.3);
        \end{scope};
        \begin{scope}[shift={(6,6)}, rotate=-90];
            \draw[color=black] (0,0) circle [radius=2];
            \draw[color=black, fill=blue!10] (0,0.6) circle [radius=0.58] node {4};
            \draw[color=black, fill=blue!10] (0,-0.6) circle [radius=0.58] node {1};
            \draw[color=black, fill=blue!10] (1.2,0.6) circle [radius=0.58] node {3};
            \draw[color=black, fill=blue!10] (1.2,-0.6) circle [radius=0.58] node {2};
            \draw[color=black!60, densely dashed] (0.6,0) -- (0,0) to [out=180, in=-90] (-0.6,0.6) to[out=90, in=180] (0,1.2) -- (1.2,1.2) to[out=0, in=90] (1.8, 0.6) -- (1.8, -0.6) to[out=-90, in=0] (1.2,-1.2) to[out=180, in=-90] (0.6,-0.6) -- (0.6,0);
            \draw[->, color=black!60] (0, -1.25) to[out=-80, in=-110] (1, -1.3);
            \draw[->, color=black!60] (0.6, 1.25) to[out=100, in=70] (-0.6, 1.3);
        \end{scope};
        \node[color=black] (10) at (0, -0.3) {\footnotesize{$(-\pi,-\pi)$}};
        \node[color=black] (11) at (4, 4.3) {\footnotesize{$(\pi,\pi)$}};
        \draw[->] (-2.8,3.7) to[out=-120, in=120] (-2.8,0.3);
        \draw[->] (6.8,0.3) to[out=60, in=-60] (6.8,3.7);
        \draw[->] (0.3,-2.8) to[out=-30, in=-150] (3.7,-2.8);
        \draw[->] (3.7,6.8) to[out=150, in=30] (0.3,6.8);
        \node (12) at (2,-3.8) {$\sigma_1^{-1}$};
        \node (13) at (2, 7.7) {$\sigma_1$};
        \node (14) at (-3.8, 2) {$\sigma_3^{-1}$};
        \node (15) at (7.8, 2) {$\sigma_3$};
        \end{scope};
    \end{tikzpicture}
\end{center}
\caption{A loop of configurations of four discs of radius 0.29 inside the unit disc, starting at the top right, which is homotopic to the sequence $\sigma_3 \sigma_1^{-1} \sigma_3^{-1} \sigma_1$ in the standard presentation of the braid group. This descends to the loop $\partial \left[-\frac{\pi}{2}, \frac{\pi}{2}\right]^2 \subset T^2$ (blue) under the map $\ang$. The blue loop is non-contractible in $T^2 \backslash \{0\}$; then since $0\notin \ang(\Cf_{4, 0.29})$, the loop of configurations is also non-contractible. This homotopy class persists for $\frac{1}{4}< r \le \frac{1}{3}$.} 
\label{fig: n=4}
\end{figure}

\begin{lem} \label{lem: punc}
    Let $n,k\in \NN$, $r>\frac{1}{k}$ and $U\subset \RR^2$ a connected, codimension zero submanifold such that any disc of radius $r$ in $U$ is contained in $D^2$. Let $T$ be some $m$-dimensional manifold, $f\colon \Cf_{n,r}(U) \to T$ be a continuous map, and $p\in T$ be such that if $f(\mathcal{D})=p$, then some $k$ discs in $\DD$ lie on a straight line. Let $S$ be an $(m-1)$-sphere in $T\backslash \{p\}$ such that $[S] \neq 0 \in \pi_{m-1}(T\backslash \{p\})$ and $\mathcal{S}\colon S\to \Cf_{n,r}(U)$ a local section of $f$. Then $\mathcal{S}$ represents a non-trivial class in $\pi_{m-1}(\Cf_{n,r}(U))$.
\end{lem}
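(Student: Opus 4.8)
The plan is to reduce everything to the observation that the puncture point $p$ is in fact never attained by $f$, after which the statement becomes a one-line application of the functoriality of homotopy groups.

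First I would verify that $f^{-1}(p)=\emptyset$. By hypothesis, any $\DD$ with $f(\DD)=p$ has $k$ of its discs centred on a common line. These $k$ discs are pairwise non-overlapping of radius $r$, so along that line consecutive centres are at distance at least $2r$; hence the centres span a segment of length at least $2r(k-1)$, and together with the two outer radii the $k$ discs occupy a segment of length at least $2rk$. Since every disc of $\DD$ lies in $U$ and hence, by the standing assumption on $U$, in $D^2$, this segment must fit inside a set of diameter $2$, forcing $2rk\le 2$, i.e. $r\le\frac1k$. This contradicts the hypothesis $r>\frac1k$, so no such configuration exists and $p\notin f(\Cf_{n,r}(U))$.

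Consequently $f$ corestricts to a continuous map $\tilde f\colon \Cf_{n,r}(U)\to T\setminus\{p\}$. Interpreting the local section condition as $f\circ\mathcal S=\iota_S$, where $\iota_S\colon S\hookrightarrow T\setminus\{p\}$ is the inclusion, we obtain $\tilde f\circ\mathcal S=\iota_S$. Applying $\pi_{m-1}$, based at $\mathcal S(s_0)$ and at $s_0=\iota_S(s_0)$ respectively, and using functoriality, $\tilde f_\ast[\mathcal S]=[\tilde f\circ\mathcal S]=[\iota_S]=[S]\ne 0$ in $\pi_{m-1}(T\setminus\{p\})$. A group homomorphism cannot send $0$ to a non-zero element, so $[\mathcal S]\ne 0$ in $\pi_{m-1}(\Cf_{n,r}(U))$, as required.

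The topological content is thus minimal: the only real work is the geometric packing estimate ruling out $k$ collinear radius-$r$ discs inside a region of diameter $2$, which is precisely where the threshold $r>\frac1k$ enters. The main things to be careful about are the interpretation of ``local section'' as the identity $\tilde f\circ\mathcal S=\iota_S$, the choice of compatible basepoints $\mathcal S(s_0)\mapsto s_0$ so that $\tilde f_\ast$ is defined, and the implicit claim that $S$ represents a genuine element of $\pi_{m-1}(T\setminus\{p\})$ through its inclusion; all of these are routine once the image of $f$ has been confined to the punctured manifold.
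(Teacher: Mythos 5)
Your proposal is correct and follows essentially the same route as the paper: observe that $r>\frac{1}{k}$ rules out $k$ collinear discs of radius $r$ inside $D^2$, hence $f$ misses $p$ and corestricts to $T\setminus\{p\}$, and then functoriality of $\pi_{m-1}$ applied to $f\circ\mathcal{S}=\iota_S$ gives $[\mathcal{S}]\neq 0$. Your write-up is in fact somewhat more explicit than the paper's (which compresses the packing estimate and the pushforward step into single sentences), but the content is identical.
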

\begin{proof}
    First, we note that $\mathcal{S}$ is an $(m-1)$-sphere inside $\Cf_{n,r}(U)$, so represents some class in $\pi_{m-1}(\Cf_{n,r}(U))$. Since $r>\frac{1}{k}$, we cannot fit $k$ discs of radius $r$ on one straight line inside $D^2$, nor therefore inside $U$, so $f(\Cf_{n,r}(U)) \subset T\backslash \{p\}$. Given that $S = f(\mathcal{S})$, we have $f_*[\SS] = [S] \neq 0 \in \pi_{m-1}(T\backslash \{p\})$, and the result follows since $f_*$ is a homomorphism.
\end{proof}

Once a choice of $S, \SS$ is made, this lemma can be used to prove that $\SS$ is non-trivial in its homotopy group. We will typically apply this lemma with $f=\ang_I$ for some appropriate choice of $I$, $T=T^{n-2}$, and $p=0$, since $p=0$ is often missing from the image of $\ang$ if the disc radius is large enough.
 
\begin{proof}[Proof of Theorem \ref{thm: braid}]
    Consider the loop $S = \partial \left( \left[-\frac{\pi}{2}, \frac{\pi}{2}\right]^2 \right) \subset T^2$. 
    We can then define a local section $\mathcal{S} \colon S \to \Cf_{4,r}$ for any $r\le \frac{1}{2+\sqrt{2}}$ by
    \[ (\phi_2, \phi_3) \mapsto \frac{1}{2+\sqrt{2}} \left( 
    \begin{array}{l}
           (-1-\cos\phi_2-\cos\phi_3, \sin\phi_2-\sin\phi_3) \\
           (-1+\cos\phi_2-\cos\phi_3, -\sin\phi_2-\sin\phi_3) \\ 
           (1+\cos\phi_2-\cos\phi_3, -\sin\phi_2-\sin\phi_3) \\
           (1+\cos\phi_2+\cos\phi_3, -\sin\phi_2+\sin\phi_3)     
    \end{array} \right)      
    \]
    where the rows are the coordinates of the centres, denoted $x_i$ ($i\in \{1,2,3,4\}$), of the four discs. 
     
    It is possible from the given coordinates to check $\SS$ is well-defined. First note that on $S$, there is always one $\phi_j = \pm \frac{\pi}{2}$. Then $\sin\phi_j = \pm 1$ and $\cos\phi_j=0$, so letting $\phi$ represent the other coordinate, $|x_i|^2 = \left(\frac{1}{2+\sqrt{2}}\right)^2 \left((1\pm \cos\phi)^2 + (1\pm \sin\phi)^2\right) \le \left(\frac{1+\sqrt{2}}{2+\sqrt{2}}\right)^2 \le (1-r)^2$. Therefore, $D_i\subset D^2$ for each $i$. We may check similarly that any two disc centres are separated by a distance at least $2r$. 
     
    By construction, discs 2 and 3 lie on the same horizontal line at all values of $\SS$, while the coordinates of discs 1 and 4 are chosen to satisfy $\ang\circ\SS = \mathrm{id}_S$, so this is indeed a local section to $\ang$. 

    Consider $\{\SS(\phi_2, \phi_3) \mid (\phi_2, \phi_3) \in S\}$. At each corner of $S$, the four disc centres of $\SS(\phi_1, \phi_2)$ form a parallelogram. Along the first edge, $\{(\frac{\pi}{2}- t\pi, \frac{\pi}{2}) \mid t\in [0,1]\}$, we can check from the coordinates that $D_1$ moves half a turn anticlockwise around $D_2$, while the three other discs stay in the same position relative to one another. This causes $D_1$ and $D_2$ to switch positions in the parallelogram; therefore the underlying path in $\Cf_{4}$ traced out by the disc centres along this side is homotopic to the braid $\sigma_1$. Similarly, the remaining sides of $S$ yield the braids $\sigma_3^{-1}$, $\sigma_1^{-1}$, $\sigma_3$. 

    If $\ang(\DD) = 0$, then the four discs lie on a straight line. Furthermore, $[S] \neq 0 \in \pi_{1}(T^2\backslash \{0\})$. Therefore $\mathcal{S}$ represents a non-trivial class in $\pi_1(\Cf_{4,r})$ by Lemma \ref{lem: punc}.
     
    Finally, this homotopy class persists until the second critical radius by Fact \ref{fact: crits}, which is $r=\frac{1}{3}$ (see Prop. \ref{prop: crit4}).
\end{proof}

\subsection{More discs yields a higher-dimensional homotopy class \label{sec: extend}}

It is well-known \cite[Cor. 2.2]{aspher} that the configuration space of $n$ points in any codimension-zero subset of the plane (and in particular the open unit disc) is aspherical -- that is, its homotopy groups in dimension 2 or greater are trivial. In contrast, here we generalise the work of \S\ref{sec: n=4} to demonstrate a non-trivial element of $\pi_{n-3}(\Cf_{n,\frac{1}{n}+\varepsilon})$ for all $n\ge 5$, when $\varepsilon>0$ is sufficiently small. 

\begin{thm} \label{thm: nts}
There is a non-trivial element in $\pi_{n-3}(\Cf_{n,r})$ for all $n\ge 5$ and $\frac{1}{n}<r \le \frac{1}{n-1}$. 
\end{thm}

To prove this theorem, we will construct a sphere of configurations for sufficiently small $r>\frac{1}{n}$. The following lemma is used to prove that the configurations in the constructed sphere lie inside $D^2$.

\begin{lem} \label{lem: fit}
Let $\DD = (D_1, D_2, \ldots, D_n)$ be a configuration of open unit discs such that $D_k$ is in contact with $D_{k+1}$ for all $1\le k\le n-1$ and $\phi_l=\pm\xi$ for some $2\le l\le n-1$, $0<\xi\le \frac{\pi}{2}$. Then $\bigcup \DD \subset B\left(\frac{x_1+x_n}{2}, n-4\sin^2\frac{\xi}{4}\right)$.
\end{lem}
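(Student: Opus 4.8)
The plan is to read the hypothesis as describing a chain of tangent unit discs that is straight apart from a single bend of angle $\xi$ at the centre $x_l$ (that is, $\phi_l=\pm\xi$ and $\phi_i=0$ for $i\neq l$); this one-bend configuration is expected to be the extremal one, and the pairwise non-overlap built into the word ``configuration'' should let one reduce the general case to it. Since every disc has radius $1$, the union $\bigcup\DD$ lies in $B(M,R)$ with $M=\frac{x_1+x_n}{2}$ precisely when $|x_i-M|+1\le R$ for every $i$. So the whole statement reduces to the single estimate $\max_i|x_i-M|\le n-1-4\sin^2\frac{\xi}{4}$.

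First I would locate the farthest centre. Consecutive tangency of unit discs forces $|x_k-x_{k+1}|=2$, so the centres sit on two straight ``arms'' issuing from $x_l$, of lengths $2(l-1)$ and $2(n-l)$, meeting at interior angle $\pi-\xi$. Because $y\mapsto|y-M|$ is convex along each arm, its maximum over the centres is attained at one of $x_1,x_l,x_n$. The endpoints $x_1,x_n$ are each at distance $\frac12|x_1x_n|$ from $M$ (as $M$ is their midpoint), whereas $x_l$ is the apex of triangle $x_1x_lx_n$, where the angle is $\pi-\xi\ge\frac{\pi}{2}$; the median-length identity $|x_l-M|^2=\frac{2|x_lx_1|^2+2|x_lx_n|^2-|x_1x_n|^2}{4}$ together with $|x_1x_n|^2\ge|x_lx_1|^2+|x_lx_n|^2$ then gives $|x_l-M|\le\frac12|x_1x_n|$. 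Hence $\max_i|x_i-M|=\frac12|x_1x_n|$.

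It remains to bound $\frac12|x_1x_n|$. Writing $p=l-1$, $q=n-l$ and applying the law of cosines to the two arms with included angle $\pi-\xi$, together with $1-\cos\xi=2\sin^2\frac{\xi}{2}$, gives
\[ \tfrac12|x_1x_n|=\sqrt{(n-1)^2-4pq\,\sin^2\tfrac{\xi}{2}}. \]
The target $\frac12|x_1x_n|+1\le n-4\sin^2\frac{\xi}{4}$ has positive right-hand side, so I may square it. Using the quarter-angle identities $\sin^2\frac{\xi}{2}=4\sin^2\frac{\xi}{4}\cos^2\frac{\xi}{4}$ and $1-2\sin^2\frac{\xi}{4}=\cos\frac{\xi}{2}$, and the bound $pq\ge(p+q)-1=n-2$ (coming from $(p-1)(q-1)\ge0$, which forces the worst bend to sit at an end, $l\in\{2,n-1\}$), the squared inequality collapses to the manifestly true $(n-3)\cos\frac{\xi}{2}\ge0$. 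Note the $n=3$ case saturates this, matching the equality one sees directly for three discs.

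The routine computation above is elementary; the genuine obstacle is the reduction of the stated (a priori unrestricted) hypothesis to this extremal one-bend case. One must argue that no other distribution of the turning angles $\phi_i$ consistent with the discs being pairwise disjoint pushes some centre farther from $M=\frac{x_1+x_n}{2}$ than the single bend does. The subtlety is that heavy folding shrinks $|x_1x_n|$ and hence drags $M$ toward the ends of the chain, so that an interior disc could in principle bulge out; the control that saves the estimate is exactly the non-overlap of \emph{all} discs (not just consecutive ones), which caps how tightly the chain may curl (for instance $|x_{i-1}-x_{i+1}|=4\cos\frac{\phi_i}{2}\ge 2$ bounds each turn). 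Making this monotonicity-of-extent-under-bending argument precise, uniformly in $n$ and in the bend positions, is where I would expect to spend the real effort; the half-angle bookkeeping and the clean final inequality $(n-3)\cos\frac{\xi}{2}\ge0$ are then comparatively mechanical.
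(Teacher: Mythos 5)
There is a genuine gap, and you have correctly identified where it sits: your argument only covers the special case $\phi_i=0$ for all $i\neq l$, and the reduction of the general statement to that case --- which you defer to an unspecified ``monotonicity-of-extent-under-bending'' argument --- is never carried out. Worse, the reduction you propose aims at a false target: the one-bend configuration is \emph{not} extremal. The configuration that actually saturates the bound (see Remark \ref{rem: sharp}) is the one in which all centres \emph{except} $x_l$ are collinear; it has bends at $x_{l-1}$, $x_l$ and $x_{l+1}$, and its extent $\tfrac12|x_1-x_n| = n-3+2\cos\frac{\xi}{2} = n-1-4\sin^2\frac{\xi}{4}$ strictly exceeds your one-bend value $\sqrt{(n-1)^2-4pq\sin^2\frac{\xi}{2}}$ whenever $n>3$ --- indeed your own final inequality $(n-3)\cos\frac{\xi}{2}\ge 0$ is precisely the statement that the one-bend value falls short of the stated bound, with slack unless $n=3$. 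So the claim that ``no other distribution of the turning angles pushes some centre farther from $M$ than the single bend does'' is false, and a proof organised around it cannot be completed. Your suggestion that non-overlap of non-consecutive discs supplies the missing control is also a red herring: the paper's argument never uses it.

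The paper's proof needs no extremality reduction at all. For $i\neq l$ it bounds $|x_i - \frac{x_1+x_n}{2}| \le \frac12(|x_1-x_i|+|x_i-x_n|)$ and estimates each chain length by summing the links $|x_k-x_{k+1}|=2$, except that the passage through the bend is shortcut by the single chord $|x_{l-1}-x_{l+1}|=4\cos\frac{\xi}{2}$; this already yields $n-3+2\cos\frac{\xi}{2}$ for \emph{arbitrary} values of the other angles. The case $i=l$ is handled by a separate coordinate computation giving $|x_l - z| \le n-3+2\sin\frac{\xi}{2} \le n-3+2\cos\frac{\xi}{2}$, using $\xi\le\frac{\pi}{2}$. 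Your computation of the one-bend case is correct as far as it goes (the median inequality and the half-angle bookkeeping check out), but it establishes only a strictly special case of the lemma.
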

\begin{proof}
    Let $z=\frac{x_1+x_n}{2}$. The claim is equivalent to proving $x_i\in B\left(z, n-4\sin^2\frac{\xi}{4}-1\right)$, or equivalently $|z-x_i| \le n - 4\sin^2\frac{\xi}{4} - 1 = n - 3 + 2\cos\frac{\xi}{2}$, for all $i$. We have two cases: 
    
    $\bm{i\neq l}$: By the triangle inequality, $|z-x_i| \le \frac{1}{2} \left(|x_1-x_i|+|x_i-x_n| \right)$. Then, assuming (by reversing the labelling if necessary) that $i>l$, we have
    \begin{IEEEeqnarray*}{rCl}
    |x_1-x_i| &\le& \sum_{k=1}^{l-2}|x_k-x_{k+1}| + |x_{l-1}-x_{l+1}| + \sum_{k=l+1}^{i-1}|x_k-x_{k+1}| \\
    &=& 2(l-2) + 4\cos\frac{\xi}{2} + 2(i-1-l) \\
    &=& 2(i-3) + 4\cos\frac{\xi}{2} \\
    |x_i-x_n| &\le& \sum_{k=i}^{n-1}|x_k-x_{k+1}| \\
    &=& 2(n-i)
    \end{IEEEeqnarray*}
    Thus $|z-x_i| \le n-3+2\cos\frac{\xi}{2}$, as required. 
    
    $\bm{i=l}$: First, consider our configuration placed in the plane with $x_{l-1}$ and $x_{l+1}$ equidistant from the origin along the $x$-axis. Then (up to reflection) $x_{l-1} = (-c, 0)$, $x_{l} = (0, -s)$, $x_{l+1} = (c, 0)$, where $c=2\cos\frac{\xi}{2}$ and $s=2\sin\frac{\xi}{2}$. As in the first case, we have $|x_1-x_{l-1}| \le \sum_{i=1}^{l-2} |x_i - x_{i+1}| = 2(l-2)$, $|x_n-x_{l+1}| \le \sum_{i=l+1}^{n-1} |x_i - x_{i+1}| = 2(n-l-1)$, so there are $A\in [0,l-2]$, $B\in [0,n-l-1]$, and angles $\alpha, \beta$ such that $x_1 = (-c-2A\sin\alpha, 2A\cos\alpha)$ and $x_n = (c+2B\sin\beta, 2B\cos\beta)$. Then $z - x_l = (B\sin\beta - A\sin\alpha, A\cos\alpha + B\cos\beta + s)$. Therefore
    \begin{IEEEeqnarray*}{rCl}
        |z-x_l|^2 &=& (B\sin\beta - A\sin\alpha)^2 + (A\cos\alpha + B\cos\beta + s)^2 \\
        &=& B^2\sin^2\beta - 2AB\sin\alpha\sin\beta + A^2\sin^2\alpha \\ 
        && + \ B^2\cos^2\beta + 2AB\cos\alpha\cos\beta + A^2\cos^2\alpha + s^2 + 2sA\cos\alpha + 2sB\cos\beta \\
        &=& B^2 + 2AB\cos(\alpha+\beta) + A^2 + s^2 + 2sA\cos\alpha + 2sB\cos\beta \\
        &\le& B^2 + 2AB + A^2 + s^2 + 2sA + 2sB \\
        &=& (A+B+s)^2 \\
        &\le& \left(n-3+2\sin\frac{\xi}{2}\right)^2
    \end{IEEEeqnarray*}
    We complete our proof by noting that $\sin\frac{\xi}{2} \le \cos\frac{\xi}{2}$. 
\end{proof}
\begin{rem} \label{rem: sharp}
This maximum is sharp, as it is attained by discs $1$ and $n$ in the configuration in which all discs except $l$ are collinear.
\end{rem}

\begin{proof}[Proof of Theorem \ref{thm: nts}]
First, consider the case $\frac{1}{n} < r \le \frac{1}{n-4\sin^2(\frac{\pi}{2n})}$. We take the $(n-3)$-sphere $S=\partial\left[ -\frac{2\pi}{n}, \frac{2\pi}{n} \right]^{n-2}$, and claim that the following algorithm defines a local section $\mathcal{S}\colon S \to \Cf_{n,r}$. We construct the configuration $(D_1, \ldots, D_n) \coloneqq \mathcal{S}(\phi_2, \ldots, \phi_{n-1})$ as follows:
\begin{enumerate}
    \item Place $D_1$ centred at the origin of the plane and $D_2$ in contact with it, centred at $(2r, 0)$.
    \item Place each subsequent disc $D_i$ in contact with $D_{i-1}$, such that $x_i$ lies on the ray from $x_{i-1}$ at angle $\phi_{i-1}$.
    \item Translate all discs by $-\frac{x_1+x_n}{2}$.
\end{enumerate}
\begin{figure}[htbp]
    \begin{center}
        \begin{tikzpicture}
        \begin{scope}[scale=1.5];
            \draw[color=black] (0,0.3) circle [radius=0.5] node[circle, inner sep = 0, minimum size = 6, fill=black, label=left:1] {};
            \draw[color=black] (1,0.3) circle [radius=0.5] node[circle, inner sep = 0, minimum size = 6, fill=black, label=above:2] {};
            \node[rotate=18] at (1.8,0.54) {$\ldots$};
            \draw[color=black] (2.68,0.99) circle [radius=0.5] node[circle, inner sep = 0, minimum size = 6, fill=black, label=above:$i-2$] {};
            \draw[color=black] (3.66,1.17) circle [radius=0.5] node[circle, inner sep = 0, minimum size = 6, fill=black, label=above:$i-1$] {};
            \draw[color=black, fill=black!10] (4.6,0.82) circle [radius=0.5] node[circle, inner sep = 0, minimum size = 6, fill=black, label=right:$i$] {};  
            \draw (0,0.3) -- (1,0.3) -- (1.6,0.48);
            \draw (2.02, 0.65) -- (2.68,0.99) -- (3.66,1.17) -- (4.6,0.82);
            \draw[densely dashed] (3.66,1.17) -- (5.52, 1.53);
            \draw[very thick,domain=-20.4:10.4]  plot ({3.66+0.3*cos(\x)}, {1.17+0.3*sin(\x)}) node[inner sep = 0, below left= 0.25 and -0.2] {$\phi_{i-1}$};
            \end{scope};
        \end{tikzpicture}
    \end{center}
        \caption{The placement of the $i$-th disc in the construction of the configurations of Theorem \ref{thm: nts}.}
        \label{fig: disc i}
    \end{figure}
    
$\ang\circ\SS = \mathrm{id}_S$ by construction (step 2). The placement of $D_i$ at step 2 is continuous with respect to $\phi_i$, and the translation in step 3 is continuous with respect to $x_1$ and $x_n$, so $\SS$ is continuous. Thus, we prove our claim if we show that $\SS(\Phi) \in \Cf_{n,r}$ for all $\Phi\in S$. 

At the placement of $D_i$ in step 2, consider the two regular $n$-gons which have $\overline{x_{i-1}x_i}$ as one of their sides. Since $|\phi_k|\le \frac{2\pi}{n}$, the external angle of a regular $n$-gon, for all $k$, the sequence of edges \[x_1 \longrightarrow x_2 \longrightarrow \ldots \longrightarrow x_{i-1} \longrightarrow x_i \] lies outside or on the boundary of these two $n$-gons. Thus, since $i-1<n$, $D_i\cap D_j = \emptyset$ for $j\in\{1,2,\ldots,i-1\}$. Therefore no two discs in $\SS(\phi_2, \ldots, \phi_{n-1})$ overlap. Since every point in $S$ has at least one coordinate equal to $\pm \frac{2\pi}{n}$ and $D_{i+1}$ touches $D_i$ for all $i$, we can apply Lemma \ref{lem: fit} with $\xi=\frac{2\pi}{n}$ to show that $\bigcup \SS(\phi_2, \ldots, \phi_{n-1}) \subset B\left(0, \left(n-4\sin^2\frac{\pi}{2n}\right)r\right) \subset D^2$, which completes the proof of our claim. 

Next, we note that if $\ang(\DD)=0$, then the $n$ discs lie on one straight line. Moreover, $[S] \neq 0 \in \pi_{n-3}(T^{n-2}\backslash \{0\})$. Therefore we may apply Lemma \ref{lem: punc} to show that $\SS$ represents a non-trivial class in $\pi_{n-3}(\Cf_{n,r})$.

Finally, this non-trivial class persists up to the second critical radius by Fact \ref{fact: crits}. This is $\frac{1}{n-1}$ -- when $n\in \{4,5\}$, this is Prop. \ref{prop: crit4} and \ref{prop: crit5}, while for $n\ge 6$, we note that $\frac{3}{2n+3} \ge \frac{1}{n-1}$, and so this must be the second critical radius by Fact \ref{fact: critnd}.
\end{proof}

\begin{rem}
    While we are able to extend the existence of a homotopic sphere in $\Cf_{n,r}$ to radii up to $r=\frac{1}{n-1}$ in the proof of Thm. \ref{thm: nts}, Remark \ref{rem: sharp} shows this will not project to $\partial \left( [-\frac{2\pi}{n}, \frac{2\pi}{n}]^{n-2}\right)$ for radii above $\frac{1}{n-4\sin^2(\frac{\pi}{2n})}$: for these radii, any configuration with angles $(0, \ldots, 0, -\frac{\pi}{n}, \frac{2\pi}{n}, -\frac{\pi}{n}, 0, \ldots, 0)$ has diameter greater than 2, so does not lie in the unit disc.
\end{rem}
\begin{rem} \label{rem: coord}
    This algorithm allows us to construct explicit coordinates for our configuration in terms of $\theta_i = \sum_{j=1}^i \phi_j$. At step 2, $x_i = 2r(\sum_{j=1}^{i-1} \cos\theta_j, \sum_{j=1}^{i-1} \sin\theta_j)$ for $i\ge 2$, with $\theta_1 = 0$. After translation by $-\frac{1}{2}(x_1+x_n) = -r(\sum_{j=1}^{n-1} \cos\theta_j, \sum_{j=1}^{n-1} \sin\theta_j)$, we have $x_i = r (\sum_{j=1}^{i-1} \cos\theta_j - \sum_{j=i}^{n-1} \cos\theta_j, \sum_{j=1}^{i-1} \sin\theta_j - \sum_{j=i}^{n-1} \sin\theta_j)$.
\end{rem}
\begin{rem} \label{rem: thin}
    We can construct a sphere of the same homotopy class by lifting $S=\partial \left( [-\xi, \xi]^{n-2}\right)$ to $\Cf_{n,r}$ by the same algorithm for all $0<\xi\le \frac{2\pi}{n}$ and $\frac{1}{n} < r \le \frac{1}{n-4\sin^2(\frac{\xi}{4})}$. Then, by Remark \ref{rem: coord}, $x_i \to \left( \frac{2i-n-1}{n}, 0 \right)$ as $\xi \to 0$ for all configurations of the sphere. That is to say, the configurations of the sphere are constrained increasingly close to the critical configuration.
\end{rem}
\begin{rem} \label{rem: more}
    This construction gives rise to many distinct homotopy classes in  $\pi_{n-3}(\Cf_{n,r})$ for $\frac{1}{n}< r \le \frac{1}{n-1}$. Given $r \le \frac{1}{n-4\sin^2 \frac{\pi}{2n}}$, take $\perm\in S_n$, and let $\SS'=\perm\cdot\SS$, where the action of $\perm$ on $\Cf_{n,r}$ permutes the discs in the natural way. It can be shown that there is a path $\gamma\colon [0,1]\to \Cf_{n,r}$ such that $\gamma(0) \in \SS$ and $\gamma(1) = \perm\cdot \gamma(0)$, so we can consider $[\SS']$ in the same homotopy group as $[\SS]$. Then $\ang(\SS')$ encircles some $0\neq \Phi \in \{(\phi_2, \ldots, \phi_{n-1})\in T^{n-2} \mid \forall_i \ \phi_i\in \{0,\pi\}\}$, where $\Phi$ corresponds to $n$ discs lying on a line. Hence $\ang_*([\SS']) \neq [S]$, so $[\SS'] \neq [\SS]$.
\end{rem}

\section{The full homotopy type for four discs} \label{chap: 4}

In \S\ref{sec: n=4}, we determine the homotopy type of $\Cf_{4,r}$ for $r\le \frac{1}{4}$ and $r> \frac{1}{3}$. We also show that there exists a non-trivial `thick braid' in $\pi_1(\Cf_{4, r})$ for $\frac{1}{4} < r \le \frac{1}{3}$ which vanishes under the inclusion $\Cf_{n,r} \to \Cf_n$. In this section, we develop a full picture of the homotopy type of $\Cf_{4,r}$ when $\frac{1}{4}<r \le\frac{1}{3}$. We start by noting that $\Cf_{4,r} \simeq \Cf_{4, \frac{1}{3}} = \tau^{-1}[\frac{1}{3}, \infty)$ by Fact \ref{fact: crits}. 

Let $Y$ be the 1-dimensional cell complex consisting of
\begin{itemize}
        \item a set, $V_1$, of 0-cells, indexed over the 6 vertices of an octahedron,
        \item a set, $V_2$, of 0-cells, indexed over the 8 faces of an octahedron, and
        \item a set, $E$, of 1-cells, in which $v_1 \in V_1$ and $v_2 \in V_2$ bound a 1-cell if and only if the vertex corresponding to $v_1$ is on the boundary of the face corresponding to $v_2$.
\end{itemize} 
We will prove the following.

\begin{thm} \label{thm: graph2}
    Let $r\in \left( \frac{1}{4}, \frac{1}{3} \right]$. Then $\Cf_{4, r} \simeq Y\times S^1$.
\end{thm}

We will prove that $\Cf_{4, \frac{1}{3}}$ is a trivial circle bundle (a topological space of the form $X\times S^1$ for some space $X$) in \S\ref{sec: triv}. Then, we will show how the quotient $\Cf_{4, \frac{1}{3}}/S^1$ sits naturally on the surface of an octahedron in \S\ref{sec: struc}. Finally, we will retract $\Cf_{4, \frac{1}{3}} / S^1$ onto $Y$ in \S\ref{sec: contr}. 

Throughout this section, we will denote an ordering of $m$ numbers from the set $\{1, 2, \ldots, n\}$ by $(i_1\ i_2\ \ldots\ i_m)$, and a cyclic ordering by $[i_1\ i_2\ \ldots\ i_m]$, where $i_1, i_2, \ldots, i_m$ are distinct elements of $\{1, 2, \ldots, n\}$. Then for a cyclic ordering $\perm$, denote  $\hat{\perm}_{i_k} \coloneqq (i_{k+1}\ \ldots\ i_m\ i_1\ \ldots\ i_{k-1})$ for any $1\le k\le m$. For an ordering $\rho$, we will denote the corresponding cyclic ordering (where we forget the start point) by $[\rho]$. Furthermore, we will sometimes borrow from the language of the permutation group and write $i_{k+1} = \perm(i_k)$ for $1\le k \le m-1$ (and $i_1 = \perm(i_m)$ in the case that $\perm$ is a cyclic ordering).

We also, given distinct points $P,Q,R$, denote by $\angle PQR$ the anti-clockwise angle (taken in $[0,2\pi)$) from the line segment $\overline{QP}$ to the line segment $\overline{QR}$ about $Q$. This means that $\angle PQR + \angle RQP = 2\pi$. Sometimes, we will be interested in the smaller of the two angles $\angle PQR, \angle RQP$, in which case we will write $|\angle PQR| = \min \{ \angle PQR, \angle RQP \}$.

The proof will also require the following claims about configurations of discs.

\begin{lem} \label{lem: flow dist}
    Take $R\in(0,\infty)$, and let $x,y\in \overline{B(0,R)}$ be such that $|x-y|\ge R$. Then for all $\lambda, \mu \in \left[1, \infty \right)$, we have $|\lambda x - \mu y| \ge R$. 
\end{lem}
\begin{proof}
    The hypotheses of the lemma are equivalent to the claim $x\in \overline{B(0,R)} \backslash B(y,R)$. Take $t\in [1, \lambda]$. Since $x$ lies on the line segment joining 0 and $\lambda x$, with $0\in \overline{B(y,R)}$ and $x\notin B(y,R)$, it follows that $tx \notin B(y,R)$ by convexity of the ball. Therefore $y\notin \bigcup_{t\in[1, \lambda]} B(tx, R)$. Since $0\in \bigcup_{t\in[1, \lambda]} B(tx, R)$, it follows by convexity that $\mu y \notin \bigcup_{t\in[1, \lambda]} B(tx, R)$; in particular, $|\lambda x - \mu y| \ge R$.
\end{proof}

\begin{lem} \label{lem: angle}
    Let $\DD \in \Cf_{n, \frac{1}{3}}$ for some $n\in\NN$, $x,y\ne 0$ be the centres of some discs in $\DD$. Then $\angle x0y \ge \frac{\pi}{3}$.
\end{lem}
\begin{proof}
    Let $x,y \neq 0$ be the centres of two discs in $\DD$. Then $|x-y| \ge \frac{2}{3}$ and $x,y \in \overline{B \left( 0, \frac{2}{3} \right)}$. Let $\hat{x} = \frac{2}{3|x|}x$, $\hat{y} = \frac{2}{3|y|}y$, from which $|\hat{x}| = |\hat{y}| = \frac{2}{3}$ and $\angle \hat{x} 0 \hat{y} = \angle x0y$. Then by Lemma \ref{lem: flow dist}, we have $\left| \hat{x} - \hat{y} \right| \ge \frac{2}{3}$. At equality, we have the triangle with vertices  $0, \hat{x}, \hat{y}$ is equilateral; thus $\angle \hat{x} 0 \hat{y} \ge \frac{\pi}{3}$.
\end{proof}

As one disc approaches the origin, however, we can strengthen this claim to show that the angle of separation to another disc must be at least $\frac{\pi}{2}$ in the limit. 

\begin{lem} \label{lem: angle at zero}
    Let $\DD \in \Cf_{n, \frac{1}{3}}$ for some $n\in\NN$, and let $x,y \ne 0$ be the centres of some discs in $\DD$. Then $\angle x0y \ge \arccos \left( \frac{3}{4}|x| \right)$. 
\end{lem}
\begin{proof}
    Let $\theta = \angle x0y$ be the angle subtended at the origin by $x$ and $y$. Denote $a=|x|, b=|y|, c=|x-y|$. Then $c^2 = a^2+b^2-2ab\cos(\theta)$ by the cosine rule. We further note $c^2 \ge \frac{4}{9}$.
    
    By Lemma \ref{lem: angle}, we have $\frac{\pi}{3} \le \theta \le \frac{5\pi}{3}$, from which we see that $2a \cos( \theta) \le a \le \frac{2}{3} + b$. Now,
    \begin{IEEEeqnarray*}{rClCrCl}
        2a \cos( \theta) & \le & \frac{2}{3} + b &
        \Rightarrow & 2a \left( \frac{2}{3} - b \right) \cos( \theta) & \le & \frac{4}{9} - b^2 \\
        &&& \Rightarrow & b^2 - 2ab \cos( \theta) & \le & \frac{4}{9} - \frac{4}{3}a \cos \theta \\
        &&& \Rightarrow & a^2 + b^2 - 2ab \cos( \theta) & \le & a^2 + \frac{4}{9} - \frac{4}{3}a \cos \theta \quad .
    \end{IEEEeqnarray*}
    Therefore, we have shown that \[ \frac{4}{9} \le a^2 + \frac{4}{9} - \frac{4}{3}a \cos \theta \quad . \]
    Thus $\cos( \theta) \le \frac{3}{4}a$, as required.    
\end{proof}

\subsection{The configuration space as a trivial circle bundle} \label{sec: triv} 

Let $n\ge2$, and consider the free continuous $S^1$-action on $\Cf_{n,r}$ given by rotating a configuration about the origin, resulting from the rotational symmetry of $D^2$. Take the projection $q \colon \Cf_{n.r} \to \qCf_{n,r} \coloneqq \Cf_{n,r}/S_1$, which is a principal $S^1$-bundle, and choose any section $s\colon \qCf_{n,r} \to \Cf_{n,r}$. Let $\Theta \colon \Cf_{n,r} \to S^1$ map each $\DD\in \Cf_{n,r}$ to the unique $\theta \in S^1$ such that $\DD = \theta \cdot s[\DD]$. This yields the following.

\begin{prop} \label{prop: triv}
    $p \colon \Cf_{n,r} \to \qCf_{n,r} \times S^1, \ \DD \mapsto \left( q(\DD), \Theta (\DD) \right)$ is a homeomorphism with inverse $\left( \bar{\DD}, \theta \right) \mapsto \theta \cdot s\left( \bar{\DD} \right)$. Furthermore, $s$ is a homeomorphism onto its image.
\end{prop}

Here, we will use the section mapping each $\bar{\DD}\in \qCf_{n,r}$ to the unique representative in which $x_3-x_2$ points in the positive $x$-direction (i.e. $x_3-x_2 = \left(|x_3-x_2| ,0 \right)$ in Cartesian coordinates). Then $\Theta(\DD)$ is the anticlockwise angle from the vector $(1,0)$ to the vector $x_3-x_2$.

\subsection{The geometric structure of the quotient space} \label{sec: struc}

Prop. \ref{prop: triv} shows that $\qCf_{4,\frac{1}{3}}$ has the same homotopy-type as $s\left( \qCf_{4, \frac{1}{3}} \right) = \{\DD \in \Cf_{4,\frac{1}{3}} \mid \Theta (\DD) = 0 \}$.  Let $\bar{\tau} = \tau|_{s\left( \qCf_n \right)}$ for all $n\in \NN$. Then $s\left( \qCf_{4, \frac{1}{3}} \right) = \bar{\tau} ^{-1} \left[ \frac{1}{3}, \infty \right)$.

If $x\in \Cf_4$ has some point at the origin, then $\tau(x) \le \frac{1}{3}$. Therefore, we may begin by writing $\bar{ \tau}^{-1} \left[ \frac{1}{3}, \infty \right) = A\cup B\cup C$, where 
\begin{align*}
    A & = \left\{x \in \bar{ \tau}^{-1}\left[ \frac{1}{3}, \infty \right) \colon \ \forall_j \ |x_j| > 0 \right\}, \\
    B & = \left\{x \in \bar{ \tau}^{-1}\left( \frac{1}{3} \right) \colon \ \exists_i \ |x_i| = 0 \textrm{ and all } x_j \textrm{ lie in some closed semidisc} \right\} \textrm{, and} \\
    C & = \left\{x \in \bar{ \tau}^{-1}\left( \frac{1}{3} \right) \colon \ \exists_i \ |x_i| = 0 \textrm{ and no open semidisc contains all } x_j\ne 0 \right\}.
\end{align*}

Moreover, in $\bar{ \tau}^{-1} \left[ \frac{1}{3}, \infty \right)$, no two points can lie on the same radius of the unit disc unless one is at the origin; then in $A$, the four points have a well-defined cyclic ordering $\perm$ (taken anticlockwise) about the origin, so we may write $A = \bigsqcup_\perm A_\perm$.  Similarly, $B = \bigsqcup_{(i,\rho)} B_{i,\rho}$, where $i\in \{1,2,3,4\}$ is the index of the point at the origin and $\rho$ is an ordering of the remaining three points (this is not a cyclic ordering, since the empty open semi-disc gives a well-defined starting point), and $C = \bigsqcup_{(i,\rho')} C_{i,\rho'}$, where in this case $\rho'$ is a cyclic ordering of the remaining three points. 

We also define $S_\perm = A_\perm \cup \bigcup_{i\in \{1,2,3,4\}} B_{i, \hat{\perm}_i}$.

\begin{prop} \label{prop: structure}
    $S_\perm$ is the closure of $A_\perm$ in $\bar{ \tau}^{-1} \left[ \frac{1}{3}, \infty \right)$.
\end{prop}
\begin{proof}
    Let $(x^m)_{m\in \NN}$ be a sequence in $A_\perm$ converging to some $x \in \bar{ \tau}^{-1} \left[ \frac{1}{3}, \infty \right)$. In particular, $x^m_i \to x_i$ for each $i\in \{1,2,3,4\}$. If there is no $i$ such that $x_i = 0$, then  $x \in A_\perm$.
    
    Otherwise suppose $x_i = 0$ for some $i\in \{1,2,3,4\}$, and write $\perm = [i\ i_1\ i_2\ i_3]$. Take $\varepsilon>0$.  Then for all sufficiently large $m$, we have $|x^m_j - x_j| < \varepsilon$ for all $j\in \{1,2,3,4\}$. Therefore, we can choose some $\delta>0$, where $\delta\to 0$ as $\varepsilon \to0$, such that $|\angle x_{j} 0 x^m_{j}| < \delta$ for all $j\ne i$. Combining this with Cor. \ref{lem: angle at zero}, we have 
    \[\angle x^m_i 0 x_{i_1} \ge \angle x^m_i 0 x^m_{i_1} - |\angle x_{i_1} 0 x^m_{i_1}| \ge \arccos \frac{3}{4} |x^m_i| - \delta \]
    and similarly $\angle x_{i_3} 0 x^m_{i} \ge \arccos \frac{3}{4} |x^m_i| - \delta$.
    
    The radii through $x_{i_1}, x_{i_2}, x_{i_3}$ split the unit disc into three sectors. Since $|\angle x_{j} 0 x^m_{j}| < \delta$ for all $j\ne i$, the points $x_{i_1}, x_{i_2}, x_{i_3}$ retain the cyclic order $[i_1\ i_2\ i_3]$, and $x^m_i$ is in the sector between $x_{i_3}$ and $x_{i_1}$. This sector has angular extent \[\angle x_{i_3} 0 x^m_{i} + \angle x^m_i 0 x_{i_1} \ge 2 \left(\arccos \frac{3}{4} |x^m_i| - \delta \right) \to \pi \qquad \textrm{as } m\to \infty \quad .\] Therefore, $x\in B_{i, \hat{\perm}_i}$.

    Conversely, every $x\in A_\perm$ is the limit of a constant sequence in $A_\perm$, and every $x\in B_{i, \hat{\perm}_i}$ is the limit of the sequence $(x^m)_{m\in \NN}$ in $A_\perm$ given by $x^m_j = x_j$ for all $m$ and all $j\ne i$, and $x_i \to 0$ along the radius which bisects the largest sector not containing any other $x_j$.
\end{proof}

\begin{prop} \label{prop: structure2}
    $C_{i,\rho'}$ is parametrised by the hexagon \[\left\{ (\phi_1, \phi_2) \in \RR^2 \colon \frac{\pi}{3} \le \phi_1 \le \pi, \frac{\pi}{3} \le \phi_2 \le \pi, \pi \le \phi_1 + \phi_2 \le \frac{5\pi}{3} \right\} \quad .\]
    $S_\perm \cap C_{i, \rho'}$ is one of the edges $\phi_1 = \pi$, $\phi_2 = \pi$, $\phi_1 + \phi_2 = \pi$ if $\rho' = [\hat{\perm}_i]$; otherwise, it is empty.
\end{prop}

\begin{figure}[htbp]
    \begin{center}
        \begin{tikzpicture}
            \draw (0,0) circle [radius=3];
        \begin{scope}[scale=1.3, rotate = 139.2];
            \draw[color=black!50] (0,0) circle [radius=0.67];
            \draw[color=black!50] (1.33,0) circle [radius=0.67];
            \draw[color=black!50] (-0.2, -1.32) circle [radius=0.67];
            \draw[color=black!50] (-1.32, 0.2) circle [radius=0.67];
            \node[circle, inner sep = 0, minimum size = 4, fill=black, label = above:$x_2$] at (1.33,0) {};
            \node[circle, inner sep = 0, minimum size = 4, fill=black, label = right:$x_1$] at (0, 0) {};
            \node[circle, inner sep = 0, minimum size = 4, fill=black, label = above left:$x_3$] at (-0.2, -1.32) {};
            \node[circle, inner sep = 0, minimum size = 4, fill=black, label = above right:$x_4$] at (-1.32, 0.2) {};
            \draw (0,0) -- (2,0);
            \draw (0,0) -- (-0.3, -1.98);
            \draw (0,0) -- (-1.98, 0.3);
            \draw (0.5,0) arc (0:-98.6:0.5);
            \draw (0.4,0) arc (0:171.4:0.4);
            \node at (0,0.6) {$\phi_1$};
            \node at (0.5,-0.5) {$\phi_2$};
        \end{scope};
        \begin{scope}[scale=0.65, shift = {(7,-3.5)}];
            \draw[very thick, ->] (-1,0) -- (8,0);
            \draw[very thick, ->] (0,-1) -- (0,8);
            \node at (8.3,0) {$\phi_1$};
            \node at (0,8.3) {$\phi_2$};
            \draw[pattern=dots, pattern color=blue!60!black, draw=black] (2*pi,2*pi/3) -- (2*pi,4*pi/3) -- (4*pi/3,2*pi) -- (2*pi/3,2*pi) -- (2*pi/3,4*pi/3) --  (4*pi/3,2*pi/3) -- cycle;
            \draw[dashed, thin] (-1,2*pi/3) -- (4*pi/3,2*pi/3) -- (2*pi+1,-1); 
            \draw[dashed, thin] (2*pi/3,-1) -- (2*pi/3,4*pi/3) -- (-1, 2*pi+1);
            \draw[dashed, thin] (-1,2*pi) -- (2*pi/3,2*pi) -- (2*pi/3,8);
            \draw[dashed, thin] (2*pi,-1) -- (2*pi,2*pi/3) -- (8, 2*pi/3);
            \draw[dashed, thin] (2*pi,8) -- (2*pi,4*pi/3) -- (8, 10*pi/3 - 8);
            \draw[dashed, thin] (8, 2*pi) -- (4*pi/3, 2*pi) -- (10*pi/3-8, 8);
            \node at (2*pi/3-0.3, -0.4) {$\frac{\pi}{3}$};
            \node at (-0.3, 2*pi/3-0.4) {$\frac{\pi}{3}$};
            \node at (2*pi-0.3, -0.3) {$\pi$};
            \node at (-0.3, 2*pi-0.3) {$\pi$};
            \node at (9,3.7) {$\phi_1+\phi_2=\frac{5\pi}{3}$};
            \draw[red, line width = 0.7mm] (2*pi,2*pi/3) -- (2*pi,4*pi/3);
            \draw[red, line width = 0.7mm] (2*pi/3,2*pi) -- (4*pi/3,2*pi);
            \draw[red, line width = 0.7mm] (2*pi/3,4*pi/3) -- (4*pi/3,2*pi/3);
        \end{scope};
        \end{tikzpicture}
    \end{center}
    \caption{Left: A configuration in one of the connected components, $C_{1, [2\ 4\ 3]}$, of the critical locus of $\tau$ at $r= \frac{1}{3}$. Right: A coordinate parametrisation of $C_{1, [2\ 4\ 3]}$, using the angles shown on the configuration, given by the blue dotted area including boundary. The thick red line segments denote the intersection with the pieces $S_\perm$.}
    \label{fig: C-struct} 
\end{figure} 

\begin{proof}
    Let $\rho' = [j\ k\ l]$. Then let $\hat{x} = (1,0)$, $\phi_1 = \angle x_j 0 x_k$, $\phi_2 = \angle x_l 0 x_j$. A configuration in $\{x\in \tau^{-1} \left[ \frac{1}{3}, \infty \right) \mid \exists_i |x_i|=0 \textrm{ and no open semidisc contains all } x_j\ne 0\}$ is uniquely determined by the triple $(\angle \hat{x} 0 x_j, \phi_1, \phi_2)$. Furthermore, $\angle \hat{x} 0 x_j$ is uniquely determined by the pair $(\phi_1, \phi_2)$ and the condition $\Theta(x) = 0$. Thus, a configuration in $C_{i,\rho'}$ is uniquely determined by $(\phi_1, \phi_2)$.
    
    $\phi_1, \phi_2 \ge \frac{\pi}{3}$ and $\phi_1 + \phi_2 \le \frac{5\pi}{3}$ by Cor. \ref{lem: angle}. $\phi_1, \phi_2 \le \pi$ and $\phi_1 + \phi_2 \ge \pi$ by the condition that the remaining points are not contained in an open semidisc. When the three latter inequalities are all strict, then $x\notin A\cup B$. On the other hand, if $\phi_1 = \pi$, then $x_j, x_i, x_k$ lie on some common diameter, so $x\in B_{i, (k\ l\ j)} \subset S_{[i\ k\ l\ j]}$. Similarly, $x\in B_{i, (j\ k\ l)} \subset S_{[i\ j\ k\ l]}$ if $\phi_2=\pi$, and $x\in B_{i, (l\ j\ k)} \subset S_{[i\ l\ j\ k]}$ if $\phi_1 + \phi_2 = \pi$. 
\end{proof}

Thus, $\qCf_{4, \frac{1}{3}}$ consists of the six pieces $S_\perm$ and the eight hexagonal pieces $C_{i, \rho'}$. By the arguments of Prop. \ref{prop: structure2}, $S_\perm$ is connected to each of the four pieces $C_{i, \rho'}$ where $i\in \{1,2,3,4\}$ and $\rho'$ is the cyclic ordering $\hat{\perm_i}$ obtained by removing $i$ from $\perm$. Geometrically, $\hat{\perm_i}$ is the cyclic ordering of the remaining points when $x_i$ flows to the origin. Conversely, each $C_{i, \rho'}$ is connected to three pieces $S_\perm$, depending on which position you insert $i$ into $\rho'$; and geometrically, the insertion of $i$ between $j$ and $k$ inside $\rho'$ corresponds to moving the outer discs until the empty arc between $x_j$ and $x_k$ is at least $\pi$ radians, and then flowing $x_i$ orthogonally to $x_j-x_k$ into this empty region. Each intersection between some $S_\perm$ and some $C_{i, \rho'}$ is a submersion of an interval. This is depicted in Fig. \ref{fig: struct}.

\begin{figure}[htbp]
    \centering
    \includegraphics[width=0.35\linewidth]{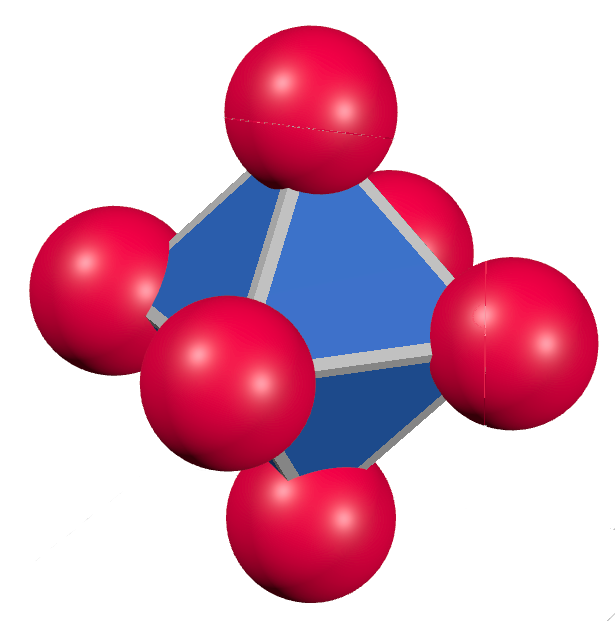}
    \begin{tikzpicture}[radius=0.9]
    	\begin{scope};
        \draw[dashed] (0,0) circle node {\footnotesize $[1\ 3\ 4\ 2]$};
        \draw (0,3) circle node{\footnotesize $[1\ 4\ 3\ 2]$};
        \draw (0,6) circle node {\footnotesize $[1\ 4\ 2\ 3]$};
        \draw[dash dot dot] (2.6,1.5) circle node {\footnotesize $[1\ 3\ 2\ 4]$};
        \draw (2.6,4.5) circle node {\footnotesize $[1\ 2\ 4\ 3]$};
        \draw (2.6,7.5) circle node {\footnotesize $[1\ 2\ 3\ 4]$};
        \draw (0,0.9) -- node[left] {$a$} (0,2.1);
        \draw (0,3.9) -- (0,5.1);
        \draw (0,6.9) -- node[left] {$d$} (0,8.1);
        \draw[dashed] (0,8.1) arc[start angle = -90, end angle = -30] node[inner sep = 0] (4) {};
        \draw (4) -- node[above] {$c$} (1.8,7.9);
        \draw (-0.8, 5.5) -- node[above] {$d$} (-1.8,5);
        \draw[dashed] (-1.8,5) arc[start angle = 30, end angle = -30] node[inner sep = 0] (3) {};
        \draw (3) -- node[below] {$a$} (-0.8, 3.5);
        \draw (2.6, 2.4) -- node[right] {$b$} (2.6, 3.6);
        \draw (2.6,5.4) -- (2.6, 6.6);
        \draw (0.8,0.5) -- node[below] {$e$} (1.8,1);
        \draw (0.8,3.5) -- (1.8,4);
        \draw (0.8,6.5) -- (1.8,7);
        \draw (0.8,2.5) -- (1.8,2);
        \draw (0.8,5.5) -- (1.8,5);
        \draw (3.4,7) -- (4.4,6.5);
        \draw (3.4,8) -- node[above] {$c$} (4.4, 8.5); 
        \draw[dashed] (4.4,8.5) arc[start angle = -150, end angle = -90] node[inner sep = 0] (1) {}; 
        \draw (1) -- node[right] {$e$} (5.2, 6.9);
        \draw[dash dot dot] (5.2, 6.9) arc[start angle = 90, end angle = 210] node[inner sep = 0] (2) {};
        \draw (2) -- node[below] {$b$} (3.4,5);   
        \node at (0.85, 1.5) {\footnotesize $4, [1\ 3\ 2]$};
        \node at (1.75, 3) {\footnotesize $1, [3\ 2\ 4]$};
        \node at (0.85, 4.5) {\footnotesize $2, [1\ 4\ 3]$};
        \node at (1.75, 6) {\footnotesize $4, [1\ 2\ 3]$};
        \node at (0.85, 7.5) {\footnotesize $1,[2\ 3\ 4]$};
        \node at (-0.85, 4.5) {\footnotesize $3, [1\ 4\ 2]$};
        \node at (3.45, 6) {\footnotesize $3, [1\ 2\ 4]$};
        \node at (4.35, 7.5) {\footnotesize $2, [1\ 3\ 4]$};
        \end{scope};
    \end{tikzpicture}
    \caption{Left: The structure of $\Cf_{4, \frac{1}{3}} /S^1$, shown on the surface of an octahedron (grey). On the faces are the connected components, $C_{i, \rho'}$ (blue), of the critical locus at $r=\frac{1}{3}$, represented as hexagons due to Prop. \ref{prop: structure2}. At the vertices are the connected components, $S_\perm$ (red), of the non-critical subset of $\Cf_{4, \frac{1}{3}} /S^1$.  Right: A net/schematic for $\Cf_{4, \frac{1}{3}} /S^1$. The circle containing the cyclic ordering $\perm$ represents the set $S_\perm$, and the hexagon containing a pair $i, \rho$ represents the critical component $C_{i,\rho}$. The straight outer edges are labelled with letters to show how they are glued. The boundary arcs are glued to the circles of the same line style.} 
    \label{fig: struct}
\end{figure}

\subsection{Contracting the configuration space} \label{sec: contr}

We note that $C$ is the closure of the locus of critical configurations $x\in s\left( \qCf_4 \right)$ with $\bar{\tau}(x)=\frac{1}{3}$ -- that is, there is no local flow at any $x \in C$ such that $\tau$ is increasing to first order. Conversely, if $x\in B$, then we may immediately flow the disc at the origin outwards as follows (see Fig. \ref{fig: out}). The rays from the origin through the three other disc centres divides the boundary of the unit disc into three arcs. Since all discs lie in a closed semidisc, precisely one of these arcs is at least a semicircle. Then the disc at the origin may move radially outwards towards the point bisecting this arc. We use this idea to retract $\hat{\tau}^{-1}\left[ \frac{1}{3}, \infty \right)$ onto the 1-skeleton $Y$ from Thm. \ref{thm: graph2} by first contracting each $S_\perm$ to a point and then retracting each $C_{i, \rho'}$ onto a 1-skeleton.

\begin{figure}[htbp]
    \begin{center}
    \begin{tikzpicture}[node distance={15mm}, thick, main/.style={}]
        \begin{scope}[rotate = 90];
            \draw[color=black] (0,0) circle [radius=2];
            \draw[color=black!50] (0,0) circle [radius=0.67];
            \draw[color=black!50] (0,1.33) circle [radius=0.67];
            \draw[color=black!50] (-0.2, -1.32) circle [radius=0.67];
            \draw[color=black!50] (-1.32, 0.2) circle [radius=0.67];
            \node[circle, inner sep = 0, minimum size = 4, fill=black, label = above:$x_2$] at (0, 1.33) {};
            \node[circle, inner sep = 0, minimum size = 4, fill=black, label = below:$x_3$] at (0, 0) {};
            \node[circle, inner sep = 0, minimum size = 4, fill=black, label = above:$x_1$] at (-0.2, -1.32) {};
            \node[circle, inner sep = 0, minimum size = 4, fill=black, label = above:$x_4$] at (-1.32, 0.2) {};
            \draw[->] (0.1, 0) -- (1.9,-0.05);
        \end{scope};        
    \end{tikzpicture}
    \end{center}
    \caption{An example of a non-critical configuration in $s \left( \qCf_{4, \frac{1}{3}} \right)$ with one disc centred at the origin. Since there is an empty sector of at least $\pi$ radians anticlockwise from $x_1$ to $x_2$, it is possible for disc 3 to move into this gap along some radius (e.g. the radius bisecting this sector), thus increasing the value of $\tau$ on the underlying configuration of points.}
    \label{fig: out}
\end{figure}

Let $s_\perm \in S_\perm$ be the unique configuration in which $|x_i|= \frac{2}{3}$ for all $i\in \{1,2,3,4\}$ and \[ \angle x_1 0 x_{\perm(1)} = \angle x_{\perm(1)} 0 x_{\perm^2(1)} = \angle x_{\perm^2(1)} 0 x_{\perm^3(1)} = \frac{\pi}{2}\] -- that is, if we consider $s_\perm$ as an element of $\Cf_{4, \frac{1}{3}}$, then it is the unique configuration in which all discs touch the boundary with cyclic order $\perm$, the disc centres lie on the corners of a square, and $x_3-x_2$ points in the positive $x$-direction.

\begin{prop} \label{prop: retr1}
    $\{s_\perm\}$ is a strong deformation retract of $S_\perm$.
\end{prop}

This claim is proven by the following three lemmas. Take $\varepsilon \in \left(0, \frac{1}{3} \right)$, and consider the sets 
\begin{align*}
    S_\perm' & = \left\{ x\in S_\perm \mid \forall_i \ |x_i|\ge \varepsilon \right\}, \\
    T_\perm &= \left\{ x\in S_\perm \mid \forall_i \ |x_i|= \frac{2}{3} \right\}.
\end{align*}

\begin{lem} \label{lem: retr1}
    $S_\perm'$ is a strong deformation retract of  $S_\perm$.
\end{lem}
\begin{proof}
   We start by defining our retraction. For any $x \in \Cf_{4, \frac{1}{3}}$, there is at most one $i$ such that $|x_i|\le \varepsilon$ by the triangle inequality. In the case that such $i$ exists, the radii through the remaining three points split the unit disc into three sectors. If $x_i=0$, one arc is at least a semicircle. Otherwise, the sector containing $x_i$ has extent at least $2\arccos \frac{3}{4}|x_i|$ by Cor. \ref{lem: angle at zero}, while the other two sectors each have extent at least $\frac{\pi}{3}$ by Cor. \ref{lem: angle}, and therefore their extent is at most $\frac{5\pi}{3} - 2\arccos \frac{3}{4}|x_i|$. Given that $|x_i| \le \frac{1}{3}$, we have \[2\arccos \frac{3}{4} |x_i| > 2\arccos \frac{1}{4} > \frac{5\pi}{3} - 2\arccos \frac{1}{4} > \frac{5\pi}{3} - 2\arccos \frac{3}{4} |x_i| \quad .\] Therefore there is a unique largest sector, so we may define the following: let $v$ be the point which bisects the arc bounding the largest sector, and $l$ be the unique positive real number such that $|x_i + lv| = \varepsilon$ (where this equation treats $v$ as a vector with $|v|=1$). Then we can define a continuous map $\chi \colon [0,1] \times S_\perm \to \Cf_4$ (shown in Fig. \ref{fig: small flow}), given by
    \[ \chi(t,x) \coloneqq \chi_t(x_1, x_2, x_3, x_4) = 
    \left( \left\{ \begin{array}{cl}
        x_j + tlv & \colon |x_j| \le \varepsilon \\
        x_j & \colon \textrm{otherwise}
    \end{array} \right. \right) _{j\in \{1,2,3,4\} } \quad .\]
    $\chi_t$ also preserves the cyclic order $\perm$, since $x_i$ flows in the direction of the angle bisector of the sector it lies in. We want to find a flow on $S_\perm$, that is, a continuous map $[0,1] \times S_\perm \to S_\perm$. To do this, we will first show that $\chi\left( [0,1] \times S_\perm \right) \subset \tau^{-1}\left[ \frac{1}{3}, \infty \right)$, and then modify $\chi$ so that $\Theta(\chi_t(x)) = 0$ for all $t\in [0,1]$, $x\in S_\perm$.
    
    We claim that $\chi\left( [0,1] \times S_\perm \right) \subset \tau^{-1}\left[ \frac{1}{3}, \infty \right)$. $\chi_t(x_j)$ is stationary for any $|x_j| \ge \varepsilon$, so we only need to check the case that $|x_i| \le \varepsilon$ for some $i$, and moreover, in this case, $|\chi_t(x_i)| \le \varepsilon < \frac{2}{3}$ for all $t\in [0,1]$. Then it remains to show that $|\chi_t(x_i) - x_j| \ge \frac{2}{3}$ for all $j\ne i$. Now suppose by symmetry that $x_i$ is anticlockwise of $v$ inside the sector bounded by $x_{\perm^3(i)}$ and $x_{\perm(i)}$. We have
    \begin{align*}
        |\chi_t(x_i) - x_j|^2 & = |x_i - x_j + tlv|^2 \\
        & = |x_i - x_j|^2 + 2tl(x_i - x_j) \cdot v + t^2l^2|v|^2 \\
        & = |x_i - x_j|^2 + 2tl \left(|x_i| \cos (\angle v0x_i) - |x_j| \cos (\angle v0x_j) \right)  + t^2l^2
    \end{align*}
    We bound this below by the following inequalities. By Cor. \ref{lem: angle}, we have 
    \begin{equation}
        \angle x_{\perm(i)} 0 x_{\perm^2(i)} \ge \frac{\pi}{3}, \quad \angle x_{\perm^2(i)} 0 x_{\perm^3(i)} \ge \frac{\pi}{3}
    \end{equation}
    hence 
    \begin{equation}
        \angle v0 x_{\perm(i)} = \frac{1}{2} \left( 2\pi - \angle x_{\perm(i)} 0 x_{\perm^2(i)} - \angle x_{\perm^2(i)} 0 x_{\perm^3(i)} \right) \le \frac{1}{2} \left(2\pi - 2\frac{\pi}{3} \right) = \frac{2\pi}{3}
    \end{equation}
    By Cor. \ref{lem: angle at zero}, we have
    \begin{equation}
        \angle x_i0 x_{\perm(i)} \ge \arccos \frac{3}{4}|x_i| \ge \arccos \frac{1}{4} \ge \frac{5\pi}{12}
    \end{equation}
    and therefore 
    \begin{equation}
        \angle x_{\perm^3(i)} 0v = \angle v0x_{\perm(i)} = \angle v0x_i + \angle x_i0 x_{\perm(i)} \ge \angle v0x_i + \arccos \frac{3}{4}|x_i| \ge \angle v0x_i + \frac{5\pi}{12}
    \end{equation}
    hence 
    \begin{equation}
        \angle v0x_i \le \frac{2\pi}{3} - \frac{5\pi}{12} = \frac{\pi}{4}   
    \end{equation}

    For $j=\perm(i)$, we have by (4.4) and (4.2)
    \[0 < \arccos \frac{3}{4}|x_i| \le \angle v0 x_{\perm(i)} \le \pi\]
    Therefore, since $\cos$ is decreasing on $[0,\pi]$ and $|x_{\perm(i)}| \le \frac{2}{3}$, and applying (4.5), we get
    \[|x_{\perm(i)}| \cos (\angle v0 x_{\perm(i)}) \le \frac{2}{3} \cos \left( \arccos \frac{3}{4}|x_i| \right) = \frac{1}{2}|x_i| < |x_i| \cos( \angle v0x_i)\]
    
    For $j=\perm^2(i)$, we have by (4.1) and (4.3)
    \[ \angle v0 x_{\perm^2(i)} = \angle v0x_i + \angle x_i 0 x_{\perm(i)} + \angle x_{\perm(i)} 0 x_{\perm^2(i)} \ge 0 + \frac{5\pi}{12} + \frac{\pi}{3} > \frac{\pi}{2} \]
    and by (4.1) and (4.4)
    \[ \angle v0 x_{\perm^2(i)} = 2\pi - \angle x_{\perm^2(i)} 0v =2\pi - \angle x_{\perm^2(i)} 0 x_{\perm^3(i)} - \angle x_{\perm^3(i)} 0v  \le 2\pi - \frac{\pi}{3} - \frac{5\pi}{12} < \frac{3\pi}{2} \]
    Therefore, \[|x_{\perm^2(i)}| \cos (\angle x_{\perm^2(i)} 0v) <0 <|x_i| \cos( \angle v0x_i)\]
    
    For $j=\perm^3(i)$, we have $\cos \angle v0 x_{\perm^3(i)} = \cos\angle x_{\perm^3(i)} 0v$ and $\angle x_{\perm^3(i)} 0v = \angle v0 x_{\perm(i)}$, so $|x_{\perm^3(i)}| \cos (\angle v0 x_{\perm^3(i)}) < |x_i| \cos( \angle v0x_i)$ by the same argument as the case $j=\perm(i)$.

    Thus the map $t\mapsto |\chi_t(x_i)-x_j|$ is increasing with respect to $t$ for all $j\ne i$, which completes our claim that $\tau(\chi_t(x)) \ge \frac{1}{3}$ for all $t\in [0,1]$, $x\in S_\perm$.
    
    The map $t\mapsto \Theta (\chi_t(x))$ is not necessarily constant, so $\chi_t$ may not preserve $s\left( \qCf_{4, \frac{1}{3}} \right)$. Thus we define a modified flow $\bar{\chi}_t(x) = \left( -\Theta(\chi_t(x))\right) \cdot \chi_t(x)$. This flow retains all the previous properties due to invariance of distance and angles under rotation, and moreover $\Theta( \bar{\chi}_t(x)) = -\Theta(\chi_t(x)) + \Theta(\chi_t(x)) = 0$. Thus $\bar{\chi}_t(x) \in S_\perm$ for all $x \in S_\perm$. It can be verified that $\bar{\chi}_t|_{S_\perm'}  = \mathrm{id}_{S_\perm'}$ for all $t\in [0,1]$ and $\bar{\chi}_1(S_\perm) = S_\perm'$. Therefore $(\chi_t)_{t\in [0,1]}$ is a strong deformation retraction of $S_\perm$ onto $S_\perm'$.    
\end{proof}

\begin{lem} \label{lem: retr2}
    $T_\perm$ is a strong deformation retract of $S_\perm'$.
\end{lem}
\begin{proof}
   Let $V = \left\{ x \in \Cf_4 \mid \forall_{\lambda \ge 0} \forall_{i\neq j} x_i \neq \lambda x_j\right\} \supset S_\perm'$, so that $V$ consists of all configurations in $\Cf_4$ in which no point is at the origin and no two points lie on the same radius of the unit disc. $V' \coloneqq \left\{x \in \Cf_4 \colon \ \forall_i \ |x_i| = \frac{2}{3} \right\} \supset T_\perm$ can be shown to be a strong deformation retract of $V$ by the following flow, defined for $t\in [0,1]$. 
    \[\phi_t(x_1, x_2, x_3, x_4) = \left( \left( 1-t+\frac{2t}{3|x_i|} \right) x_i \right)_{i \in \{1,2,3,4\}} \]
    Here, each point of the configuration flows along the radius it lies on at constant velocity, until it has magnitude $\frac{2}{3}$ (and hence the cyclic order of the points is unchanged). 
    
    $\Theta(\phi_t(x))$ may not be constant with respect to $t$, so we define a modified flow $\bar{\phi}_t(x) = (-\Theta( \phi_t(x))) \cdot \phi_t(x)$,
    Then $\Theta(\bar{\phi}_t(x)) = 0$ for all $x \in V$, $t\in [0,1]$, so in particular, $\bar{\phi}_t$ preserves the section $s\left( \qCf_{4, \frac{1}{3}} \right)$. Moreover, for all $t\in[0,1]$ and $x\in S_\perm'$, and all distinct $i,j \in \{1,2,3,4\}$, we have $\varepsilon \le |x_i| \le |\bar{\phi}_tx_i| \le \frac{2}{3}$ and $|x_i-x_j| \ge \frac{2}{3}$; therefore by Lemma \ref{lem: flow dist}, $|\bar{\phi}_tx_i- \bar{\phi}_tx_j| \ge \frac{2}{3}$ -- that is, $\bar{\phi}_tx\in S_\perm'$. Therefore $(\bar{ \phi}_t|_{S_\perm'})_{t\in [0,1]}$ is a strong deformation retraction from $S_\perm'$ onto $V' \cap S_\perm' = T_\perm$.
\end{proof}

\begin{lem} \label{lem: retr3}
    $\{s_\perm\}$ is a strong deformation retract of $T_\perm$.
\end{lem}
\begin{proof}
    Let $\tilde{T}_\perm = \{ x\in \tau^{-1} \left[ \frac{1}{3}, \infty \right) \mid \forall_i |x_i|= \frac{2}{3}, \textrm{ the points have cyclic order } \perm\} \supset T_\perm$. For any $x\in \tilde{T}_\perm$, let $\alpha = \angle x_10 x_{\perm (1)}$, $\beta = \angle x_10 x_{\perm ^2(1)}$, $\gamma = \angle x_10 x_{\perm ^3(1)}$. Denote by $R_\theta$ the $S^1$-action on the plane given by anticlockwise rotation about the origin through angle $\theta$, and consider the following flow on $\tilde{T}_\perm$, defined for $t\in [0,1]$. \[ \psi_t(x_1, x_{\perm(1)}, x_{\perm^2(1)}, x_{\perm^3(1)}) = \left( x_1, R_{t\left( \frac{\pi}{2} -\alpha \right)} x_{\perm(1)}, R_{t\left( \pi -\beta \right)} x_{\perm^2(1)}, R_{t\left( \frac{3\pi}{2} -\gamma \right)} x_{\perm^3(1)} \right) \] $\psi_t$ fixes $x_1$, while causing each of the remaining $x_i$ to flow along an arc of radius $\frac{2}{3}$ centred on the origin until it lies on the correct corner of the square which has $x_1$ as a vertex. 
    
    We want to prove that $\psi_t$ is well-defined, that is, that $\psi_t(x) \in \tilde{T}_\perm$ for all $t\in [0,1], \ x\in \tilde{T}_\perm$. Given that $|\psi_t(x_i)| = \frac{2}{3}$ for all $t\in[0,1]$ and all $i$, it remains to check $|\psi_t (x_i)- \psi_t( x_j)| \ge \frac{2}{3}$ for all $i\ne j$, or equivalently $\angle \psi_t( x_i) 0 \psi_t( x_{\perm(i)}) \ge \frac{\pi}{3}$ for all $i$ and all $t\in[0,1]$ (this condition also ensures that the cyclic order of the points is unchanged). This is immediate for $i=1$, since either $\alpha>\frac{\pi}{2}$, in which case $\angle \psi_t(x_1) 0 \psi_t(x_{\perm(1)}) = \angle x_1 0 R_{t(\pi/2-\alpha)}( x_{\perm(1)}) \ge \frac{\pi}{2}$ for all $t\in [0,1]$, or else $\angle \psi_t(x_1) 0 \psi_t(x_{\perm(1)})$ is increasing with respect to $t$. But by considering instead the flow $(t,x) \mapsto (t(\alpha -\frac{\pi}{2})) \cdot \psi_t(x)$, we obtain a rotation of our original flow, where now $x_{\perm(1)}$ is stationary; since distances are invariant under rotation, we may argue in the same way that the desired property holds also for $i=\perm(1)$, and similarly for the remaining values of $i$. 
    
    $\Theta( \psi_t(x))$ is not necessarily constant with respect to $t$, so we instead use the modified flow $\bar{ \psi}_t(x) = (-\Theta(\psi_t(x))) \cdot \psi_t(x)$. Then $\Theta( \bar{ \psi}_t (x)) = 0$ for all $t\in [0,1]$, so in particular $\bar{\psi}_t$ preserves the section $s\left( \qCf_{4, \frac{1}{3}} \right)$, where $T_\perm = \tilde{T}_\perm \cap s\left( \qCf_{4, \frac{1}{3}} \right)$. Given that $\bar{\psi}_t(s_\perm) = s_\perm$ for all $t\in [0,1]$ and $\bar{\psi}_1(x) = s_\perm$ for all $x\in T_\perm$, it follows that $(\bar{ \psi}_t|_{T_\perm})_{t\in [0,1]}$ is a strong deformation retraction from $T_\perm$ onto $\{s_\perm \}$.
\end{proof}

\begin{figure}[htbp]
    \begin{center}
    \begin{tikzpicture}[node distance={15mm}, thick, main/.style={}]
    	\begin{scope}[scale=0.6];
            \draw[color=black] (0,0) circle [radius=5] node[] {$\times$};
            \draw[dotted] (0,0) circle [radius=1.6];
            \node at (1.4, -1.4) {$B(0, \varepsilon)$};
            \draw[->, thin] (0,0) -- (5,0) node[right] {$v$};
            \node[circle, inner sep = 0, minimum size = 4, fill=black, label = above:$x_{\perm(i)}$] at (-0.83, 3.09) {};
            \node[circle, inner sep = 0, minimum size = 4, fill=black, label = above left:$x_i$] at (0.7,0.5) {};
            \draw[->] (0.75, 0.5) -- node[above right] {$\chi_t(x_i)$} (1.5, 0.5) ;
            \node[circle, inner sep = 0, minimum size = 4, fill=black, label = above:$x_{\perm^2(i)}$] at (-2.90, 0.65) {};
            \node[circle, inner sep = 0, minimum size = 4, fill=black, label = above:$x_{\perm^3(i)}$] at (-0.78, -2.90) {};
            \end{scope};
    \end{tikzpicture}
    \end{center}
    \caption{The flow $\chi_t$ on $\Cf_{4, \frac{1}{3}}$ from Lemma \ref{lem: retr1}. $v$ bisects the sector containing $x_i$, bounded by $x_{\perm(i)}$ and $x_{\perm^3(i)}$, where $x_i$ is the unique point of $x$ with $|x_i|\le \varepsilon$. $x_i$ flows in the $v$-direction.}
    \label{fig: small flow}
    \end{figure}

The following proposition is the key ingredient in Thm. \ref{thm: graph2}, and is demonstrated visually in Fig. \ref{fig: shrink}.

\begin{figure}[htbp]
    \centering
    \includegraphics[width=0.3\linewidth]{Thesis_graphics/qConf_structure.png}
    \includegraphics[width=0.3\linewidth]{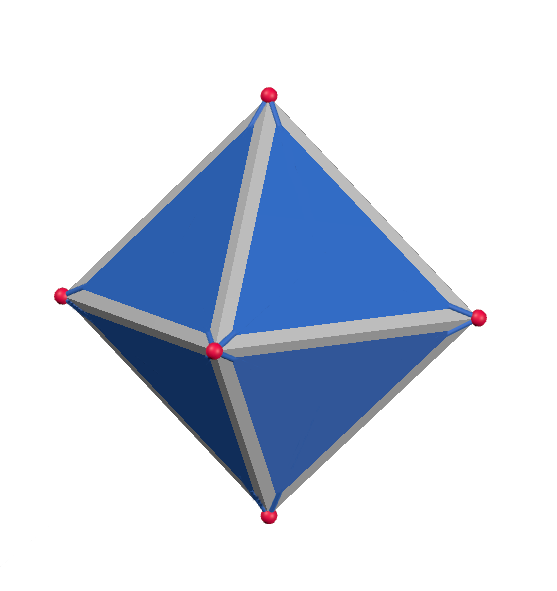}
    \includegraphics[width=0.3\linewidth]{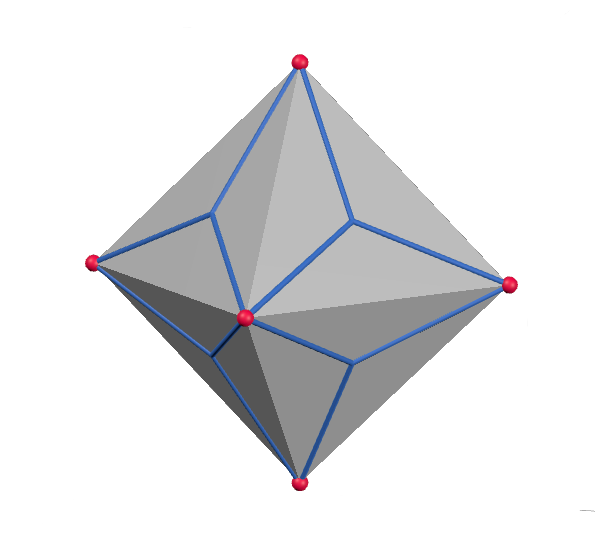}
    \caption{A graphical representation of Prop. \ref{prop: graph}. The left-hand picture, repeated from Fig. \ref{fig: struct}, shows the structure of $\Cf_{4, \frac{1}{3}} / S^1$. We contract the non-critical components (red) to points (shown in the middle picture), and then retract each critical component (blue) onto a Y-shaped skeleton (right).}
    \label{fig: shrink}
\end{figure}

\begin{prop} \label{prop: graph}
    Let $Y$ be the cell complex containing:
    \begin{itemize}
        \item a set, $V_1$, of 0-cells, indexed over the 6 vertices of an octahedron
        \item a set, $V_2$, of 0-cells, indexed over the 8 faces of an octahedron
        \item a set, $E$, of 1-cells, in which $v_1 \in V_1$ and $v_2 \in V_2$ bound a 1-cell if and only if the vertex corresponding to $v_1$ is on the boundary of the face corresponding to $v_2$.
    \end{itemize} 
    Then $\qCf_{4, \frac{1}{3}} \simeq Y$. 
\end{prop}

Since each face is bounded by three vertices, $|E|=8\times 3= 24$. The theorem will require a theorem of Borsuk \cite[Thm. 1]{bors}, and we must first demonstrate some properties of $s\left( \qCf_{n,r} \right)$.

\begin{lem} \label{lem: metsp}
    $s\left( \qCf_{n,r} \right)$ is a metric space.
\end{lem}
\begin{proof}
    First, note that $D^2$ is a metric space under the Euclidean metric. Thus, $\left( D^2 \right) ^n$ is a metric space. Since $s\left( \qCf_{n,r} \right)$ is a subspace of $\left( D^2 \right) ^n$, the result follows.
\end{proof}

\begin{lem} \label{lem: ANR}
    $s\left( \qCf_{n,r} \right)$ is an absolute neighbourhood retract.
\end{lem}
\begin{proof}
    First, note that we may describe this space as 
    \begin{IEEEeqnarray*}{rCr}
        s\left( \qCf_{n,r} \right) &=& \left\{ (x_1, \ldots, x_n) \in \left( \RR^2 \right) ^n \mid \forall_i \ |x_i| \le 1-r, \ \forall_{i\ne j} |x_i-x_j| \ge 2r, \right. \\
        && \left. \left((x_3-x_2) \cdot (1,0)\right)^2 = |x_3-x_2|^2, (x_3-x_2) \cdot (1,0) > 0 \right\}
    \end{IEEEeqnarray*}
    Given that it is described by a finite set of polynomial equations and inequalities under the identification $\left( \RR^2 \right)^n = \RR^{2n}$, $s\left( \qCf_{n,r} \right)$ is a semi-algebraic (and thus semi-analytic) set as defined in \cite[\S1]{loj}. Thus there is a homeomorphism between $s\left( \qCf_{n,r} \right)$ and a locally finite simplicial complex $L$ \cite[Thm. 1.c]{loj}. Therefore, in particular, $s\left( \qCf_{n,r} \right)$ is $\varepsilon$-dominated by $L$ for any $\varepsilon>0$ as defined in \cite[\S6]{hann}, and $L$ is an absolute neighbourhood retract \cite[Cor. 3.5]{hann}. Thus $s\left( \qCf_{n,r} \right)$ is an absolute neighbourhood retract \cite[Thm. 7.2.b]{hann}. 
\end{proof}

\begin{proof}[Proof of Prop. \ref{prop: graph}]
    First, we have that $S_\perm$ is closed in $s\left( \qCf_{n,r} \right)$ by Prop. \ref{prop: structure}. Furthermore, $s\left( \qCf_{n,r} \right)$ is a metric space by Lemma \ref{lem: metsp} and an absolute neighbourhood retract by Lemma \ref{lem: ANR}. Therefore $\left( s\left( \qCf_{n,r} \right), S_\perm \right)$ has the homotopy extension property with respect to $s\left( \qCf_{n,r} \right)$ by Borsuk's condition \cite[Thm. 1]{bors}.

    Now let $\sim$ be an equivalence on $s\left( \qCf_{n,r} \right)$, given by $x\sim y$ if and only if there is some cyclic ordering $\perm$ such that $x,y\in S_\perm$. Since $S_\perm$ is contractible by Prop. \ref{prop: retr1} and $\left( s\left( \qCf_{n,r} \right), S_\perm \right)$ has HEP, we may contract each $S_\perm$ to a point inside $s\left( \qCf_{n,r} \right)$ continuously by \cite[Prop 0.17]{hatch}. Therefore $s\left( \qCf_{4, \frac{1}{3}}\right) \simeq s\left( \qCf_{4, \frac{1}{3}} \right) / \sim$. The points $S_\perm$ compose the set $V_1$.
    Since the contractions of $S_\perm$ to a point also contract alternate edges of each critical component, $C_{i, \rho'}$, these become triangles, connected to the points of $V_1$ at their vertices. Thus $s\left( \qCf_{4, \frac{1}{3}} \right) / \sim$ consists of eight triangular faces, where each face is joined to three others at each of its vertices --it is an octahedron with the edges (but not the vertices) cut out. 
    
    Now, choose a point in the interior of each $C_{i, \rho'}$, and connect each of these points to the three vertices of its triangle by a line segment. These points compose $V_2$, and the line segments compose $E$. Now, we have chosen a Y-shape inside each $C_{i,\rho'}$, consisting of a 0-cell from $V_2$ at the centre, three 1-cells from $E$, and bounded by three 0-cells from $V_1$. We may retract every $C_{i,\rho'}$ onto this simultaneously. Therefore, by Prop. \ref{prop: triv}, \[\qCf_{4, \frac{1}{3}} \cong s\left( \qCf_{4, \frac{1}{3}}\right) \simeq s\left( \qCf_{4, \frac{1}{3}} \right) / \sim \; \, \simeq Y\]
\end{proof}

\begin{proof}[Proof of Thm: \ref{thm: graph2}]
    $\Cf_{4, r} \simeq \Cf_{4, \frac{1}{3}}$ by Fact \ref{fact: crits} and Prop. \ref{prop: crit4}. $\Cf_{4, \frac{1}{3}} \cong \qCf_{4, \frac{1}{3}} \times S^1$ by Prop. \ref{prop: triv}. Finally, $\qCf_{4, \frac{1}{3}} \times S^1 \simeq Y\times S^1$ by Prop. \ref{prop: graph}.
\end{proof}

\section{Persistence under deformation of the unit disc} \label{chap: deform}

Given the rotational symmetry of $D^2$, $\tau \colon \Cf_n \to \RR$ behaves as a Morse-Bott function (a generalisation of a Morse function in which critical points may not be isolated, defined in \cite[\S2.5]{bott}), with each critical radius corresponding not to a single critical configuration, but to an $S^1$ of critical configurations. Under a well-chosen small perturbation, we would thus expect, generically, a critical level $r_0$ to split into two critical radii, say $r_-<r_+$ near $r_0$. This is shown in the case of the first critical radius $\frac{1}{n}$ in the following proposition and lemma. We recall the stress graph $N(x)$ from Def. \ref{defn: stress}.

\begin{lem}\emph{(Adapted from \cite[\S2.5]{ell})} \label{lem: line} 
    Let $E\subset \RR^2$ be an ellipse of semi-major radius $a$ and eccentricity $e$, centred at the origin, whose major axis lies along the $x$-axis, and $p=(p_1,0)$ a point on the major axis of $E$. If $|p_1|>ae^2$, then $\dist(p,\partial E) = a-|p_1|$, and this is achieved at either $(-a,0)$ or $(a,0)$; otherwise, $\dist(p,\partial E) = \sqrt{\left(a^2-\frac{p_1^2}{e^2}\right) (1-e^2)}$, and this is achieved at $\left(\frac{p_1}{e^2}, \pm \sqrt{ \left( a^2 - \frac{p_1^2}{e^4} \right) \left( 1-e^2 \right)} \right)$.
\end{lem}
\begin{proof}
    $\partial E = \left\{ (x,y)\in \RR \mid \frac{x^2}{a^2}+\frac{y^2}{b^2}=1 \right\}$, where $b=a\sqrt{1-e^2}$. Since $\partial E$ is compact, there is some $(x',y')\in \partial E$ such that $\dist(p,\partial E) = |p-(x',y')| = \sqrt{(x'-p_1)^2+(y')^2}$. Furthermore, we know that $p$ lies on the normal line to $\partial E$ at $(x',y')$, which has parametric form $x=x'(1+\frac{t}{a^2})$, $y=y'(1+\frac{t}{b^2})$, $t\in\RR$. Then in particular, at $p$, we have $0=y'(1+\frac{t}{b^2})$; that is, either $y'=0$ or $t=-b^2$. 
    
    In the first case, we have $(x',y')=(\pm a,0)$, hence $|(x',y')-p|=a-|p_1|$. 
    
    In the second case, we have $p_1=x'\left( 1-\frac{b^2}{a^2} \right) = x'e^2$, that is $x' = \frac{p_1}{e^2}$, and therefore $1 = \frac{p_1^2}{a^2e^4}+\frac{(y')^2}{b^2}$, or equivalently 
    \begin{align*}
        (y')^2 & = b^2 \left( 1-\frac{p_1^2}{a^2e^4} \right) \\
        & = b^2 - \frac{p_1^2}{e^4} + \frac{p_1^2}{e^4} - \frac{b^2p_1^2}{a^2e^4} \\
        & = b^2 - \frac{p_1^2}{e^4} + \frac{p_1^2}{e^2} \\
        & = a^2(1-e^2) - \frac{p_1^2}{e^4} \left( 1-e^2 \right) \\
        & = \left( a^2 - \frac{p_1^2}{e^4} \right) \left( 1-e^2 \right) \quad .
    \end{align*}
    Then 
    \begin{align*}
        |(x',y')-p|^2 & = (x'-p_1)^2+(y')^2 \\
        & = p_1^2\left(\frac{1}{e^2}-1\right)^2 + \left( a^2 - \frac{p_1^2}{e^4} \right) \left( 1-e^2 \right) \\
        & = \frac{p_1^2}{e^4} - \frac{2p_1^2}{e^2} + p_1^2 + a^2(1-e^2) - \frac{p_1^2}{e^4} + \frac{p_1^2}{e^2} \\
        & = - \frac{p_1^2}{e^2} + p_1^2 + a^2(1-e^2) \\
        & = \left(a^2-\frac{p_1^2}{e^2}\right) (1-e^2) \quad .
    \end{align*}
    Since $p\in E$, we have $|x'|<a$, and therefore this case only applies in the case $|p_1|<ae^2$. In this case, we have 
    \begin{align*}
        \left(a^2-\frac{p_1^2}{e^2}\right) (1-e^2) & = a^2-a^2e^2-\frac{p_1^2}{e^2}+p_1^2 \\
        & = a^2-2ap_1+p_1^2 - \left( a^2e^2 - 2ap_1+\frac{p_1^2}{e^2} \right) \\
        & = (a-p_1)^2 - \left( ae-\frac{p_1}{e} \right)^2 \\
        & \le (a-p_1)^2 \quad ,
    \end{align*}
    so $\dist(p,\partial E)= \sqrt{\left(a^2-\frac{p_1^2}{e^2}\right) (1-e^2)}$. Otherwise $\dist(p,\partial E) = a-|p_1|$.
\end{proof}

\begin{prop}
    Take $n\in \NN$, and let $C$ denote the set of critical configurations of $\Cf_n$ with radius $\frac{1}{n}$ where the points lie in number order along the diameter. Consider the family of ellipses $E_e$ with semi-major radius 1 indexed by eccentricity $e\in \left[ 0,\sqrt{\frac{n-1}{n}} \right)$. There are exactly two continuous families, $\left(x^{e,+} \right)_e$ and $\left(x^{e,-} \right)_e$, of configurations such that 
    \begin{itemize}
        \item $x^{e,+}, x^{e,-}$ are critical configurations of $\Cf_n(E_e)$,
        \item $x^{0,+}, x^{0,-} \in C$.
    \end{itemize}  
\end{prop}
\begin{proof}
Let $\left(x^e \right)_e$ be a family of configurations such that $x^e$ is a critical configuration of $E_e$ and $x^0 \in C$. We choose coordinates such that $E_e$ is defined by the equation $x^2 + \frac{y^2}{1-e^2} = 1$.

First, let $e\in \left( 0, \sqrt{\frac{n-1}{n}} \right)$ be such that $N(x^e)\cong N(x^0)$. Then we deduce by Lemma \ref{lem: hull}\emph{i} that the points of $x$ lie along the straight line between the boundary vertices of $N(x^e)$. Furthermore, $N(x^e)$ is a normal to the boundary at both points of intersection. The normal at a point $(p,q)$ is $2\left(p, \frac{q}{1-e^2} \right)$. Since the normal at the other intersection point must be parallel to this, the intersection point must be $(-p, -q)$. Then it follows that $(p,q) - (-p,-q) = \lambda \left(p, \frac{q}{1-e^2} \right)$ for some $\lambda\in \RR$, which has solutions only if $p=0$ or $q=0$. Therefore $N(x^e)$ is either the major axis or the minor axis of $E_e$. We denote the configurations of this form $x^{e,+}$ and $x^{e,-}$ respectively. The points of these configurations are defined by $x^{e,+}_i = \left( \frac{2i-n-1}{n}, 0 \right)$ and $x^{e,-}_i = \left( 0, \frac{2i-n-1}{n}\sqrt{1-e^2} \right)$. 

Now suppose for a contradiction there is $e'\in \left[ 0, \sqrt{\frac{n-1}{n}} \right)$ such that $N(x^{e'}) \not \cong N(x^0)$, and let $e^* = \inf \left\{ e\in \left[ 0,\sqrt{\frac{n-1}{n}} \right) \mid N(x^e) \not \cong N(x^0) \right\}$. $N(x^{e'})$ cannot be a strict subgraph of $N(x^0)$, since any strict subgraph of $N(x^0)$ has an interior vertex of degree 1 on every component, and thus cannot be balanced by Lemma \ref{lem: hull}\emph{ii}. 

Let $f \colon \Cf_n \to \RR$ be any distance function from the definition of $\tau$ -- that is, $f(x)$ takes the form $\frac{1}{2}|x_i-x_j|$ for some distinct $i,j$, or $\dist(x_i,\partial D^2)$ for some $i$. Let $g\colon \left[ 0, \sqrt{\frac{n-1}{n}} \right) \to \RR, \ e \to \frac{f(x^{e})}{\tau(x^{e})}$. Then $g$ is continuous, and the edge corresponding to $f$ is found in $N(x^e)$ if and only if $g(e)=1$. Since $g$ is constant on $[0,e^*)$, it follows by continuity that $g(e^*) = g(0)$ for all choices of $f$, and thus $N(x^{e^*}) \cong N(x^{0})$, which lies along either the major axis or the minor axis of $E$.

Hence by assumption, there exists some edge which lies in $N(x^{e'})$ but not in $N(x^{e^*})$. There are two possibilities: either the points $x^{e'}_1$ or $x^{e'}_n$ are adjacent to more than one boundary vertex, or there is some choice of $f$ such that $g(e^*) > g(e') = 1$. Denote $B_e = B\left(0, \sqrt{1-e^2} \right)$. First, if $x^{e'}$ lies along the major axis, we have $|x^{e'}_1| = |x^{e'}_n| =\frac{n-1}{n} > (e')^2$, thus $\dist(x^{e'}_1, \partial E_{e'})$ and $\dist(x^{e'}_n, \partial E_{e'})$ are each achieved at a unique point by Lemma \ref{lem: line}. On the other hand, if $x^{e'}$ lies along the minor axis, we see that $x^{e'}_1, x^{e'}_n \in B_{e'} \subset E_{e'}$, where $\partial B_{e'}$ intersects $\partial E_{e'}$ only on the line containing $\bigcup x^{e'}$. Now $\dist \left( x^{e'}_1, \partial B_{e'} \right)$ is achieved only at the point of intersection closer to $x_1$; therefore this is also the unique point where $\dist(x^{e'}_1, \partial E_{e'})$ is achieved. The same argument holds for $\dist(x^{e'}_n, \partial E_{e'})$. 

Now only one possibility remains: there is some choice of $f$ such that $g(e^*) > g(e') = 1$. We consider each distance functions $f$ for which $g(e^*)>1$:
\begin{itemize}
    \item For $j-i\ge 2$, $\frac{1}{2}|x^{e^*}_j - x^{e^*}_i| = (j-i)\tau \left( x^{e^*} \right) \ge 2\tau \left( x^{e^*} \right)$.
    \item For $x^{e^*} = x^{e^*,-}$ and $1<i<n$, we note that $x^{e^*}_i \in B_{e^*} \subset E_{e^*}$. Then $\dist (x^{e^*}_i, \partial E_{e^*}) \ge \dist \left(x^{e^*}_i, \partial B_{e^*} \right) = \sqrt{1-(e^*)^2} - |x^{e^*}_i| \ge \frac{3}{n}\sqrt{1-(e^*)^2} = 3\tau \left( x^{e^*} \right)$
    \item For $x^{e^*} = x^{e^*,+}$ and $1<i<n$ such that $|x^{e^*}_i|\ge (e^*)^2$, we have $\dist(x^{e^*}_i, \partial E_{e^*}) = 1-|x^{e^*}_i| \ge \frac{3}{n} = 3\tau \left( x^{e^*} \right)$ by Lemma \ref{lem: line}.
    \item For $x^{e^*} = x^{e^*,+}$ and $1 <i<n$ such that $|x^{e^*}_i|<(e^*)^2$, we have $\frac{|x^{e^*}_i|^2}{(e^*)^2} < |x^{e^*}_i|$. Then $\dist(x^{e^*}_i, \partial E_{e^*}) = \sqrt{\left( 1- \frac{|x^{e^*}_i|^2}{(e^*)^2} \right) \left(1-(e^*)^2 \right)} > \sqrt{\left( 1-|x^{e^*}_i| \right)\frac{1}{n}} > \sqrt{\frac{3}{n} \cdot \frac{1}{n}} = \frac{\sqrt{3}}{n} = \sqrt{3} \tau \left( x^{e^*} \right)$ by Lemma \ref{lem: line}.
\end{itemize}
For each of these distance functions $f$, by continuity of $g$, there exists $\delta_f>0$ such that for all $e\in [e^*, e^* + \delta_f)$, we have $g(e) > g(e^*) - \frac{1}{2} \ge \sqrt{3} - \frac{1}{2} > 1$. Therefore $N(x^e)$ does not contain the edge corresponding to $f$. Set $\delta = \min_f \delta_f$. Then $N(x^e) \cong N(x^{e^*})$ for all $e\in \left[ e^*, e^* + \delta \right)$, contradicting the definition of $e^*$.

Therefore $x^e = x^{e,+}, x^{e,-}$ are the unique families of configurations $\left(x^e \right)_{e}$ such that $x^e$ is a critical configuration of $E_e$ and $x^0 \in C$. 
\end{proof}

Provided the perturbation is small, we would expect the topology just above $r_+$ in the perturbed case to be the same as the topology just above $r_0$ in the unperturbed case. By contrast, the homotopy type on the region with radius just above $r_-$ may have different topology, and this is where the $\pi_{n-2}$-class of Baryshnikov et al. \cite[\S5.2]{bald} lives (if you consider an infinite strip as the limit of an ellipse with fixed semi-minor radius as eccentricity approaches 1). 

In this section, we investigate the topology beyond $r_+$, by considering the persistence of our homotopy class from \S\ref{sec: extend} under perturbations of $D^2$. First, we consider arbitrary small perturbations. Then, following Remark \ref{rem: thin}, we  consider to what extent it is necessary to use the full unit disc by investigating the persistence of this homotopy class as we replace $D^2$ with an ellipse of decreasing semi-minor radius. We will use the map $\ang$ from Def. \ref{defn: ang}.

\begin{thm} \label{thm: deform}
    Take $r\in (0,\infty)$, $n\ge 4$, and $U\subset \RR^2$. Suppose there are discs $C_1, C_2\subset\RR^2$ of radius $\left(n-1\right)r$ and strictly less than $nr$ respectively, such that $C_1\subset U$ and any disc of radius $r$ in $U$ is contained in $C_2$. Then there is some non-zero $\gamma \in \pi_{n-3}(\Cf_{n,r}(U))$ such that $\iota_\ast(\gamma)=0$ in $\pi_{n-3}(\Cf_n(U))$.
\end{thm}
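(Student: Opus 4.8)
The plan is to recycle the chain construction from the proof of Theorem \ref{thm: nts} almost verbatim, letting the two hypotheses play the roles that the radius $1$ of $D^2$ and the bound $r>\frac1n$ played there: the disc $C_1$ guarantees that the configurations fit inside $U$, while $C_2$ guarantees that no $n$ discs can be collinear. Concretely, I would again take $S=\partial\left(\left[-\frac{2\pi}{n},\frac{2\pi}{n}\right]^{n-2}\right)$ and define $\SS\colon S\to\Cf_{n,r}(U)$ by the same algorithm (build a contact-chain of discs with turning angles $\phi_i$, recentre by $-\frac{x_1+x_n}{2}$), followed by one extra translation carrying the origin to the centre of $C_1$. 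The $n$-gon argument of Theorem \ref{thm: nts}, which uses only $|\phi_k|\le\frac{2\pi}{n}$, shows the discs are pairwise disjoint; and since every point of $S$ has a coordinate equal to $\pm\frac{2\pi}{n}$, Lemma \ref{lem: fit} with $\xi=\frac{2\pi}{n}$ confines $\bigcup\SS(\Phi)$ to a ball of radius $\left(n-4\sin^2\frac{\pi}{2n}\right)r$ about that centre, i.e.\ inside $C_1\subset U$. Hence $\SS$ is a genuine local section of $\ang$ valued in $\Cf_{n,r}(U)$.

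For non-triviality I would rescale the whole picture by the reciprocal of the radius of $C_2$, so that after a translation $C_2$ becomes the closed unit disc; under the induced homeomorphism $\Cf_{n,r}(U)\cong\Cf_{n,r'}(U')$ the section transports, and the hypotheses that every radius-$r$ disc of $U$ lies in $C_2$ and that $C_2$ has radius $<nr$ turn into exactly the hypotheses of Lemma \ref{lem: punc} with $k=n$: every radius-$r'$ disc of $U'$ lies in $D^2$, and $r'>\frac1n$. Since $\ang(\DD)=0$ forces the $n$ centres to be collinear and $[S]\ne 0\in\pi_{n-3}(T^{n-2}\setminus\{0\})$, Lemma \ref{lem: punc} delivers $\gamma\coloneqq[\SS]\ne 0\in\pi_{n-3}(\Cf_{n,r}(U))$. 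Rescaling is harmless because $\ang$ is scale-invariant.

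It remains to kill $\gamma$ in $\Cf_n(U)$. For $n\ge 5$ this is immediate: $\Cf_n(U)$ is aspherical \cite{aspher} and $n-3\ge 2$, so $\pi_{n-3}(\Cf_n(U))=0$. For a uniform argument covering $n=4$ as well, I would extend $i_{n,r}\circ\SS$ over the solid cube $\left[-\frac{2\pi}{n},\frac{2\pi}{n}\right]^{n-2}$ (which $S$ bounds) by running the same chain algorithm into the space of point configurations $\Cf_n(U)$, producing a map $\widetilde{\SS}$. Two checks are needed. First, the $n$ centres stay distinct: the $n$-gon argument again yields pairwise disjoint discs, hence centre-separation $\ge 2r$, for every angle tuple in the cube. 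Second, all centres stay inside $C_1$: applying Lemma \ref{lem: fit} at the index realising $\xi=\max_l|\phi_l|$ bounds every $|z-x_i|$ by $\left(n-3+2\cos\frac{\xi}{2}\right)r\le(n-1)r$, and since $4\sin^2\frac{\pi}{2n}\le 1$ for $n\ge 3$ we have $(n-1)r\le\left(n-4\sin^2\frac{\pi}{2n}\right)r$, so the centres remain in $C_1\subset U$. The cube being contractible, $\widetilde{\SS}$ is a null-homotopy of $i_{n,r}\circ\SS$, whence $(i_{n,r})_\ast(\gamma)=0$.

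The main obstacle is this last step, and specifically the fact that $\widetilde{\SS}$ lands in $\Cf_n(U)$ despite \emph{not} landing in $\Cf_{n,r}(U)$: for near-collinear interior angle tuples the discs bulge out to distance nearly $nr$ from $z$ and so leave $C_1$, and it is essential to track only the centres (which span $(n-1)r$) rather than the discs. This is precisely the mechanism by which the sphere becomes contractible once each disc is replaced by its centre, and it is exactly where the inequality $4\sin^2\frac{\pi}{2n}\le 1$ — equivalently the gap between the admissible radii of $C_1$ and $C_2$ — is consumed. A secondary point worth a line is that the two radius hypotheses are mutually consistent, since $n-4\sin^2\frac{\pi}{2n}<n$, so that a region $U$ meeting both can exist.
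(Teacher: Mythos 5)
Your construction and non-triviality argument coincide with the paper's: the same chain section over $\partial\left(\left[-\frac{2\pi}{n},\frac{2\pi}{n}\right]^{n-2}\right)$ placed inside $C_1$ via Lemma \ref{lem: fit}, followed by rescaling so that $C_2$ becomes the unit disc and an appeal to Lemma \ref{lem: punc} with $k=n$. Where you genuinely go further is the vanishing claim $(i_{n,r})_\ast(\gamma)=0$: the paper's proof stops after establishing $\gamma\neq 0$ and leaves this part implicit (asphericity of $\Cf_n(U)$ disposes of $n\ge 5$, and for $n=4$ the loop is $\sigma_1^{-1}\sigma_3\sigma_1\sigma_3^{-1}$, which is trivial in $PBr_4$ since $\sigma_1$ and $\sigma_3$ commute), whereas your extension of the chain algorithm over the solid cube gives a uniform, explicit null-homotopy valid for all $n\ge 4$ at once. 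That argument is sound: the $n$-gon disjointness argument uses only $|\phi_k|\le\frac{2\pi}{n}$ and therefore keeps the centres distinct over the whole cube, and the centres satisfy $|x_i-z|\le (n-1)r\le\left(n-4\sin^2\frac{\pi}{2n}\right)r$, so they stay in $C_1\subset U$. One cosmetic repair: for interior tuples you should obtain the bound $(n-1)r$ directly from the triangle inequality along the touching chain rather than from Lemma \ref{lem: fit}, whose hypothesis requires some $\phi_l=\pm\xi$ with $\xi>0$ and so does not literally cover the all-zero tuple. Your closing observation --- that near the collinear tuple the discs themselves bulge out to distance nearly $nr$ and may leave $U$, so that the filling exists only after passing to centres --- correctly isolates the mechanism that makes the class die under $i_{n,r}$.
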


In particular, this shows that the non-trivial $\pi_{n-3}$-class remains in $\Cf_{n,r}(U)$ for $\frac{1}{n} < r < \frac{1}{n-1}$ when $U$ is a deformation of $D^2$, provided that the deformation is small enough that: 1) we can fit a disc of radius $\left(n-1\right)r$ inside $U$, and 2) every disc of radius $r$ in $U$ lies in the unit disc. The first condition means the boundary of $U$ cannot be deformed very far towards the origin. On the other hand, these conditions place no restrictions on the area of $U$, nor do they require $U$ to be bounded -- consider, for example, the deformation of $D^2$ in which we remove the segment of $D^2$ with $y$-coordinate greater than $1-\varepsilon$ for some small $\varepsilon>0$, and then take the union of the remaining region with the semi-infinite strip $[-\varepsilon, \varepsilon] \times [0,\infty)$.

\begin{proof}
 The conditions on $C_1, C_2$ yield $\Cf_{n,r}(C_1) \subset \Cf_{n,r}(U) \subset \Cf_{n,r}(C_2)$. Denote the inclusion maps by $i_1, i_2$ respectively. Furthermore, it is impossible for $n$ discs of radius $r$ to lie on a straight line inside $C_2$, so $\ang(\Cf_{n,r}(C_2)) \subset T^{n-2}\backslash\{0\}$. 
 
 By Thm. \ref{thm: braid} and \ref{thm: nts}, there is some non-contractible sphere $\SS \subset \Cf_{n,r}(C_1)$ such that $\iota_\ast([\SS])=0$ in $\pi_{n-3}(\Cf_n(C_1))$. In particular, this is proven by showing that $\ang(\SS)$ is non-contractible in $T^{n-2}\backslash \{0\}$. We claim $\gamma \coloneqq (i_1)_\ast([\SS])$ satisfies the desired properties. 
 
 First, note that $\ang = \ang \circ i_2 \circ i_1$. Since $(\ang \circ i_2)_*\circ (i_1)_*([\SS]) = (\ang)_\ast[\SS] \neq 0$ in $\pi_{n-3}(T^{n-2} \backslash \{0\})$, it follows that $\gamma \neq 0$ in $\pi_{n-3}(\Cf_{n,r}(U))$.

 To finish, let $i_3 \colon \Cf_n(C_1) \hookrightarrow \Cf_n(U)$ be the inclusion map. Then $\iota \circ i_1 = i_3 \circ \iota$ and therefore \[\iota_\ast (\gamma) = \iota_\ast ((i_1)_\ast ([\SS])) = (i_3)_\ast (\iota_\ast ([\SS])) = (i_3)_\ast(0) = 0 \quad . \qedhere \]
\end{proof}

Following Remark \ref{rem: thin}, it is apparent we shouldn't need the full extent of the unit disc in Thm. \ref{thm: nts}. Indeed, if a small enough angle is used in place of $\frac{2\pi}{n}$ in the given construction of the $(n-3)$-sphere, the discs should remain within some narrow strip parallel to (and centred on) the $x$-axis. The following result shows that this homotopy class in \S3 remains when we deform the unit disc into an ellipse with semi-major radius 1, provided that the semi-minor radius is greater than $\frac{1}{\sqrt{n}}$. This result also enables us to generalise Thm. \ref{thm: deform} by replacing $C_1$ with a suitable ellipse. 

Under this deformation, the first critical level $\frac{1}{n}$ splits into two critical levels. The lower of these is $\frac{\sqrt{1-e^2}}{n}$, where the numerator is the semi-minor radius. This theorem concerns the upper critical level, $\frac{1}{n}$, which is related to the semi-major radius.

\begin{thm} \label{thm: ell}
    Let $n\in \NN$, and let $E$ be an ellipse of semi-major radius 1 and eccentricity $e\in \left[0,\sqrt{\frac{n-1}{n}}\right)$ (equivalently, the semi-minor radius of $E$ lies in $\left(\frac{1}{\sqrt{n}}, 1\right]$).  Then there is some $\varepsilon>0$ such that, for $r\in\left(\frac{1}{n}, \frac{1}{n}+\varepsilon\right]$, $\Cf_{n,r}(E)$ contains a non-contractible $(n-3)$-sphere.
\end{thm}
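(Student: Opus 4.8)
The plan is to recycle the touching-chain construction from the proof of Theorem \ref{thm: nts}, but to exploit that $\ang$ is \emph{invariant under rigid rotation} of a configuration: it records only the successive differences $\phi_2,\dots,\phi_{n-1}$ of the edge directions, so rotating $\DD$ as a whole leaves $\ang(\DD)$ unchanged. This lets me choose the planar orientation of each configuration independently, and the whole idea is to orient each chain so that the chord $\overline{x_1x_n}$ between its two end discs lies along the \emph{major} axis of $E$, with its midpoint at the origin. The two end discs then sit on the axis near the vertices $(\pm 1,0)$, which are precisely the pinch points of a thin ellipse. I would stress that here the ball bound of Lemma \ref{lem: fit} (equivalently the disc $C_1$ in Theorem \ref{thmm: deform}) is useless: it would force $b$ close to $1$, whereas the true obstruction is one-dimensional and orientation-sensitive.

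Concretely, fix $\xi\in(0,2\pi/n]$ and put $S=\partial[-\xi,\xi]^{n-2}\subset T^{n-2}$. For $\Phi\in S$ I build the touching chain as in Theorem \ref{thm: nts} (coordinates as in Remark \ref{rem: coord}), rotate it so $\overline{x_1x_n}$ is horizontal, and translate its midpoint to $0$; this is continuous because $x_1\neq x_n$ for $\xi$ small (the chain never closes up), so once containment is checked it defines a section $\SS\colon S\to\Cf_{n,r}(E)$ of $\ang$. Containment then rests on three estimates. \emph{(i) Length:} a configuration with at least one bend of size $\xi$ satisfies $|x_1-x_n|\le 2(n-1)r-\Omega(r\xi^2)$, so its horizontal extent drops below the major axis $2$ once $\varepsilon$ is small compared with $\xi^2$. \emph{(ii) End discs:} with $D_1,D_n$ centred on the axis at distance $\le 1-r$ from $0$, an exact optimisation of $\max_{q\in D_n}\bigl(q_x^2+q_y^2/b^2\bigr)$ shows the radius-$r$ disc touching the vertex $(1,0)$ lies in $E$ iff $r\le b^2$, i.e.\ iff $b\ge\sqrt r$; this is exactly where $b>\tfrac1{\sqrt n}$ enters, being the radius-of-curvature condition $1<nb^2$ at the pointed end of the rescaled ellipse $nE$. \emph{(iii) Interior discs:} the transverse sag of the chain is $O(\xi)$ and vanishes at the endpoints, so it respects the lens, whose half-width is $\gtrsim b$ in the middle and tapers like $\sqrt{\,\cdot\,}$ near the vertices, for $\xi$ small. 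Given $b>\tfrac1{\sqrt n}$, I would therefore first fix $\xi$ small (ensuring (iii) and $\xi\le 2\pi/n$), then pick $\varepsilon\in\bigl(0,\,b^2-\tfrac1n\bigr)$ small enough that (i) holds for all $r\in(\tfrac1n,\tfrac1n+\varepsilon]$.

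With $\SS$ constructed, I finish exactly as in Theorem \ref{thm: nts} via Lemma \ref{lem: punc}: rotation-invariance gives $\ang\circ\SS=\mathrm{id}_S$; $\ang(\DD)=0$ forces the $n$ discs onto a line, which cannot occur since such a chain has length $2nr>2$, the longest chord of $E$; and $[S]\neq 0\in\pi_{n-3}(T^{n-2}\setminus\{0\})$. Taking $f=\ang$, $k=n$, $p=0$, $m=n-2$ and $U=E\subset D^2$ in Lemma \ref{lem: punc} shows $\SS$ represents a non-contractible $(n-3)$-sphere in $\Cf_{n,r}(E)$.

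The hard part will be step (ii) together with the uniformity of (iii) across all of $S$. I must show that the entire bent chain — not merely its bounding ball — hugs the tapering lens in the chord-aligned orientation, and isolate that the only sharp constraint is the end disc lodging into the pointed vertex, with threshold $b=\sqrt r\to\tfrac1{\sqrt n}$ matching the hypothesis. In particular, verifying that the penultimate discs $D_2,D_{n-1}$ stay inside the rapidly narrowing lens near the vertices, uniformly in $\Phi\in S$ (and that zigzag configurations do not overshoot the chord horizontally), is the delicate point; controlling it is what forces $\xi$ to be taken small relative to $b$ and $n$.
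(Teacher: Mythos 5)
Your overall strategy coincides with the paper's: build the touching chain of Theorem \ref{thm: nts}, use the rotation-invariance of $\ang$ to align the chord $\overline{x_1x_n}$ with the major axis and centre its midpoint at the origin, and finish with Lemma \ref{lem: punc}. Your step (ii) is correct and correctly isolates where the hypothesis enters: by Lemma \ref{lem: line}, a disc of radius $r$ centred at $(c,0)$ with $|c|\le 1-r$ lies in $E$ exactly when $r\le b^2$ (the osculating-circle condition at the vertex), which is the threshold $b>\frac{1}{\sqrt{n}}$ as $r\to\frac{1}{n}$. Your step (i) likewise follows from Lemma \ref{lem: fit} applied to the endpoints, since $4\sin^2\frac{\xi}{4}$ is of order $\xi^2$.

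The gap is step (iii). You assert that the interior discs ``respect the lens'' because the transverse sag is $O(\xi)$ while the ellipse tapers like a square root, but you give no estimate, and you yourself flag the penultimate discs near the narrowing vertices as the delicate point. That containment is the technical heart of the theorem: near $x=\pm(1-3r)$ the half-width of $E$ is about $b\sqrt{6r}$, which is comparable to $r$ when $b$ is close to $\sqrt{r}$, so ``sag $=O(\xi)$'' does not close the argument without a quantitative bound, uniform over all of $S$, that also accounts for the curvature of $\partial E$ (the nearest boundary point to an off-axis centre is not vertically above it). The paper avoids the disc-by-disc analysis entirely: the focal-sum bound $|x_i-x_1|+|x_i-x_n|\le 2r\left(n-1-4\sin^2\frac{\xi}{4}\right)=2(1-r)$ from Lemma \ref{lem: fit} places \emph{all} centres simultaneously inside an ellipse $E'$ of semi-major radius $1-r$ with foci on the segment $\overline{x_1x_n}$ (after checking $|x_1-x_n|$ is long enough for the foci to lie between the endpoints), and Lemma \ref{lem: small2} shows the $r$-thickening of such an $E'$ stays inside $E$ when $e\le\sqrt{(1-r)\left(1-\sqrt{1-(e')^2}\right)}$. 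To complete your route you would either have to carry out the normal-distance computation of Lemma \ref{lem: line} at each off-axis centre $x_2,\dots,x_{n-1}$ uniformly in $\Phi\in S$, or adopt the focal-sum and thickening lemmas, at which point you have reproduced the paper's proof.
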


To prove this, we will construct a sphere in $\Cf_{n,r}(E)$, and prove that the disc centres in each configuration lie in some smaller ellipse $E'$. The following lemma defines the necessary relationship between the dimensions of $E$ and $E'$ which ensures that, if a disc has its centre in $E'$, then the disc is contained in $E$.

\begin{lem} \label{lem: small2}
    Take $\DD\in \Cf_{n,r}(\RR^2)$, $a>r$. Let $E'$ be an ellipse with semi-major radius $a-r$ and eccentricity $e'$ such that the centre of every disc of $\DD$ lies in $\overline{E'}$. Let $E$ be a concentric, coaxial ellipse of semi-major radius $a$ and eccentricity $e \le \sqrt{\left(1-\frac{r}{a}\right)\left(1-\sqrt{1-(e')^2}\right)}$. Then $\bigcup \DD \subset E$.
\end{lem}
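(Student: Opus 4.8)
The plan is to restate the conclusion as a containment of convex bodies and then reduce it, via support functions, to a single one-variable inequality. Since each centre $x_i$ lies in $\overline{E'}$ and each disc has radius $r$, we have $\bigcup\DD\subseteq \overline{E'}\oplus\overline{B(0,r)}$, the Minkowski sum, so it suffices to prove the containment $\overline{E'}\oplus\overline{B(0,r)}\subseteq \overline E$ of closed convex bodies (which yields $\bigcup\DD\subseteq\overline E$, hence the lemma). Writing $b=a\sqrt{1-e^2}$ and $b'=(a-r)\sqrt{1-(e')^2}$ for the two semi-minor radii, the support functions of the origin-centred, coaxial ellipses are $h_E(u)=\sqrt{a^2u_1^2+b^2u_2^2}$ and $h_{E'}(u)=\sqrt{(a-r)^2u_1^2+(b')^2u_2^2}$ for a unit vector $u=(u_1,u_2)$, while $h_{\overline{B(0,r)}}(u)=r$. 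As containment of convex bodies is equivalent to the pointwise inequality of support functions, and $h$ is additive under Minkowski sum, the required containment is equivalent to
\[ \sqrt{(a-r)^2u_1^2+(b')^2u_2^2}+r\;\le\;\sqrt{a^2u_1^2+b^2u_2^2}\qquad\text{for all unit }u. \]

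Next I would reduce this to a scalar inequality. Putting $t=u_1^2\in[0,1]$ and $f(t)=a^2t+b^2(1-t)$, $g(t)=(a-r)^2t+(b')^2(1-t)$, the condition becomes $\sqrt{f(t)}-\sqrt{g(t)}\ge r$ on $[0,1]$. I would first translate the eccentricity hypothesis: writing $\mu=\sqrt{1-(e')^2}=b'/(a-r)$, the bound on $e$ reads exactly $e^2\le\frac{a-r}{a}(1-\mu)$, which rearranges to $b^2\ge a(b'+r)$. Moreover $E$ only grows as $b$ increases, so $f$, and hence $\sqrt f-\sqrt g$, is increasing in $b$; it is therefore enough to verify the inequality at the extremal value $b^2=a(b'+r)$.

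It then remains to show $h(t):=\sqrt{f(t)}-\sqrt{g(t)}\ge r$ on $[0,1]$ when $b^2=a(b'+r)$. The endpoints are transparent: at $t=1$ one gets $a-(a-r)=r$ (equality, the major-axis constraint), and at $t=0$ one gets $b-b'\ge r$. To control the interior I would prove $h$ is nonincreasing. Since $f,g$ are affine with $f'=a^2-b^2\ge0$ and $g'=(a-r)^2-(b')^2\ge0$ (using $b'\le a-r$, whence also $b\le a$), the inequality $h'\le0$ is equivalent to $(f')^2g\le(g')^2f$, whose two sides are affine in $t$; hence it suffices to check it at $t=0$ and $t=1$. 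Using $b^2=a(b'+r)$, the $t=1$ case collapses to $0\le b'$, and the $t=0$ case, after clearing the common factor $(a-r-b')$ and squaring, reduces to $(a-r+b')^2(b'+r)\ge a\,(b')^2$; with $s=a-r$ the difference expands into $s^2b'+s^2r+sb'^2+2sb'r+b'^3$, a sum of manifestly nonnegative monomials. Thus $h$ is nonincreasing, its minimum on $[0,1]$ is $h(1)=r$, and the support-function inequality follows.

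The conceptual moves—the Minkowski/support-function reformulation and the reduction to extremal $b$—are routine; the main obstacle is the honest algebra at the two endpoints, especially the $t=0$ inequality, where one must use the precise relation $b^2=a(b'+r)$ coming from the eccentricity bound rather than the weaker $b-b'\ge r$. (An alternative to the monotonicity argument is to square twice and check that the resulting quadratic in $t$ is nonnegative on $[0,1]$, but the monotonicity route keeps the algebra lighter and makes clear why the bound is sharp along the major axis.)
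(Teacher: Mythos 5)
Your proof is correct, but it takes a genuinely different route from the paper's. The paper argues pointwise: for each centre $q\in\overline{E'}$ it projects to a point $q'$ on the major axis, invokes the explicit nearest-boundary-point formula of Lemma~\ref{lem: line} to get $\dist(q',\partial E)\ge r+b'$, separately bounds $|q-q'|\le b'$ by locating the extrema of the distance function on $\partial E'$, and concludes by the triangle inequality. You instead package the whole configuration into the Minkowski sum $\overline{E'}\oplus\overline{B(0,r)}$ and reduce the containment to the support-function inequality $\sqrt{(a-r)^2u_1^2+(b')^2u_2^2}+r\le\sqrt{a^2u_1^2+b^2u_2^2}$, then to a one-variable monotonicity check after translating the eccentricity hypothesis into $b^2\ge a(b'+r)$. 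I checked the key computations: the translation of the hypothesis is right, the $t=1$ endpoint of the derivative inequality does collapse to $0\le b'$, and the $t=0$ endpoint does expand (with $s=a-r$) to the nonnegative sum $s^2b'+s^2r+sb'^2+2sb'r+b'^3$, so the argument goes through. Your route is self-contained (it avoids Lemma~\ref{lem: line} entirely), and it makes transparent where the bound is sharp, namely along the major axis where $h(1)=r$ is an equality; the paper's route is more hands-on with the geometry of the ellipse but reuses a lemma it needs elsewhere in the section anyway. Two small points worth making explicit: since $\bigcup\DD$ is a union of open discs, containment in $\overline{E}$ does upgrade to containment in the open ellipse $E$ (you note this only parenthetically); and at $t=0$ the ``common factor'' you clear is really $(a-r-b')^2$, already present squared on both sides, so no additional squaring is involved --- and one should observe $b'\le a-r$ so that this factor and $f'=a^2-b^2$ are nonnegative at the extremal $b$.
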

\begin{proof}
    It is sufficient to check this for $e=\sqrt{\left(1-\frac{r}{a}\right)\left(1-\sqrt{1-(e')^2}\right)}$, since the associated ellipse is contained inside the ellipses of smaller eccentricity. For this proof, we use the equivalent definition $e=\sqrt{\frac{a-r-b'}{a}}$, where $b'=(a-r)\sqrt{1-(e')^2}$ is the semi-minor radius of $E'$. Since $E$ and $E'$ are concentric and coaxial, we choose coordinates so that the shared centre is the origin, and the shared major axis is the $x$-axis. Take $q=(q_1,q_2)\in \overline{E'}$ to be the centre of some disc in $\DD$ and assume by symmetry that $q$ lies in the first quadrant. We need to show that $\dist(q,\partial E)\ge r$. We will achieve this by finding a suitable point on the $x$-axis and using the triangle inequality.

    Let $q_3 = \min\{q_1,ae^2\}$ and $q'=(q_3,0)$. Since $q_3\le ae^2$, we have $\dist(q',\partial E) = \sqrt{\left(a^2-\frac{q_3^2}{e^2}\right) (1-e^2)}$  by Lemma \ref{lem: line}. This inequality also yields $a^2-\frac{q_3^2}{e^2} \ge a^2(1-e^2)$, so that $\dist(q', \partial E)^2 \ge a^2(1-e^2)^2 = a^2(1-\frac{a-r-b'}{a})^2 = (r+b')^2$. 
    
    Then it remains to show $|q-q'|\le b'$. If $q_1\le ae^2$, then $|q-q'|=q_2 \le b'$, where the second inequality arises from the fact that $b'$ is the semi-minor radius. Otherwise, $q' = (ae^2,0)$, and $\sup\{|q-q'| \colon q=(q_1,q_2) \in \overline{E'}, q_1>ae^2\}$ is achieved by some $(q_1,q_2) \in \partial E'$ with $q_1 \ge ae^2$. Since $ae^2\le (a-r)(e')^2$, Lemma \ref{lem: line} applied to $q'$ and $E'$ shows that $\partial {E'} \to \RR$, $q \mapsto |q-q'|$ has its local minima at $\left(\frac{p_1}{e^2}, \pm \sqrt{ \left( a^2 - \frac{p_1^2}{e^4} \right) \left( 1-e^2 \right)} \right)$, and therefore has local maxima at $(\pm(a-r),0)$. When we restrict to $q_1 \ge ae^2$, this supremum must be achieved at either: 1) $q_1=ae^2$, where $|q-q'|\le b'$ by the first case; or 2) $(a-r,0)$, where $|q-q'| = a-r-ae^2 = b'$.
    
    Thus $\dist(q,\partial E) \ge \dist(q',\partial E)-|q-q'| \ge \dist(q',\partial E)-b' \ge r$ as required.
\end{proof}

\begin{rem}
    The inequality $e \le \sqrt{\left(1-\frac{r}{a}\right)\left(1-\sqrt{1-(e')^2}\right)}$ is asymptotically sharp as $e'\to 1$. To see this, note that when $e'=1$, we have $e>\sqrt{1-\frac{r}{a}}$ if and only if $a-r<ae^2$. Hence, by Lemma \ref{lem: line}, $\dist((a-r,0), \partial D^2)$ is not achieved at $(a,0)$, so $\mathrm{dist}((a-r,0), \partial E) < |(a,0)-(a-r,0)| = r$, so the disc of radius $r$ centred at $(a-r,0) \in \overline{E'}$ is not contained in $E$. This is the case of interest in the following proof.
\end{rem}

\begin{proof}[Proof of Theorem \ref{thm: ell}]
    Take some angle $0<\xi<\frac{\pi}{n-2}$ such that $1-r>e^2$, where $r=\frac{1}{n-4\sin^2\frac{\xi}{4}}$, and let $S = \partial \left( \left[-\xi, \xi \right] \right)^{n-2} \subset T^{n-2}$. Let $e'=\sqrt{1-\left(1-\frac{e^2}{1-r}\right)^2}$, $z_1 = (-e'(1-r), 0)$, $z_2 = (e'(1-r),0)$. Note that $\frac{e^2}{1-r}< 1$ and $\frac{e^2}{1-r}\to 1$ as $e\to\sqrt{\frac{n-1}{n}}^-$, $\xi\to0$. Thus $e'<1$ and $e'\to 1$ as $e\to\sqrt{\frac{n-1}{n}}^-$, $\xi\to0$.
    We construct some $\SS \colon S\to \Cf_{n,r}(\RR^2)$ by the following algorithm on each $(\phi_2, \ldots, \phi_{n-1})\in S$:
    \begin{enumerate}
        \item Place $D_1$ at the origin of the plane and $D_2$ in contact with it, centred at $(2r, 0)$.
        \item Place each subsequent disc $D_i$ for $i\le n$ in contact with $D_{i-1}$ such that $x_i$ lies on the ray from $x_{i-1}$ at angle $\phi_{i-1}$ (Figure \ref{fig: disc i}).
        \item Translate and rotate the configuration so that $x_1$ and $x_n$ lie on the $x$-axis, equidistant from the origin.
    \end{enumerate}
    This is continuous and the discs are non-overlapping by the arguments in the proof of Theorem $\ref{thm: nts}$. 
    
    We now show that the centre of every disc in each configuration in $\SS$ fits inside the ellipse with semi-major radius $1-r$ and foci $z_1,z_2$, which has eccentricity $e'$. By remark \ref{rem: coord}, we see that, at stage 2, 
    \begin{align*}
        |x_1-x_n|^2 & = 4r^2 \left| \left( \sum_{i=1}^{n-1}\cos\theta_i \ , \ \sum_{i=1}^{n-1}\sin\theta_i \right) \right|^2 \\
        & = 4r^2 \left( \left( \sum_{i=1}^{n-1}\cos\theta_i\right)^2 + \left(\sum_{i=1}^{n-1}\sin\theta_i \right)^2 \right) \\
        & = 4r^2 \left( \sum_{i=1}^{n-1} (\cos^2\theta_i + \sin^2\theta_i) + 2\sum_{i=2}^{n-1}\sum_{j=1}^{i-1} (\cos\theta_i \cos\theta_j + \sin\theta_i \sin\theta_j) \right) \\
        & = 4r^2 \left( n-1 + 2\sum_{i=2}^{n-1}\sum_{j=1}^{i-1} \cos(\theta_i - \theta_j)  \right)
    \end{align*}
    where $|\theta_i-\theta_j| = |\sum_{k=i+1}^j \phi_k| \le (j-i)\xi \le (n-2)\xi$. Denoting $\theta = (n-2)\xi < \pi$, we see that $\cos(\theta_i-\theta_j) \ge \cos\theta$, and thus
    \begin{align*}
        |x_1-x_n|^2 & \ge 4r^2 \left( n-1 + 2\sum_{i=2}^{n-1}\sum_{j=1}^{i-1} \cos\theta \right) \\
        & = 4r^2 \left( n-1 + (n-1)(n-2) \cos\theta \right) \\
        & = 4r^2 \left( (n-1)^2 - (n-1)(n-2)(1-\cos\theta) \right) \quad .
    \end{align*}
    That is, at step 3, the $x$-coordinate of $x_n$ will be at least \[r\sqrt{(n-1)^2 - (n-1)(n-2)(1-\cos\theta)} \to \frac{n-1}{n} \textrm{ as } \xi \to 0 . \] Since 
    \begin{align*}
        e'(1-r)& = \sqrt{(1-r)^2-(1-r-e^2)^2} \\
        & = \sqrt{2(1-r)e^2-e^4} \\
        & < \sqrt{2\left(1-\frac{1}{n}\right)e^2-e^4} \\
        & = \sqrt{\left(1-\frac{1}{n}\right)^2-\left(1-\frac{1}{n}-e^2\right)^2} \\
        & < 1-\frac{1}{n} \\
        & = \frac{n-1}{n}
    \end{align*} and $\sqrt{2\left(1-\frac{1}{n}\right)e^2-e^4}$ is independent of $\xi$, it follows that \[r\sqrt{(n-1)^2 - (n-1)(n-2)(1-\cos\theta)} > e'(1-r)\] for sufficiently small $\xi$. Then $x_1, x_n$ are further from the origin than $z_1, z_2$ for small $\xi$, so we can find $t\in(0,1)$ such that $z_1 = tx_1 + (1-t)x_n$ and $z_2 = (1-t)x_1 + tx_n$.

    Using this, we find that for any $i$, 
    \begin{align*}
        |x_i-z_1| + |x_i-z_2| & \le t|x_i-x_1| + (1-t)|x_i-x_n| + (1-t)|x_i-x_1| + t|x_i-x_n| \\
        & = |x_i-x_1| + |x_i-x_n| \\
        & \le 2r\left(n-1-4\sin^2\frac{\xi}{4}\right) \quad ,
    \end{align*}
    where the last line follows from Lemma \ref{lem: fit}. That is, each $x_i$ lies inside the closed ellipse with foci $z_1,z_2$ and semi-major radius $(n-4\sin^2 \frac{\xi}{4})r - r = 1-r$ as required.
    
    Therefore, since $e = \sqrt{(1-r)\left(1-\sqrt{1-(e')^2}\right)}$, we observe that for all $\Phi \in S$, $\bigcup \SS(\Phi) \subset E$ by Lemma \ref{lem: small2}. Thus $\SS$ is a local section of $\ang \colon \Cf_{n,r}(E) \to T^{n-2}$.

    We finish by observing that $E \subset D^2$, and moreover $[S] \neq 0 \in \pi_{n-3}(T^{n-2}\backslash \{0\})$, where $0\in T^{n-2}$ corresponds to all discs lying in a straight line. Therefore $\SS$ represents a non-trivial class in $\Cf_{n,r}(E)$ by Lemma \ref{lem: punc}.
\end{proof}

While $\frac{1}{\sqrt{n}}$ is the best semi-minor radius we can achieve while keeping the semi-major radius equal to 1, we would hope to achieve better, since, as made clear in Remark \ref{rem: thin}, the maximal distance of our configurations from the $y$-axis is arbitrarily close to $\frac{1}{n}$. We achieve this semi-minor radius in the next theorem. However, in order to achieve this, we must allow our semi-major radius, and thus too the area of the ellipse, to approach infinity.

\begin{thm} \label{thm: ell2}
    Let $n\in \NN$, and let $E$ be an ellipse with eccentricity $e\in \left[\sqrt{1-r},1\right)$ and semi-minor radius $b=\sqrt{ \frac{(1-r)^2(1-e^2)}{e^2} + r^2}$, where $r=\frac{1}{n}+\varepsilon$ for some sufficiently small $\varepsilon>0$. Then $\Cf_{n,r}(E)$ contains a non-contractible $(n-3)$-sphere.
\end{thm}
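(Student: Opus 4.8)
The plan is to recycle the chain construction from the proof of Theorem~\ref{thm: nts}: for a small angle $\xi>0$ to be fixed later (depending on $\varepsilon$), I set $S=\partial[-\xi,\xi]^{n-2}\subset T^{n-2}$ and build $\SS\colon S\to\Cf_{n,r}(\RR^2)$ by placing $D_1,\dots,D_n$ in a touching chain with turning angles $\phi_i$, then translating and rotating so that $x_1$ and $x_n$ lie on the major axis of $E$, symmetric about the origin (exactly as in steps 1--3 of Theorem~\ref{thm: ell}). Continuity and non-overlapping follow verbatim from Theorem~\ref{thm: nts}, and $\ang\circ\SS=\mathrm{id}_S$ by construction. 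Everything then reduces to two claims: that every configuration lies in $E$, and that $\SS$ is non-contractible.

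The containment $\bigcup\SS(\Phi)\subset E$ is the heart of the matter, and here the method of Theorem~\ref{thm: ell} breaks down: since $e>\sqrt{1-r}$ forces $\tfrac{b^2}{a}<r$ (equivalently $a\to\infty$ and $b\to r$ as $e\to1$), the inner ellipse demanded by Lemma~\ref{lem: small2} would need negative semi-minor radius $\tfrac{b^2}{a}-r$, so no intermediate ellipse exists and Lemma~\ref{lem: small2} cannot be invoked. Instead I would work directly with Lemma~\ref{lem: line}. Writing $x_i=(u_i,v_i)$, the key observation is that the relation $b^2=\tfrac{(1-r)^2(1-e^2)}{e^2}+r^2$ is precisely the one making the longitudinal reach equal to $1-r$: a disc centred at $(u,0)$ lies in $E$ iff $|u|\le 1-r$. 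By Lemma~\ref{lem: fit} (applicable since every $\Phi\in S$ has some $\phi_l=\pm\xi$) each $|u_i|\le(n-1-4\sin^2\tfrac{\xi}{4})r$, and I arrange $\xi$ so that this is $\le 1-r$. For the transverse part I use that $\dist(\cdot,\partial E)$ is $1$-Lipschitz, so $\dist((u_i,v_i),\partial E)\ge\dist((u_i,0),\partial E)-|v_i|$, bounding $|v_i|$ via Remark~\ref{rem: thin}. Because the chain is pinned on the axis at both ends, the profile $\{(u_i,v_i)\}$ is itself pinched at the ends and widest in the middle, matching the shape of the $r$-erosion of $E$, which is the structural reason the sliver can be threaded through the thin ellipse at all.

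The two requirements on $\xi$ pull in opposite directions, and reconciling them is the main obstacle. The longitudinal constraint $(n-1-4\sin^2\tfrac{\xi}{4})r\le 1-r$ rearranges to $4\sin^2\tfrac{\xi}{4}\ge n-\tfrac1r=\tfrac{n^2\varepsilon}{1+n\varepsilon}$, forcing $\xi$ to be at least of order $\sqrt{\varepsilon}$; the transverse constraint needs $\max_i|v_i|$ (of order $\xi$) to stay below the available perpendicular room, whose smallest relevant value is of order $b-r$, forcing $\xi$ to be at most of order $b-r$. These are compatible precisely when $\varepsilon$ is small relative to $(b-r)^2$, which is what ``$\varepsilon$ sufficiently small'' supplies. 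I expect the delicate point of the write-up to be making the transverse estimate uniform over $S$ and over the range of $u_i$: one must verify $\dist((u_i,0),\partial E)-|v_i|\ge r$ simultaneously at centres with $|u_i|$ near $1-r$ (where the on-axis reach is nearly tight but $|v_i|$ is small) and at central centres (where $|v_i|$ is largest but the reach is $\approx b-r$).

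Finally, non-triviality follows from the same reach computation, which shows that $E$ admits no $n$ collinear discs of radius $r$: along the major axis their centres are confined to $|u|\le 1-r$, giving usable length $2(1-r)<2(n-1)r$, and any other chord of $E$ is shorter, so $n$ touching discs cannot be made collinear. Hence $\ang(\Cf_{n,r}(E))$ avoids $0\in T^{n-2}$, where $0$ corresponds to all discs lying on one line. Since $E\not\subset D^2$ I cannot cite Lemma~\ref{lem: punc} verbatim, but its proof goes through once this collinearity obstruction is verified by hand; as $[S]\neq0\in\pi_{n-3}(T^{n-2}\setminus\{0\})$, the section $\SS$ then represents a non-trivial class in $\pi_{n-3}(\Cf_{n,r}(E))$, yielding the claimed non-contractible $(n-3)$-sphere.
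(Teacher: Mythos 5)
Your skeleton matches the paper's exactly (same $S=\partial[-\xi,\xi]^{n-2}$, same chain-and-rotate section, Lemma \ref{lem: fit} for the pinned endpoints, collinearity obstruction for non-triviality), and your key observation --- that $b^2=\frac{(1-r)^2(1-e^2)}{e^2}+r^2$ is exactly the identity making the on-axis reach equal to $1-r$, which under $e\ge\sqrt{1-r}$ corresponds to $B((1-r,0),r)$ being internally tangent to $E$ --- is the same one the paper exploits. Where you diverge is the containment step. The paper does not use Lemma \ref{lem: line} or a Lipschitz estimate at all: it traps every configuration inside an explicit stadium-shaped region $F=B((-1+r,0),r)\cup([-1+r,1-r]\times[-s,s])\cup B((1-r,0),r)$ (a thickening of the critical configuration, with $s\to r$ as $\xi\to 0$) and verifies $F\subset E$ by completing the square in the ellipse inequality $x^2(1-e^2)+y^2\le b^2$. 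Your longitudinal/transverse decomposition via $\dist((u_i,v_i),\partial E)\ge\dist((u_i,0),\partial E)-|v_i|$ proves the same thing; it is slightly more delicate because you must make the estimate uniform in $u_i$ (as you correctly flag), whereas the paper's $F$ handles the end discs and the middle discs by two clean algebraic cases. Your reconciliation of the two constraints on $\xi$ (at least of order $\sqrt{\varepsilon}$, at most of order $b-r$, compatible since $e<1$ is fixed) is the right quantitative picture; the paper phrases it in the opposite direction, fixing $\xi$ and then restricting $r\le\frac{1}{n-4\sin^2(\xi/4)}$. Two minor corrections: Lemma \ref{lem: punc} does not require $U\subset D^2$, only that every disc of radius $r$ in $U$ lies in $D^2$, which is precisely what the tangency computation gives, so the paper cites it verbatim rather than reproving it; and your claim that ``any other chord of $E$ is shorter'' is not quite the statement you need (the relevant fact is that the farthest point of the $r$-erosion of $E$ from the centre lies on the major axis, so every admissible centre satisfies $|x_i|\le 1-r$) --- though the paper's own justification of this point is equally brief.
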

Under these hypotheses, $b$ takes all values in $\left(r, \sqrt{r}\right]$. The semi-major radius is $b\left(1-e^2\right)^{-\frac{1}{2}} = \sqrt{ \frac{(1-r)^2}{e^2} + \frac{r^2}{1-e^2} }$, which takes all values in $\left[1, \infty\right)$. 
\begin{proof}
    For a sufficiently small angle $\xi>0$, let $S=\partial \left[-\xi, \xi\right]^{n-2} \subset T^{n-2}$ and take $\frac{1}{n}<r \le \frac{1}{n-4\sin^2\frac{\xi}{4}}$. Take the length-preserving coordinate system in which $E$ is centred at the origin and its major axis coincides with the $x$-axis. We define a local section $\SS\colon S\to \Cf_{n,r}(E)$ to $\ang$ by constructing each $\SS(\phi_2, \ldots, \phi_{n-1})$ as follows: 
    \begin{enumerate}
    \item Place $D_1$ at the origin of the plane and $D_2$ in contact with it at $(2r, 0)$.
    \item Place each subsequent disc $D_i$ in contact with $D_{i-1}$, such that $x_i$ lies on the ray from $x_{i-1}$ at angle $\phi_{i-1}$.
    \item Translate all discs by $-\frac{x_1+x_n}{2}$.
    \item Rotate the configuration about the origin so that $x_1$ and $x_n$ lie on the $x$-axis.
\end{enumerate}

There is no overlap between discs and $\SS$ is continuous by the arguments of Thm. \ref{thm: nts}, so we simply need to show $\bigcup \SS(\Phi) \subset E$ for all $\Phi \in S$. By Remark \ref{rem: thin}, $x_i$ is arbitrarily close to $(\frac{2i-n-1}{n}, 0)$ at step 3. In particular, $x_1 \to (-\frac{n-1}{n}, 0)$ and $x_n\to (\frac{n-1}{n},0)$ as $\xi \to 0$, so the size of the rotation in step 4 tends to 0, which means this limit position holds after step 4 as well. This limit is the first critical configuration, consisting of the $n$ discs of radius $\frac{1}{n}$ lined up along the horizontal diameter of $D^2$, such that $D_i$ touches $D_{i-1}$ and $D_{i+1}$ if they exist, and $D_1$ and $D_n$ touch the boundary.

More precisely, for each $\frac{1}{n}<r \le \frac{1}{n-4\sin^2\frac{\xi}{4}}$, there is some $s>r$ (where $s\to \frac{1}{n}$ as $\xi\to 0$) such that, for $2\le i \le n-1$, $D_i \subset B\left(\left(\frac{2i-n-1}{n},0\right), s\right)$. Therefore \[\bigcup_{i=2}^{n-1} D_i \subset F_1 \coloneqq \left[-\frac{n-3}{n}-s, \frac{n-3}{n}+s\right] \times [-s,s] \quad .\] 
We have fixed the centres of $D_1$ and $D_n$ to lie on the $x$-axis, and Lemma \ref{lem: fit} gives us $|x_1|, |x_n| \le (n-1-4\sin^2\frac{\xi}{4})r \le 1-r$. Thus \[ D_1\cup D_n \subset F_2 \coloneqq B((-1+r,0), r) \cup ([-1+r,1-r] \times [-r,r]) \cup B((1-r,0),r) \quad . \]
For sufficiently small $\xi$, we have $s<\frac{3}{2n}$. Then $\frac{n-3}{n}+s < \frac{2n-3}{2n} < 1-r$, and so \[ \bigcup \SS(\Phi) \subset F_1\cup F_2 \subset F\coloneqq B((-1+r,0), r) \cup ([-1+r,1-r] \times [-s,s]) \cup B((1-r,0),r)\] for all $\Phi \in S$ (see Fig. \ref{fig: thicken}).

\begin{figure}[htbp]
    \begin{center}
        \begin{tikzpicture}
            \foreach \x in {-5,-3,...,5} 
                {\draw ({0.8*\x},0) circle[radius=1];}
            \foreach \x in {-5.5,5.5} 
                {\draw[thick] ({\x},0) circle[radius=0.9];}
            \draw[thick] (-5.5,-1) rectangle (5.5,1);
        \end{tikzpicture}
    \end{center}
        \caption{The outer boundary shows a thickening of the first critical configuration, constructed in such a way that it contains the configurations of the non-contractible sphere $\SS$, and is in turn contained in the ellipse $E$, in Theorem \ref{thm: ell2}.}
        \label{fig: thicken}
    \end{figure}

Next, we show that $F\subset E$ by showing that all points of $F$ satisfy the inequality which defines $E$, \[x^2(1-e^2)+y^2\le b^2 = \frac{(1-r)^2(1-e^2)}{e^2} + r^2 \quad .\] 

First, if $(x,y) \in [-1+r,1-r] \times [-s,s]$, then $x^2(1-e^2)+y^2 \le (1-r)^2(1-e^2)+s^2$. As $\xi \to 0$, then $s,r\to \frac{1}{n}$, and therefore $s^2-r^2 \to 0$. Since the lower bound on $e$ is always less than $\sqrt{\frac{n-1}{n}}$, we may assume $e$ to be fixed. Therefore $s^2-r^2 \le \frac{1}{4}(1-e^2)\left(\frac{1}{e^2}-1\right) \le (1-r)^2(1-e^2)\left(\frac{1}{e^2}-1\right)$ for sufficiently small $\xi$. Hence 
\begin{align*}
    x^2(1-e^2)+y^2 & \le (1-r)^2(1-e^2) + r^2 + (1-r)^2(1-e^2)\left(\frac{1}{e^2}-1\right) \\
    & = \frac{(1-r)^2(1-e^2)}{e^2} + r^2 \quad .
\end{align*}

Second, if $(x,y) \in B((1-r,0),r)$, then $(x-1+r)^2+y^2 \le r^2$. Thus 
\begin{align*}
    x^2(1-e^2)+y^2 & \le x^2(1-e^2) + r^2 - (x-1+r)^2 \\
    & = -e^2x^2+2(1-r)x+2r-1 \\
    & = -\left(ex-\frac{1-r}{e}\right)^2 + \frac{(1-r)^2}{e^2} + 2r-1 \\
    & \le \frac{(1-r)^2}{e^2} + 2r-1 \\
    & = \frac{(1-r)^2(1-e^2)}{e^2} + (1-r)^2 + 2r-1 \\
    & = \frac{(1-r)^2(1-e^2)}{e^2} + r^2 \quad .
\end{align*}
Symmetrically, we see the same for $(x,y) \in B((-1+r,0),r)$. Therefore $\bigcup \SS(\Phi) \subset F \subset E$ for all $\Phi \in S$, which completes the proof that $\SS$ is a local section of $\ang$.

Finally, let $\Delta = \{B(p,r) \colon p\in \RR^2, \ B(p,r) \subset E\}$ be the set of all discs of radius $r$ contained in $E$. We want to show that every such disc also lies in $D^2$. It is clear geometrically that $\sup \{|p| \colon B(p,r) \in\Delta \}$ is achieved on the major axis of $E$, which is the $x$-axis. We may assume the $x$-coordinate of $p$ to be positive by symmetry. Then, by checking that these coordinates satisfy the equations defining both sets, we note that $\partial B((1-r,0),r)$ intersects $\partial E$ at $p_{\pm}=\left(\frac{1-r}{e^2}, \pm \sqrt{r^2-(1-r)^2(e^{-2}-1)^2}\right)$, so the $x$-coordinate of $p$ cannot be greater than $1-r$ -- that is, $\sup \{|p| \colon B(p,r) \in\Delta \} \le 1-r$. This is equivalent to the claim. 

Now $[S]\neq 0 \in \pi_{n-3}(T^{n-2}\backslash \{0\})$, and if $\ang(\DD) = 0$, then all the disc centres are collinear. Thus $\SS$ represents a non-trivial element of $\pi_{n-3}(\Cf_{n,r}(E))$ by Lemma \ref{lem: punc}, as required.
\end{proof}

We note that there is no discontinuity in the ranges of the semi-minor and semi-major radii between Theorems \ref{thm: ell} and \ref{thm: ell2}; indeed, when $e=\sqrt{1-r}$, both settings yield the ellipse of semi-major radius 1 and semi-minor radius $\sqrt{r}$. Hence there exists a continuous family of ellipses, indexed over the semi-minor radius on the range $\left(\frac{1}{n}, 1\right]$, which contain this non-contractible sphere class.

\bibliography{library}

\end{document}